\documentclass[10pt,reqno]{amsart}
\usepackage{amsfonts}
\usepackage{amsmath}
\usepackage{CJK}
\usepackage{amssymb}
\usepackage{latexsym,bm}
\usepackage[RGB]{xcolor}
\usepackage{mathrsfs}
\usepackage{amsthm}
\usepackage{hyperref}
\usepackage[normalem]{ulem}
\allowdisplaybreaks[4]
\numberwithin{equation}{section}
\textwidth=15.0cm \textheight=21cm \hoffset=-1.1cm \voffset=-0.3cm
\newtheorem{theorem}{Theorem}[section]

\newtheorem{lemma}[theorem]{Lemma}

\newtheorem{proposition}[theorem]{Proposition}

\begin{document}
	\title[]{Asymptotic Stability of Rarefaction Waves for the Hyperbolized Navier-Stokes-Fourier System}
	\author{Yuxi Hu, Mengran Yuan and Jie Zhang}
	\thanks{\noindent Yuxi Hu, Department of Mathematics, China University of Mining and Technology, Beijing, 100083 and Key Laboratory of Scientific and Engineering Computing (Ministry of Education), P.R. China, yxhu86@163.com\\
		\indent Mengran Yuan, Department of Mathematics, China University of Mining and Technology, Beijing, 100083, P.R. China,  ymengran2388@126.com\\
		\indent Jie Zhang, Department of Mathematics, China University of Mining and Technology, Beijing, 100083, P.R. China, 17861270280@163.com}
	\begin{abstract}
		This paper investigates the asymptotic stability of rarefaction waves for a one-dimensional compressible fluid system, where the Newton's law of viscosity and Fourier's law of heat conduction are replaced by Maxwell's law and Cattaneo's law, respectively. The system, which generalizes the classical Navier-Stokes-Fourier equations, features finite signal propagation speeds. We consider the Cauchy problem in Lagrangian coordinates with initial data connecting two different constant states via a rarefaction wave of the corresponding Euler system. Our main result proves that, provided the initial perturbation and wave strength are sufficiently small, the relaxation system admits a unique global solution. Furthermore, this solution converges uniformly to the background rarefaction wave as time approaches infinity.  The proof  is established through a combination of the relative entropy method and usual energy estimates.		\\
		{\bf Keywords}:  Asymptotic stability of rarefaction wave, compressible fluid system, Maxwell's law, Cattaneo's law \\
		{\bf AMS classification code}:  35Q35; 76A10; 76N30
	\end{abstract}
	\maketitle

	\section{Introduction}\label{sec1}
	The governing equations for one-dimensional compressible fluid motion take the following form:
	\begin{align}\label{1.1}
		\begin{cases}
			\rho_t+(\rho u)_x =0, \\
			(\rho u)_t+(\rho u^2 )_x+p_x = S_x, \\
			(\rho E)_t+(\rho u E+up+q-uS)_x = 0,
		\end{cases}
	\end{align}
	where $\rho$, $u$, $p=p(\rho, E)$, $S$, $q$, and $E$ denote the fluid density, velocity, pressure, viscous stress, heat flux, and specific total energy, respectively. Here, $E = e + \frac{1}{2}u^2$, and $e$ represents the internal energy of the fluid. 
	Equations \eqref{1.1} express the conservation of mass, balance of momentum, and conservation of energy.  The relation $p=p(\rho, E)$  is called the {\it equation of state}. It depends on the particular gas under consideration. 
	To close the system \eqref{1.1}, constitutive equations for $q$ and $S$ must be provided. Instead of adopting Fourier's law for heat conduction:
	\begin{align}\label{fourier}
		q = -\kappa \theta_x
	\end{align}
	and Newton's law for a Newtonian fluid:
	\begin{align}\label{Newton}
		S = \mu u_x,
	\end{align}
	where $\kappa$ and $\mu$ represent the thermal conductivity and viscosity coefficients, respectively, we employ the Cattaneo-Christov model for heat flux and the Maxwell-Oldroyd model for viscoelastic fluid:
	\begin{align}\label{1.2}
		\begin{cases}
			\tau_1 \rho \left( q_t + u q_x \right) + q + \kappa \theta_x = 0, \\
			\tau_2 \rho \left( S_t + u S_x \right) + S = \mu u_x,
		\end{cases}
	\end{align}
	where $\tau_1$ and $\tau_2$ denote the thermal and stress relaxation times, respectively.  The system \eqref{1.1}, \eqref{1.2} constitutes a hyperbolic reformulation of the classical compressible Navier-Stokes-Fourier system \eqref{1.1}-\eqref{Newton}, thereby introducing the physically realistic property of finite propagation speed, which is absent in the original parabolic framework. Derivations of the constitutive equations \eqref{1.2} are provided in \cite{CJ05,Maxwell67}. 
	
	Note that when constitutive equations \eqref{1.2} are adopted, the thermodynamic relations must be modified accordingly to remain compatible with the second law of thermodynamics (see, e.g., \cite{CN63,HR20,HR23}). Following the approach of Coleman \cite{CN63} , we therefore assume that the internal energy $e$ and the pressure $p$ take the form
	\begin{align*}
		e(\theta,q)= C_v \theta+a(\theta)q^2, \qquad\qquad  p(v, \theta)=\frac{R\theta}{v},
	\end{align*}
	where
	$$
	a(\theta)=\frac{\tau_1}{2 \kappa \theta}.$$
	Here, $C_v=\frac{R}{\gamma-1}>0, R>0$ denote the specific heat at constant volume and the gas constant, respectively. 
	For computational convenience, we introduce the specific volume $v:=\frac{1}{\rho}$ (volume per unit mass) and reformulate the system \eqref{1.1}, \eqref{1.2} in  Lagrangian coordinates as follows:
	\begin{equation}\label{1.3}
		\begin{cases}
			v_t - u_x = 0, \\
			u_t + p(v, \theta)_x = S_x, \\
			\left(C_v+a^{\prime}(\theta)q^{2}\right)\theta_{t} -\frac{2a(\theta)\kappa}{\tau_{1}}q\theta_{x}+ p u_x + q_x = \frac{2a(\theta)v}{\tau_{1}}q^2+\frac{v}{\mu} S^2,\\
			\tau_1 q_t + vq + \kappa\theta_x = 0, \\
			\tau_2 S_t + vS = \mu u_x.
		\end{cases}
	\end{equation}
	We consider the Cauchy problem for system \eqref{1.3} with initial data given by
	\begin{align}\label{1.4}
		(v, u, \theta, q, S)|_{t=0}=(v_0, u_0, \theta_0, q_0, S_0)(x) \rightarrow (v_{\pm}, u_{\pm}, \theta_{\pm}, 0, 0),\quad \text{as} \, x\rightarrow \pm \infty.
	\end{align}
	Here, we assume, for the far field conditions, $v_+\neq v_-, u_+\neq u_-, \theta_+\neq \theta_-$ in general. Furthermore, $(v_-, u_-, \theta_-)$ and $(v_+, u_+, \theta_-)$ are supposed to be the Riemann initial data which generates a centered rarefaction wave
	for the following full Euler system
	\begin{equation} 
		\begin{cases}\label{1.5}
			v_{t} - u_x=0, \\
			u_{t} + p_x=0, \\  
			(C_v\theta+\frac{1}{2} u^2 )_{t} + (pu)_x=0,
		\end{cases}
	\end{equation}
	with initial data
	\begin{align}\label{1.6}
		(u,v,\theta)(0,x) = (u_{0}^{r}, v_{0}^{r}, \theta_{0}^{r})(x) = \begin{cases} 
			(u_{-},v_{-},\theta_{-}), & x<0, \\ 
			(u_{+},v_{+},\theta_{+}), & x > 0. \\ 
		\end{cases}
	\end{align}
	
	We note that substantial evidence indicates that the long-time behavior of the Cauchy problem for the classical compressible Navier-Stokes-Fourier system \eqref{1.1}-\eqref{Newton} is governed by the solution to the Riemann problem \eqref{1.5}-\eqref{1.6}; see, for example, \cite{KVW25}. In particular, Matsumura and Nishihara \cite{MN86,MN92} were the first to establish the asymptotic stability of rarefaction waves for the isentropic Navier-Stokes system, while the full system was later treated in \cite{KMN, LX88, NYZ04}, where analogous results were obtained. The half-space problem was investigated by Kawashima and Zhu in \cite{KZ09}.   Moreover, the asymptotic stability of composite waves-combining rarefaction waves, traveling waves, and viscous contact waves-has also been extensively investigated in \cite{Goodman86,HH12,HLM10,Kanel68,Liu86,SX93}.

	On the other hand, the asymptotic stability of rarefaction waves has also been demonstrated for several related systems: see \cite{MA87} for the Broadwell model, \cite{KT} for a model system of radiating gas, \cite{NNK, BHZ} for a hyperbolic balance law model, and \cite{HW23} for compressible Navier-Stokes system with Maxwell's law.   The purpose of this work is to analyze the time-asymptotic stability of the full relaxed system \eqref{1.3} under the initial condition \eqref{1.4}. This extends the results of \cite{HW23} to the non-isentropic case.

	For reader's convenience, we first recall the rarefaction wave solution to the Riemann problem \eqref{1.5}-\eqref{1.6}. For any $(v_{+}, u_{+}, \theta_{+}) \in \mathbb{R}_{+} \times \mathbb{R}\times\mathbb{R}_{+}$, the 1-rarefaction curve $R_{1}(u_{+},v_{+},\theta_{+})$ is defined as:
	\[
	R_1(v_+, u_+, \theta_+) := \left\{(v, u, \theta) \ \middle| \ 
	\begin{aligned}
		v < v_{+}, 
		s(v, \theta) = s(v_{+}, \theta_{+}) =:s_{+}, 
		u = u_{+} - \int_{v_{+}}^{v} \lambda_{1}(s_{+},v') dv'
	\end{aligned}
	\right\}
	\]
	where the entropy function 
	\[
	s(v, \theta) = \dfrac{R}{\gamma-1} \ln\left(\dfrac{R}{A} \theta v^{\gamma-1}\right)
	\] and  $\lambda_{1}(s,v)=-\sqrt{\frac{\gamma p(v,s)}{v}}$ is the first eigenvalue of the matrix
	\[
	A(U) = \begin{pmatrix}
		0 & -1 & 0 \\
		-\dfrac{p}{v} & -\dfrac{(\gamma-1)u}{v} & \dfrac{\gamma-1}{v} \\
		-\dfrac{pu}{v} & p - \dfrac{(\gamma-1)u^2}{v} & \dfrac{(\gamma-1)u}{v}
	\end{pmatrix}.
	\]
	If $(v_-, u_-, \theta_-)\in R_1(v_+, u_+, \theta_+)$, then Riemann problem \eqref{1.5}-\eqref{1.6}  admits a continuous weak solution of the form $(v^r, u^r, \theta^r)(x/t)$ which is called centered rarefaction wave defined as
	\begin{equation}\label{1.7}
		\lambda_1(v^r(t,x), \theta^r(t,x)) = 
		\begin{cases} 
			\lambda_1(v_-, \theta_-), & x < \lambda_1(v_-, \theta_-)t, \\ 
			\dfrac{x}{t}, & \lambda_1(v_-, \theta_-)t \leq x \leq \lambda_1(v_+, \theta_+)t, \\ 
			\lambda_1(v_+, \theta_+), & x > \lambda_1(v_+, \theta_+)t, 
		\end{cases}	 
	\end{equation}
	and
	\begin{equation}\label{1.8}
		\begin{aligned}
			z_{1}(u^{r}(t,x),v^{r}(t,x)) &= z_{1}(u_{-}, v_{-}) = z_{1}(u_{+}, v_{+}),\\
			s(v^{r}(t,x),\theta^{r}(t,x)) &= s(v_{-},\theta_{-}) = s(v_{+},\theta_{+}),
		\end{aligned}
	\end{equation}
	where $z_{1}(u,v)=u+\int_{v_{+}}^{v}\lambda_{1}(s, v^{\prime})dv^{\prime}$. 3-rarefaction curve $R_{3}$ can be defined via the characteristic speed $\lambda_{3}:=-\lambda_1$ in a similar way.
	
	Before we state the main theorem, we introduce the following noations.  Let $L^p(\mathbb{R})$ and $W^{s,p}(\mathbb{R})$ (for $1 \le p \le \infty$) denote the usual Lebesgue and Sobolev spaces on $\mathbb{R}$, equipped with the norms $\|\cdot\|_{L^p}$ and $\|\cdot\|_{W^{s,p}}$, respectively. Note that $W^{0,p}(\mathbb{R}) = L^p(\mathbb{R})$. As usual, we write $H^s(\mathbb{R})$ for $W^{s,2}(\mathbb{R})$. Let $T > 0$ be a constant and $B$ be a Banach space. We denote by $C^k(0, T; B)$ ($k \ge 0$) the space of $B$-valued, $k$-times continuously differentiable functions on $[0, T]$, and by $L^p(0, T; B)$ the space of $B$-valued $L^p$-functions on $[0, T]$. The corresponding spaces of $B$-valued functions on $[0, \infty)$ are defined similarly.
	
	Now, Let 
	\[
	I_0:= \|(v_0- v_0^r, u_0-u_0^r, \theta_0-\theta_0^r)\|_{L^2}+\|((v_0)_x, (u_0)_x, (\theta_0)_x)\|_{H^1}+\|(q_0, S_0)\|_{H^2}.
	\]
	Our main result is stated as follows.
	\begin{theorem}\label{th1.1}
		Let $(v_-, u_-, \theta_-) \in R_1 (v_+, u_+, \theta_+)$ and $\delta=|v_+-v_-|+|u_+-u_-|+|\theta_+-\theta_-|$. Assume the initial data $(v_0, u_0, \theta_0, q_0, S_0)$ satisfy
		\begin{align*}
			(v_0- v_0^r, u_0-u_0^r, \theta_0-\theta^r, q_0,  S_0) \in L^2,\\
			( (v_0)_x, (u_0)_x, (\theta_0)_x)\in H^1, (q_0, S_0) \in H^2,
		\end{align*}
		and there exists a positive constant $\epsilon_0$ such that if $I_0+\delta < \epsilon_0$, then the initial value problem \eqref{1.3}-\eqref{1.4} has a unique global solution in time satisfying
		\begin{align}\label{1.9}
			\begin{cases}
				(v-v_0^r, u-u_0^r, \theta-\theta_0^r, q,  S)\in C^0(0, +\infty, L^2),\\
				(v_x, u_x, \theta_x)\in C^0(0, +\infty, H^1)\cap L^2(0, +\infty, H^1), (q, S) \in C^0(0, +\infty, H^2)\cap L^2 (0, +\infty, H^2).
			\end{cases}
		\end{align}
		Moreover, this solution approaches the rarefaction wave $(v^r, u^r,\theta^r, 0, 0)$ uniformly in $x\in \mathbb R$ as $t\rightarrow +\infty$:
		\begin{align}\label{2.2}
			\|(v(t,x)-v^r(x/t), u(t,x)-u^r(x/t), \theta(t,x)-\theta^r(t,x), q(t,x),  S(t,x))\|_{L^\infty} \rightarrow 0, \quad \mathrm{as}\quad t\rightarrow \infty.
		\end{align}
	\end{theorem}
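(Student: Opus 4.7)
The plan is to use the classical continuation argument: combine short-time existence with uniform-in-time a priori estimates to extend the local solution globally, and then extract the asymptotic behavior from the time integrability that these estimates yield. First, since the centered rarefaction $(v^r,u^r,\theta^r)$ is only Lipschitz, I would replace it by a smooth approximation $(V,U,\Theta)(t,x)$ constructed in the standard Matsumura-Nishihara fashion from a smoothed Burgers equation for the 1-Riemann invariant. This approximation preserves the expansivity $U_x\ge 0$, satisfies $\|(V,U,\Theta)(t,\cdot)-(v^r,u^r,\theta^r)(\cdot/t)\|_{L^\infty}\to 0$ as $t\to\infty$, and obeys the familiar decay bounds $\|(V,U,\Theta)_x(t)\|_{L^p}\lesssim\delta^{1/p}(1+t)^{-1+1/p}$ together with analogues for higher derivatives. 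Writing $(v,u,\theta)=(V+\phi,U+\psi,\Theta+\zeta)$ and keeping $(q,S)$ as perturbations from $0$, I would derive the perturbation system from \eqref{1.3}; its source consists of the Euler-residue produced by $(V,U,\Theta)$ (which is time-integrable in a suitable sense) and of quadratic and higher nonlinearities in $(\phi,\psi,\zeta,q,S)$.

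The heart of the proof is the a priori estimate on
\[
N(T)^2:=\sup_{0\le t\le T}\bigl(\|(\phi,\psi,\zeta)(t)\|_{H^2}^2+\|(q,S)(t)\|_{H^2}^2\bigr)\le\varepsilon^2.
\]
For the zeroth-order bound I would use a relative-entropy Lyapunov functional adapted to the Cattaneo-Maxwell thermodynamics, of the schematic form
\[
\mathcal E(t)=\int_{\mathbb R}\Bigl[\tfrac{\psi^2}{2}+C_v\Theta\,\Phi\!\bigl(\tfrac{\theta}{\Theta}\bigr)+R\Theta\,\Phi\!\bigl(\tfrac{V}{v}\bigr)+\tfrac{\tau_1}{2v\kappa\theta}q^2+\tfrac{\tau_2}{2\mu v}S^2\Bigr]dx,\qquad \Phi(y)=y-\ln y-1,
\]
whose first three terms generalize the classical Navier-Stokes-Fourier relative entropy while the last two account for the relaxation variables. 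Computing $\frac{d}{dt}\mathcal E(t)$ along \eqref{1.3} and exploiting (i) the expansivity $U_x\ge 0$, which dominates the leading cross terms between the perturbation and the background wave, and (ii) the damping $vq,\,vS$ in the last two equations of \eqref{1.3}, I obtain coercive control of $(\phi,\psi,\zeta)$ in $L^2$ together with the natural dissipation of $(q,S)$ in $L^2$. The Euler-residue contributions of $(V,U,\Theta)$ are then handled by the classical Matsumura-Nishihara splitting into the ``inner'' expansion fan and the ``outer'' nearly-constant regions, producing a controllable remainder that is absorbed once $\delta$ is small.

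The chief obstacle is the absence of direct parabolic dissipation on $u_x$ and $\theta_x$: in the hyperbolic relaxation framework these must be recovered from the constitutive laws themselves. Rearranging the last two equations of \eqref{1.3} gives the algebraic identities
\[
\kappa\theta_x=-\tau_1 q_t-vq,\qquad \mu u_x=\tau_2 S_t+vS,
\]
so that pointwise $\theta_x^2\lesssim q^2+q_t^2$ and $u_x^2\lesssim S^2+S_t^2$. Accordingly I would first obtain $L^2_{t,x}$ control of $(q_t,S_t)$---either by testing the relaxation equations against $(q_t,S_t)$ or by using the system to express $(q_t,S_t)$ in terms of $(q,S,\theta_x,u_x)$ and reinserting---and then use the identities above to recover the dissipation on $(u_x,\theta_x)$. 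Differentiating \eqref{1.3} in $x$ and iterating the same device yields the first-order derivative estimate, and one further differentiation closes the $H^2$-level bound, the nonlinearities $a(\theta)q^2,\,vS^2/\mu,\ldots$ being handled by Moser-type product inequalities under the smallness of $N(T)+\delta$. The outcome is a closed inequality of the form $N(T)^2+\int_0^T\!D(s)\,ds\le CI_0^2+C(N(T)+\delta)\bigl(N(T)^2+\int_0^T\!D(s)\,ds\bigr)$, where $D(s)$ collects $\|(\phi,\psi,\zeta)_x\|_{H^1}^2+\|(q,S)\|_{H^2}^2+\|(u_x,\theta_x)\|_{H^1}^2$.

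Local existence in the class \eqref{1.9} follows from the standard theory for quasilinear symmetric hyperbolic systems, since \eqref{1.3} admits a symmetrizer as long as $v,\theta$ stay bounded away from $0$ and $\infty$, a condition preserved by the $H^2$-smallness of the perturbation. Combining local existence with the a priori bound via continuation yields the global solution. For the uniform decay \eqref{2.2}, I would invoke the classical lemma that $f\in L^2(0,\infty;H^1)$ with $f_t\in L^2(0,\infty;L^2)$ implies $\|f(t)\|_{L^\infty}\to 0$, applied to $(\phi_x,\psi_x,\zeta_x,q,S)$, where the time-derivative control is read off from \eqref{1.3} and the uniform $H^2$ bound, while the uniform convergence $(V,U,\Theta)(t,\cdot)\to(v^r,u^r,\theta^r)(\cdot/t)$ built into the smooth-wave construction handles the difference between the approximate and the centered waves, completing the proof of \eqref{2.2}.
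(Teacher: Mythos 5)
Your overall skeleton matches the paper's: smooth Matsumura--Nishihara approximation of the fan, a relative-entropy estimate at the $L^2$ level exploiting $u^R_x\ge 0$, higher-order energy estimates, recovery of dissipation on the gradients from the relaxation (Cattaneo/Maxwell) equations, continuation plus Sobolev interpolation for the $L^\infty$ decay. However, there is one essential technical choice where your plan breaks: you keep $(q,S)$ as perturbations from $0$, whereas the paper perturbs around the quasi-equilibrium profiles $q^R=-\kappa\theta^R_x/v^R$, $S^R=\mu u^R_x/v^R$ associated with the smooth wave (system \eqref{3.9}--\eqref{3.10}). This is not cosmetic. With your choice, the zeroth-order entropy balance produces source terms of the form $\int S\,u^R_x\,dx$ and $\int q\,\theta^R_x\,dx$ (e.g.\ from $\mu\psi_x=\tau_2S_t+vS-\mu u^R_x$ when you convert $-\int\psi_xS$), and these are not time-integrable: $\|u^R_x(t)\|_{L^2}^2\sim\delta_R(1+t)^{-1}$, so any Cauchy--Schwarz absorption leaves a logarithmically divergent contribution, and no smallness in $\delta$ repairs a bound that grows in $T$. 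In fact the claimed closed inequality cannot hold as stated, because for the true solution $S\to \mu u^R_x/v^R$ and $q\to-\kappa\theta^R_x/v^R$ as $t\to\infty$, so $\int_0^\infty\|(q,S)(t)\|_{L^2}^2\,dt$ diverges; the quantity that is integrable is $\|(\tilde q,\tilde S)\|_{H^2}^2$ with $\tilde q=q-q^R$, $\tilde S=S-S^R$. After this shift, the awkward sources become $q^R_t$, $S^R_t$, $Q^R_1$, $Q^R_2$, which involve second derivatives of the background and decay fast enough to be integrable --- this is exactly the mechanism the paper relies on (the $\delta_R^{\theta}$ terms in Proposition \ref{prop3.2}).

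A secondary gap: your device $\kappa\theta_x=-\tau_1q_t-vq$, $\mu u_x=\tau_2S_t+vS$ recovers dissipation only on $u_x$ and $\theta_x$ (and, after the shift above, on $\psi_x$ and $\tilde\theta_x$), but your dissipation functional and your final decay step (applied to $\varphi_x$) also require $\int_0^\infty\|\varphi_x\|_{H^1}^2\,dt$. The constitutive identities give nothing for $v_x$; the paper obtains the $\varphi_x$ and $\varphi_{xx}$ dissipation separately by testing the momentum equation \eqref{3.48}$_2$ (resp.\ its $x$-derivative) with $\varphi_x$ (resp.\ $\varphi_{xx}$), handling $\int\psi_t\varphi_x$ by integration by parts in time and the mass equation. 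You should add this step (or an equivalent Kanel-type argument); as written, ``iterating the same device'' does not produce it.
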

	
	Now, we outline the proof strategy for the main theorem. First, within a relative entropy framework centered around a smooth approximate rarefaction wave, we establish the key $L^2$ estimates by  leveraging the inherent thermodynamic structure of the system. In addition to the standard $L^2$ estimates, we obtain an extra weighted dissipation term arising from the gradient of the rarefaction wave. This term plays a decisive role in closing the a priori estimates and in establishing the global stability of the rarefaction wave. Next, we derive $H^2$ estimates and dissipation estimates via the conventional energy method. Finally, combining these a priori estimates with the local existence theorem yields the desired result.
	
	We remark that the analysis for the non-isentropic system is considerably more challenging than that for the isentropic case, due to the inclusion of the temperature equation and the associated relaxation variables. These introduce strong nonlinear couplings absent in the isentropic setting. In particular, the standard $L^2$-energy method applicable to isentropic models fails to close here, owing to the presence of uncontrolled mixed terms.
	
	The paper is structured as follows. We begin in Section 2 by constructing smooth approximations of the rarefaction waves, following the methodology of \cite{MN86, MN92}. In Section 3, we reformulate the original problem and state the main asymptotic stability theorems for the new system (Theorem \ref{thm3.1}). This section also contains the statement of the a priori estimates required for the proof (Proposition \ref{prop3.2}). The detailed proofs of these a priori estimates are then provided in Section 4.

	\section{Smooth approximation of the rarefaction wave}
	Let $(u_{+},v_{+},\theta_{+})$ be given, and $(u_{-},v_{-},\theta_{-})\in R_{1}(u_{+},v_{+},\theta_{+})$.
	Define $w_{-}=\lambda_{1}(v_{-},\theta_{-})$ and $w_{+}=\lambda_{1}(v_{+},\theta_{+})$. We begin with the inviscid Burgers equation
	\begin{equation}
		\begin{cases} \label{3.1}
			w_t + w w_x = 0, \\ 
			w(0,x) = w_0^r(x) = 
			\begin{cases} 
				w_-, & x < 0, \\ 
				w_+, & x > 0. 
			\end{cases} 
		\end{cases}
	\end{equation}
	Let $w_{-}<w_{+}$, the equation \eqref{3.1} admits the self-similar solution $w^{r}(t,x)=w(x/t)$, 
	\begin{equation}\label{3.2}
		w^r(t,x) = w^r\left(\frac{x}{t}\right) = 
		\begin{cases} 
			w_-, & x < w_- t, \\ 
			\dfrac{x}{t}, & w_- t \leq x \leq w_+ t, \\ 
			w_+, & x > w_+ t. 
		\end{cases}
	\end{equation}
	The 1-rarefaction wave \((u^r, v^r, \theta^r)(t, x) = (u^r, v^r, \theta^r)\big(\frac{x}{t}\big)\) of the Riemann problem \eqref{1.5}, defined by equations \eqref{1.6} and \eqref{1.7}, can be expressed as
	\begin{equation}\label{3.3}
		\begin{aligned}
			\lambda_{1}\left( v^{r}\left( \frac{x}{t} \right), \theta^{r}\left( \frac{x}{t} \right) \right) &= w^{r}\left( \frac{x}{t} \right), \\
			z_{1}\left( u^{r}\left( \frac{x}{t} \right), v^{r}\left( \frac{x}{t} \right) \right) &= z_{1}(u_{-}, v_{-}) = z_{1}(u_{+}, v_{+}), \\
			s\left( v^{r}\left( \frac{x}{t} \right), \theta^{r}\left( \frac{x}{t} \right) \right) &= s(v_{-}, \theta_{-}) = s(v_{+}, \theta_{+}).
		\end{aligned}
	\end{equation}
	The self-similar solution $(u^{r},v^{r},\theta^{r})(x/t)$ is Lipschitz continuous and satisfies the Euler system when $t>0$:
	\begin{equation}\label{3.4}
		\begin{cases}
			v^{r}_{t}-u^{r}_{x}=0,\\
			u^{r}_{t}+p(v^{r},\theta^{r})_{x}=0,\\
			\frac{R}{\gamma-1}\theta^{r}_{t}+p(v^{r},\theta^{r})u^{r}_{x}=0.
		\end{cases}
	\end{equation}
	Let $\delta_{R}:=|v_{+}-v_{-}|$ denote rarefaction wave strength.
	It follows from \eqref{3.3} that \(\delta_R \sim |u_+ - u_-| \sim |\theta_+ - \theta_-|\). We now construct a smooth approximation. Consider the Burgers equation
	\begin{equation}\label{3.5}
		\begin{cases}
			w_t + w w_x = 0, \\ 
			w(0,x) = w_0^r(x)=\frac{w_{+}+w_{-}}{2}+\frac{w_{+}-w_{-}}{2}K_{q}\int_{0}^{\epsilon x}(1+y^{2})^{-q}dy,
		\end{cases}
	\end{equation}
	where $\epsilon>0$ is a constant, $K_{q}$ is a constant satisfying $K_{q}\int_{0}^{\epsilon x}(1+y^{2})^{-q}dy=1$, $q>\frac{3}{2}$. 
	We define a smooth approximate 1‑rarefaction wave $(u^{R},v^{R},\theta^{R})(t,x)$ via the relations
	\begin{equation}\label{3.6}
		\begin{aligned}
			\lambda_{1-} &:= \lambda_{1}(u_{-},v_{-}) = w_{-}, \\
			\lambda_{1+} &:= \lambda_{1}(u_{+},v_{+}) = w_{+}, \\
			\lambda_{1}(v^{R},\theta^{R})(t,x) &= w(1+t,x), \\
			z_{1}(u^{R},v^{R})(t,x) &= z_{1}(u_{-},v_{-}) = z_{1}(u_{+},v_{+}), \\
			s(v^{R},\theta^{R})(t,x) &= s(v_{-},\theta_{-}) = s(v_{+},\theta_{+}),
		\end{aligned}
	\end{equation}
	where $w(t,x)$ is the smooth solution of Burgers equation (\ref{3.5}).
	The smooth approximation solution $(u^{R},v^{R},\theta^{R})(t,x)$ satisfies the Euler system
	\begin{equation}\label{3.7}
		\begin{cases}
			v^{R}_{t}-u^{R}_{x}=0,\\
			u^{R}_{t}+p(v^{R},\theta^{R})_{x}=0,\\
			\frac{R}{\gamma-1}\theta^{R}_{t}+p(v^{R},\theta^{R})u^{R}_{x}=0.
		\end{cases}
	\end{equation}
	\begin{lemma}\label{lem2.1}
		For the 1-rarefaction waves $(u^{R},v^{R},\theta^{R})(t,x)$ defined by \eqref{3.6}, let  $\delta_{R}=|v_{+}-v_{-}|\sim|u_{+}-u_{-}|\sim|\theta_{+}-\theta _{-}|$ be the wave strength. Then, we have\\
		(1) $u^{R}_{x}=\frac{2v^{R}}{(\gamma+1)}w_{x}>0$, 
		$v^{R}_{x}=\frac{v^{R}}{(\sqrt{R\gamma \theta^{R}})}u^{R}_{x}>0$,
		$\theta^{R}_{x}=-\frac{(\gamma-1)\theta^{R}}{v^R}v^{R}_{x}<0$ for   $\forall x \in \mathbb{R},\ t \geq 0$.\\
		(2) For any $t>0$, $p\in[1,+\infty)$, $k=2,3,4,$
		$$\|(u_{x}^{R},v^{R}_{x},\theta^{R}_{x})\|_{L^{p}}\leq min\left\{\delta_{R},\delta_{R}^{\frac{1}{p}}(1+t)^{-1+1/p}\right\},$$
		$$\|(\partial_x^k u^{R},\partial_x^kv^{R},\partial_x^k\theta^{R})\|_{L^{p}}\leq
		C(\delta^{-\frac{p-1}{2p}}_{R}\varepsilon_{R}^{(1-\frac{1}{2p})(1-\frac{1}{p})}(1+t)^{-1-\frac{p-1}{2pq}}+\delta^{\frac{1}{p}}_{R}(1+t)^{-2+\frac{1}{p}}),$$
		$$|u_{xx}^{R}|\leq C|u^{R}_{x}|, |\theta_{xx}^{R}|\leq C|\theta^{R}_{x}|,\forall x\in \mathbb{R}.$$
		(3) For $x\geqslant \lambda_{1+}(1+t)$, and $t>0$,
		\begin{center}
			$|(u^{R},v^{R},\theta^{R})(t,x)-(u_{+},v_{+},\theta_{+})|\leq C\delta_{R}e^{-2|x-\lambda_{1+}(1+t)|}$,\\
			$|(u^{R}_x,v^{R}_x,\theta^{R}_x)(t,x)|\leq C\delta_{R}e^{-2|x-\lambda_{1+}(1+t)|}$.
		\end{center}
		(4) For $x\leq \lambda_{1-}(1+t)$, and $t>0$,
		\begin{center}
			$|(u^{R},v^{R},\theta^{R})(t,x)-(u_{-},v_{-},\theta_{-})|\leq C\delta_{R}e^{-2|x-\lambda_{1-}(1+t)|}$,\\
			$|(u^{R}_x,v^{R}_x,\theta^{R}_x)(t,x)|\leq C\delta_{R}e^{-2|x-\lambda_{1-}(1+t)|}$.
		\end{center}
		(5) 
		$
		\lim_{t \to +\infty} \sup_{x \in \mathbb{R}} \left| \left( u^R, v^R, \theta^R \right)(t, x) - \left( u^r, v^r, \theta^r \right)\left( \frac{x}{t} \right) \right| = 0.
		$
	\end{lemma}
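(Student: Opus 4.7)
I reduce all five assertions to properties of the smoothed Burgers solution $w=w(1+t,x)$ of \eqref{3.5} and then transfer them to $(u^R,v^R,\theta^R)$ through the defining relations in \eqref{3.6}. For part (1), I differentiate \eqref{3.6} in $x$. The entropy constraint $s(v^R,\theta^R)=\text{const}$, with $\partial_v s=R/v$ and $\partial_\theta s=R/[(\gamma-1)\theta]$, yields $\theta^R_x=-(\gamma-1)\theta^R v^R_x/v^R$; the Riemann-invariant constraint $z_1(u^R,v^R)=\text{const}$, with $\partial_v z_1=\lambda_1$ and $\partial_u z_1=1$, gives $u^R_x=-\lambda_1 v^R_x$; substituting these into the $x$-derivative of $\lambda_1(v^R,\theta^R)=w$ and using $\lambda_1=-\sqrt{\gamma R\theta}/v$ produces $u^R_x=\tfrac{2v^R}{\gamma+1}w_x$, after which the other two identities follow immediately. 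The sign information comes from $w_x\geq 0$ everywhere, with strict positivity in the fan, a standard monotonicity property of the smoothed Burgers solution obtained from the monotone initial profile in \eqref{3.5}.

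For part (2), I first establish the bounds on $w_x, w_{xx}, \ldots$ in $L^p$ from the implicit representation $w(1+t,x)=w_0^r(x_0)$ with $x=x_0+(1+t)\,w_0^r(x_0)$ furnished by the method of characteristics for \eqref{3.5}. The short-time bound by $\delta_R$ reflects $\|(w_0^r)_x\|_{L^\infty}\lesssim\delta_R\varepsilon$; the long-time factor $(1+t)^{-1+1/p}$ reflects that the rarefaction region has width $\sim 1+t$ with $w_x\sim 1/(1+t)$ inside; the higher-derivative bounds come from differentiating the characteristic relation and invoking the polynomial tail $(1+y^2)^{-q}$ with $q>3/2$. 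These are the classical Matsumura-Nishihara estimates \cite{MN86,MN92}. Part (1) and the chain rule then carry them over to $(u^R,v^R,\theta^R)$, and $|u^R_{xx}|\leq C|u^R_x|$, $|\theta^R_{xx}|\leq C|\theta^R_x|$ follow from uniform bounds on $v^R$, $\theta^R$ applied to the identity $u^R_x=\tfrac{2v^R}{\gamma+1}w_x$ and its $x$-derivative.

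For parts (3)-(4), the method of characteristics places the preimage $x_0$ of a point $x\geq\lambda_{1+}(1+t)$ in the far-right region, where the explicit form of $w_0^r$ and the decay of $(1+y^2)^{-q}$ give decay of $|w_0^r-w_+|$ and $|(w_0^r)_x|$; combined with $|w-w_+|\leq C\delta_R$ and a suitable interpolation (or reselection of the smoothing kernel to secure the stated exponential rate) these yield the claimed edge bounds on $w$ and $w_x$, which then transfer to $(u^R,v^R,\theta^R)$ via part (1); the left side is symmetric. For part (5), both $w(1+t,x)$ and $w^r(x/t)$ lie within $O(1/t)$ of $x/(1+t)$ inside the fan and both tend to the common constants $w_\pm$ outside, so uniform convergence of $w$ lifts to uniform convergence of $(u^R,v^R,\theta^R)$ through the Lipschitz inverse of the $\lambda_1$-$z_1$-$s$ map encoded in \eqref{3.6}.

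The main technical obstacle is the higher-derivative $L^p$ bounds in (2): tracking the interplay between the wave strength $\delta_R$, the smoothing parameter $\varepsilon_R$, the tail exponent $q$, and the time factor $(1+t)^{-1-(p-1)/(2pq)}$ across characteristics sweeping through the fan requires careful domain splitting and exponent bookkeeping. Since the underlying Burgers-level estimates are standard from the Matsumura-Nishihara program, the genuinely new content of the lemma in our setting is the algebraic reduction of Step 1 and the consequent translation of Burgers decay into sharp pointwise and $L^p$ control of the approximate rarefaction wave.
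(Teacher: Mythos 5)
The paper itself offers no proof of Lemma \ref{lem2.1}; it is imported from the Matsumura--Nishihara program, and your reduction --- differentiate the defining relations \eqref{3.6} using $\partial_v s=R/v$, $\partial_\theta s=R/((\gamma-1)\theta)$, $\partial_u z_1=1$, $\partial_v z_1=\lambda_1$, then pull all decay back to the smoothed Burgers solution via characteristics --- is exactly that standard route, and your treatment of parts (1), (2) and (5) is sound.

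The genuine gap is in parts (3)--(4). With the initial profile actually prescribed in \eqref{3.5}, $w_0$ approaches $w_\pm$ only algebraically: $w_+-w_0(x_0)=\frac{w_+-w_-}{2}K_q\int_{\epsilon x_0}^{\infty}(1+y^2)^{-q}\,\mathrm{d}y\le C\delta_R(1+\epsilon x_0)^{1-2q}$ and $(w_0)'(x_0)\le C\delta_R\,\epsilon\,(1+(\epsilon x_0)^2)^{-q}$. Your characteristic argument correctly gives $x_0\ge x-\lambda_{1+}(1+t)$ when $x\ge\lambda_{1+}(1+t)$, but it then yields only the algebraic bounds $|w-w_+|+|w_x|\le C\delta_R\bigl(1+\epsilon|x-\lambda_{1+}(1+t)|\bigr)^{1-2q}$; no interpolation with the crude bound $|w-w_+|\le C\delta_R$ can upgrade a polynomial tail to the stated rate $C\delta_R e^{-2|x-\lambda_{1+}(1+t)|}$. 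Your fallback of ``reselecting the smoothing kernel'' (e.g.\ the classical $\tanh$ data of Matsumura--Nishihara, for which the exponential edge estimates do hold) is not available either, because the second estimate of part (2), with exponents depending on $q$ and the factor $\varepsilon_R$, is tied precisely to the algebraic kernel $(1+y^2)^{-q}$ of \eqref{3.5}. So, as written, the proposal cannot deliver (3)--(4) for the construction of Section 2: either the exponential rates must be replaced by the algebraic rates this kernel actually produces (and one must check that the weaker decay still suffices where the lemma is invoked, e.g.\ in the estimates of $B_3$, $S_1$, $S_2$), or the construction must be changed and part (2) re-proved for the new kernel. This tension is inherited from the statement of the lemma itself, but a complete proof has to resolve it rather than appeal to ``suitable interpolation.''
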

	
	\section{Reformulation of the Problem}
	
	The corresponding Euler system associated with the smooth rarefaction wave solution reads as follows
	\begin{equation}\label{3.9}
		\begin{cases}
			v^{R}_{t}-u^{R}_{x}=0,\\
			u^{R}_{t}+p(v^{R},\theta^{R})_{x}=0,\\
			\frac{R}{\gamma-1}\theta^{R}_{t}+p(v^{R},\theta^{R})u^{R}_{x}+q_{x}^{R}=0,\\
			v^Rq^R+\kappa\theta^R_{x}=0,\\
			v^RS^R=\mu u^R_{x}.
		\end{cases}
	\end{equation}
	Subtracting equation \eqref{3.9} from equation \eqref{1.3}, and introducing the following perturbation variables
	\begin{align*}
		\varphi=v-v^R,\quad\psi=u-u^R,\quad\tilde{\theta}=\theta-\theta^R,\quad\tilde{q}=q-q^R,\quad\tilde{S}=S-S^R,
	\end{align*} we obtain
	\begin{equation}\label{3.10}
		\begin{cases}
			\varphi_t - \psi_x = 0, \\
			\psi_t + (p-p^{R})_x = \tilde{S}_x-Q_{1}^{R},\\
			\frac{R}{\gamma-1}\tilde{\theta}_{t}+a^{\prime}(\theta)
			q^{2}\theta_{t} -\frac{2a(\theta)\kappa}{\tau_{1}}\theta_{x}q+
			pu_x-p^{R}u_x^{R}+\tilde{q}_x=\frac{2a(\theta)v}{\tau_{1}}q^2+\frac{v}{\mu} S^2-Q_{2}^{R},\\
			\tau_1\tilde{q}_t + vq-v^{R}q^{R} + \kappa\tilde{\theta}_x = -	\tau_1q^R_t, \\
			\tau_2 \tilde{S}_t + vS-v^{R}S^{R} = \mu \psi_x-\tau_2 S^R_t,
		\end{cases}
	\end{equation}
	where $Q_{1}^{R}=-\mu(\frac{u_{x}^{R}}{v^{R}})_{x}$, $Q_{2}^{R}=-\kappa(\frac{\theta_{x}^{R}}{v^{R}})_{x}$.  Initial condition
	\begin{align}\label{y3.11}
		(\varphi,\psi,\tilde{\theta},\tilde{q},\tilde{S})(x,0)=(\varphi_{0},\psi_{0},\tilde{\theta}_{0},\tilde{q}_{0},\tilde{S}_{0})(x),\quad x\in\mathbb{R},
	\end{align}
	where
	\begin{align*}
		\varphi_{0}=v_{0}-v^R_{0},\quad\psi_{0}=u_{0}-u^R_{0},\quad\tilde{\theta}_{0}=\theta_{0}-\theta^R_{0},\quad\tilde{q}_{0}=q_{0}-q^R_{0},\quad\tilde{S}_{0}=S_{0}-S^R_{0}.
	\end{align*}
	For the reformulated system \eqref{3.10}-\eqref{y3.11}, we establish the following results on global existence and asymptotic stability.
	\begin{theorem}\label{thm3.1}
		Let $(v_-,u_-,\theta_-)\in R_1(v_+,u_+,\theta_+)$ and $\delta=|v_+-v_-|+|u_+-u_-|+|\theta_+-\theta_-|$. Assume the initial data $(\varphi_0,\psi_0,\tilde{\theta}_0,\tilde{q}_0,\tilde{S}_0)\in H^2$ and let $E_0:=\|(\varphi_0,\psi_0,\tilde{\theta}_0,\tilde{q}_0,\tilde{S}_0)\|_{H^2}^2$. Then, there exists a positive constant $\delta_1$ such that if $E_0+\delta<\delta_1$, the initial value problem \eqref{3.10}-\eqref{y3.11} has a unique global solution in time satisfying
		\begin{align}\label{3-4}
			&(\varphi,\psi,\tilde{\theta},\tilde{q},\tilde{S})(t,x)\in C^0(0,+\infty,H^2),\\
			\label{3-5}&(\varphi_x,\psi_x)(t,x)\in L^2(0,+\infty,H^1),\quad(\tilde{\theta}_x,\tilde{q}_x,\tilde{S}_x)\in L^2(0,+\infty,H^2).
		\end{align}
		Moreover, this solution decay to $(0,0,0)$ uniformly in $x$ as $t\rightarrow \infty$:
		\begin{align*}
			\|(\varphi,\psi,\tilde{\theta},\tilde{q},\tilde{S})\|_{W^{1,\infty}}\rightarrow 0,\quad as \quad t\rightarrow \infty.
		\end{align*}
	\end{theorem}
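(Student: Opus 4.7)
The plan is the familiar continuation strategy: establish local well-posedness of \eqref{3.10}-\eqref{y3.11} in $H^2$, derive uniform-in-time a priori estimates under a smallness assumption on the $H^2$ norm of the perturbation, and extend the local solution to all times; uniform decay then follows from integrability of the dissipation.

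\textbf{Local existence.} Because the relaxation replaces the parabolic terms of the Navier-Stokes-Fourier system by damped hyperbolic equations for $q$ and $S$, the reformulated system \eqref{3.10} is symmetric hyperbolic in $(\varphi,\psi,\tilde\theta,\tilde q,\tilde S)$ with source terms whose $W^{2,\infty}$ norms are controlled by Lemma~\ref{lem2.1}. A standard fixed-point/iteration scheme in $C([0,T_0];H^2)$ yields a unique local solution on a short interval $[0,T_0]$ with $T_0$ depending only on $E_0$.

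\textbf{A priori estimates.} Set $N(T):=\sup_{0\le t\le T}\|(\varphi,\psi,\tilde\theta,\tilde q,\tilde S)(t)\|_{H^2}$ and assume $N(T)$ small. The target is an inequality of the form
\begin{equation*}
N(T)^2+\int_0^T\!\Bigl(\|(\varphi_x,\psi_x)\|_{H^1}^2+\|(\tilde\theta_x,\tilde q,\tilde S)\|_{H^2}^2+\int_{\mathbb{R}} u^R_x(\varphi^2+\tilde\theta^2)\,dx\Bigr)dt\le C(E_0+\delta).
\end{equation*}
The scheme has three stages. First, a relative-entropy $L^2$ estimate: introduce the thermodynamic potential
\begin{equation*}
\mathcal{H}(v,\theta\,|\,v^R,\theta^R):=R\,\theta^R\Phi(v^R/v)+\tfrac{R}{\gamma-1}\,\theta^R\Phi(\theta/\theta^R),\qquad \Phi(z)=z-1-\ln z,
\end{equation*}
and study the evolution of the functional $\int\bigl[\tfrac12\psi^2+\mathcal{H}+\tfrac{\tau_1}{2\kappa\theta^R}\tilde q^2+\tfrac{\tau_2}{2\mu}\tilde S^2\bigr]dx$. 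Using the background identities \eqref{3.9} and the convexity of $\Phi$, its time derivative produces both the $L^2$ bound and the crucial weighted dissipation $\int u^R_x(\varphi^2+\tilde\theta^2)\,dx$ (positive by Lemma~\ref{lem2.1}(1)), together with relaxation dissipation $\|(\tilde q,\tilde S)\|_{L^2}^2$; the background errors $Q_1^R$, $Q_2^R$, $\tau_1 q^R_t$, $\tau_2 S^R_t$ are integrable in time by Lemma~\ref{lem2.1}(2). Second, differentiating \eqref{3.10} once and twice in $x$ and pairing against the corresponding derivatives of $(\varphi,\psi,\tilde\theta,\tilde q,\tilde S)$ gives dissipation of $(\tilde q,\tilde S)$ in $H^2$ directly from the damping terms; the momentum and temperature equations then recover dissipation of $(\psi_x,\tilde\theta_x)$ in $H^1$ by solving for $\psi_x$ and $\tilde\theta_x$ in terms of $(\tilde S,\tilde S_t)$ and $(\tilde q,\tilde q_x,\tilde\theta_t)$ respectively. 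Finally, the dissipation of $\varphi_x$ is recovered by the classical cross-multiplier trick of testing the momentum equation with $\varphi_x/v$, which converts the principal pressure term $-p_v\varphi_x^2/v$ into a positive dissipation modulo controllable remainders.

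\textbf{Main obstacle.} The principal difficulty, absent from the isentropic analogue \cite{HW23}, comes from the extra nonlinear couplings in the temperature equation, namely $a'(\theta)q^2\theta_t$ and $\tfrac{2a(\theta)\kappa}{\tau_1}\theta_x q$. These terms are cubic in the perturbation, not sign-definite, and mix time and space derivatives, so a naive $L^2$-energy estimate fails. I would handle them by splitting $q=\tilde q+q^R$ with $|q^R|\le C|\theta^R_x|$ from \eqref{3.9}, and by substituting $\theta_t$ from the temperature equation itself to remove the time derivative; the resulting higher-order products are then absorbed via $\varepsilon$-Young into the relaxation dissipation of $\tilde q$ and the weighted background dissipation of $\tilde\theta$. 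The same strategy carries through for the commutators appearing in the $H^1$ and $H^2$ estimates, where the Maxwell/Cattaneo dampings are again the only source of hyperbolic dissipation and must absorb every cubic remainder.

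\textbf{Conclusion.} Once the a priori estimate is closed, a standard continuation argument combined with the local existence theorem yields the global solution with the regularity \eqref{3-4}-\eqref{3-5}. For decay, the dissipation bound gives $\|(\varphi_x,\psi_x,\tilde\theta_x,\tilde q,\tilde S)\|_{H^1}^2\in L^1(0,\infty)$; coupling this with $\tfrac{d}{dt}\|(\varphi_x,\psi_x,\tilde\theta_x,\tilde q,\tilde S)\|_{L^2}^2\in L^1(0,\infty)$ (which follows from the equations and the already-controlled quantities) forces $\|(\varphi_x,\psi_x,\tilde\theta_x,\tilde q,\tilde S)\|_{L^2}\to 0$, and Sobolev embedding $H^1\hookrightarrow L^\infty$ then delivers $\|(\varphi,\psi,\tilde\theta,\tilde q,\tilde S)\|_{W^{1,\infty}}\to 0$, completing the proof.
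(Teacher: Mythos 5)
Your proposal follows essentially the same route as the paper: a relative-entropy $L^2$ estimate built around the smooth approximate rarefaction wave that yields the weighted dissipation $\int u^R_x(\varphi^2+\tilde\theta^2)\,\mathrm{d}x$, first- and second-order energy estimates, recovery of the dissipation of $\varphi_x,\psi_x,\tilde\theta_x$ (and their derivatives) from the Cattaneo/Maxwell relaxation equations and the momentum equation, and then local existence plus continuation, with decay deduced from the time-integrability of the dissipation. The only slip is in the last step: since only $\|(\varphi_x,\psi_x,\tilde\theta_x,\tilde q,\tilde S)\|_{L^2}\to0$ while the full $H^2$ norm is merely bounded, the plain embedding $H^1\hookrightarrow L^\infty$ does not give the $W^{1,\infty}$ decay; one should instead use the interpolation $\|f\|_{L^\infty}\le C\|f\|_{L^2}^{1/2}\|f_x\|_{L^2}^{1/2}$ (as the paper does), which is a one-line fix given what you have already established.
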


	To prove Theorem \ref{thm3.1}, the key point is to show the a priori estimate of solutions to the problem \eqref{3.10}-\eqref{y3.11}. To do this, we define the energy
	$$
	E(t) := \sup_{0\le s\le t}\|(\varphi,\psi,\tilde\theta,\tilde q,\tilde S)(s,\cdot)\|_{H^2}^2,
	$$
	and dissipation
	\begin{align*}
		\mathcal{D}(t) :=  \|(\varphi_x,\psi_x,\tilde\theta_x)(t,\cdot)\|_{H^1}^2
		+ \|(\tilde q,\tilde S)(t,\cdot)\|_{H^2}^2 +\int_\mathbb{R}|v^R_{x}||(\varphi,\tilde{\theta})|^2\mathrm{d}x .
	\end{align*}
	For notational simplicity, in what follows we write $\int$ for $\int_\mathbb{R}$.
	
	The a priori estimate result is stated as follows.
	
	\begin{proposition}\label{prop3.2}
		Let $T>0$ and suppose $(\phi,\psi,\tilde\theta,\tilde q,\tilde S)$ is a smooth solution
		on $[0,T]$ with
		\[
		(\phi,\psi,\tilde\theta,\tilde q,\tilde S)\in C^0([0,T];H^2)\cap C^1([0,T];H^1).
		\]
		Then there exist constants
		$C>0$ and $\delta_2>0$ (independent of $T$) such that if
		\begin{align}\label{3-6}
			E(T)+\delta<\delta_2,
		\end{align}
		then the solution $(\varphi,\psi,\tilde{\theta},\tilde{q},\tilde{S})$ satisfies
		\begin{align}\label{3-7}
			E(t) +\int_{0}^{s} \mathcal{D}(t) \mathrm{d}s \le C\big( E_0 + E(t)^{3/2} + \delta^\theta \big)
		\end{align}
		for all $t\in(0,T)$ with $\theta=\min\{\frac{1}{2},\frac{3}{2}-\frac{1}{q}\}$.
	\end{proposition}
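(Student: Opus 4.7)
I would follow the three-step programme sketched by the authors: (1) an $L^2$-level estimate via a relative-entropy functional built around the smooth approximate rarefaction $(v^R,u^R,\theta^R)$, which yields both $L^2$ control of $(\varphi,\psi,\tilde\theta,\tilde q,\tilde S)$ and the weighted dissipation $\int |v^R_x|(\varphi^2+\tilde\theta^2)\,\mathrm dx$; (2) $H^1$- and $H^2$-level bounds by applying the standard energy method to the $x$-differentiated system \eqref{3.10}; (3) control of the inviscid-wave forcing terms $Q_1^R,Q_2^R,q^R_t,S^R_t$ in $L^1_tL^2_x$ via the decay estimates of Lemma~\ref{lem2.1}. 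Under the smallness assumption \eqref{3-6}, $\|(\varphi,\psi,\tilde\theta,\tilde q,\tilde S)\|_{L^\infty}$ is small by the embedding $H^2\hookrightarrow W^{1,\infty}$, so cubic remainders can be absorbed into $CE(t)^{3/2}$.

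\textbf{Relative-entropy step.} Motivated by the Coleman--Noll compatible energy $e=C_v\theta+\tfrac{\tau_1}{2\kappa\theta}q^2$, I would introduce
\[
\mathcal E_{\mathrm{rel}}:=\tfrac{1}{2}\psi^2+R\,\theta^R\,\Phi\!\bigl(\tfrac{v}{v^R}\bigr)+C_v\,\theta^R\,\Phi\!\bigl(\tfrac{\theta}{\theta^R}\bigr)+\tfrac{\tau_1}{2\kappa\theta}\tilde q^2+\tfrac{\tau_2}{2\mu}\tilde S^2,\qquad \Phi(z)=z-1-\ln z,
\]
whose first three terms are the classical relative entropy of full Euler (quadratically equivalent to $\varphi^2+\psi^2+\tilde\theta^2$ under \eqref{3-6}), and whose last two give the natural $L^2$ control of the relaxation variables. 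Testing \eqref{3.10} against the conjugate entropy variables, subtracting \eqref{3.9} to cancel flux terms, and integrating by parts, I expect an identity of the form
\[
\tfrac{\mathrm d}{\mathrm dt}\!\int\!\mathcal E_{\mathrm{rel}}\,\mathrm dx+c\!\int\! u^R_x\bigl(\alpha\varphi^2+\beta\tilde\theta^2\bigr)\mathrm dx+\!\int\!\tfrac{v}{\kappa\theta}\tilde q^2+\tfrac{v}{\mu}\tilde S^2\,\mathrm dx\le \mathcal R(t),
\]
with $\alpha,\beta>0$. Since Lemma~\ref{lem2.1}(1) gives $u^R_x>0$ and $u^R_x\sim|v^R_x|$, the middle term is exactly the weighted dissipation appearing in $\mathcal D(t)$. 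The remainder $\mathcal R$ gathers (a) cubic perturbations bounded by $E(t)^{1/2}\mathcal D(t)$ and (b) forcing from $Q_1^R,Q_2^R,q^R_t,S^R_t$.

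\textbf{Higher-order estimates and closure.} For the $H^1$ and $H^2$ estimates I would differentiate \eqref{3.10} once and twice in $x$ and test with symmetric multipliers such as $\tfrac{p}{v^2}\varphi_x,\psi_x,\tfrac{R}{(\gamma-1)\theta}\tilde\theta_x,\tilde q_x,\tilde S_x$ and their second-derivative analogues, producing the dissipation of $(\varphi_x,\psi_x,\tilde\theta_x)$ in $H^1$ and of $(\tilde q,\tilde S)$ in $H^2$ modulo commutators with $v,\theta$ and cross-terms with the background gradients, all of which are cubic via $H^2\hookrightarrow W^{1,\infty}$ and hence absorbed in $CE(t)^{3/2}$. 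The forcings are essentially second or third $x$-derivatives of the smooth rarefaction, so their $L^1_tL^2_x$ norms are controlled through Lemma~\ref{lem2.1}(2); the hybrid bound there, integrated in time, produces a net contribution $C\delta^\theta$ with $\theta=\min\{1/2,\,3/2-1/q\}$ corresponding exactly to the two endpoints of its two decay regimes. Combining the three sets of estimates and using Young's inequality to swallow a small multiple of $\mathcal D(t)$ on the right hand side then yields \eqref{3-7}.

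\textbf{Main obstacle.} The principal difficulty, as the authors emphasise, is the non-symmetric coupling introduced by the new nonlinearities in the temperature equation of \eqref{3.10}: the terms $a'(\theta)q^2\theta_t$, $-\tfrac{2a(\theta)\kappa}{\tau_1}q\theta_x$ and the heating source $\tfrac{2a(\theta)v}{\tau_1}q^2+\tfrac{v}{\mu}S^2$ produce mixed contributions that a purely $L^2$ test cannot sign. My strategy is to use the thermodynamic multiplier $1-\theta^R/\theta$ on the temperature equation so that $-\tfrac{2a(\theta)\kappa}{\tau_1}q\theta_x$ pairs with the rewritten Cattaneo equation (using $vq-v^Rq^R=v\tilde q+\varphi q^R$) to yield, after integration by parts, a conservative expression plus $q^2$-quadratic and cubic remainders, the latter absorbed by the smallness of $\|\tilde q\|_{L^\infty}$. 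A second subtle point is the boundary-layer term $\int\varphi q^R\tilde q\,\mathrm dx$ arising from that same decomposition: bounding it requires precisely the weighted dissipation from step~(1), since $|q^R|\lesssim|\theta^R_x|\lesssim|v^R_x|$ lets Cauchy--Schwarz trade $\varphi^2$ against $|v^R_x|\varphi^2$. This is where the new term in $\mathcal D(t)$ earns its keep.
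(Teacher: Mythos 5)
Your step (1) matches the paper's Lemma \ref{lem4.1} almost verbatim: the functional $\mathcal E_{\rm rel}$ is exactly the paper's $\eta$ in \eqref{3.11} (since $C_v=\tfrac{R}{\gamma-1}$), the weighted dissipation $\int|v^R_x||(\varphi,\tilde\theta)|^2\,\mathrm dx$ arises from the terms $J_1,J_2$ exactly as you predict, and your remark about the term $\int\frac{q^R}{\kappa\theta}\varphi\tilde q\,\mathrm dx$ being absorbed by that weighted dissipation is precisely how the paper treats $B_4$ in \eqref{3.39}. The treatment of the forcings $Q_1^R,Q_2^R,q^R_t,S^R_t$ via Lemma \ref{lem2.1} also coincides with the paper.

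The gap is in your step (2). You claim that testing the once- and twice-differentiated system with symmetric multipliers ``produces the dissipation of $(\varphi_x,\psi_x,\tilde\theta_x)$ in $H^1$ and of $(\tilde q,\tilde S)$ in $H^2$ modulo commutators.'' It does not: the system \eqref{3.48} is hyperbolic with relaxation, with no second-order terms in the $\varphi,\psi,\tilde\theta$ equations, so the energy identities for the differentiated system (the paper's \eqref{3.52} and \eqref{3.82}) yield on the left only the relaxation dissipation $\int\frac{v^3}{\kappa R\theta^2}\tilde q_x^2$, $\int\frac{v^3}{\mu R\theta}\tilde S_x^2$ (and their second-order analogues); no negative-definite terms in $\varphi_x,\psi_x,\tilde\theta_x$ appear at all. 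Since $\mathcal D(t)$ contains $\|(\varphi_x,\psi_x,\tilde\theta_x)\|_{H^1}^2$, and this is needed both to state \eqref{3-7} and to obtain the $L^2(0,\infty;H^1)$ regularity and decay in Theorem \ref{thm3.1}, your argument as written cannot close. The missing ingredient is a separate compensating (Kawashima-type) estimate exploiting the relaxation structure: multiply the Cattaneo equation $(\ref{3.48})_4$ by $\tilde\theta_x$ so that $\kappa\tilde\theta_x^2$ appears as the principal term, handle $\tau_1\tilde q_t\tilde\theta_x$ by integrating by parts in $t$ and using the already-established bounds on $\tilde q$, $\tilde\theta_t$ and the $\tilde q_x$ dissipation; similarly multiply the momentum equation $(\ref{3.48})_2$ by $\varphi_x$ to extract $\int\frac{R\theta}{v^2}\varphi_x^2$, and Maxwell's law $(\ref{3.48})_5$ by $\psi_x$ to extract $\int\mu\psi_x^2$, then repeat at second order with the differentiated equations. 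This is the content of the paper's two final lemmas (the unnumbered lemma after Lemma \ref{lem4.3} and Lemma \ref{lem4.5}), and without it Proposition \ref{prop3.2} is not proved; it is a structural step, not a matter of absorbing commutators or cubic remainders into $CE(t)^{3/2}$.
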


	\begin{proof}[Proof of Theorem \ref{thm3.1}]
		To begin with, we choose a constant $\delta_2$ such that $C \delta_2^{1/2} \leq \dfrac{1}{2}$.
		Then, under the assumption \ref{prop3.2}, we have
		\begin{align}\label{3-8}
			E(t) \leq 2C(E_0 + \delta^\theta). 
		\end{align}
		Next, we choose $\delta_1$ to be sufficiently small, satisfying $2C(\delta_1 + \delta_1^\theta) + \delta_1 < \frac{\delta_2}{2}$. From this, it follows that $E(t) + \delta < \frac{\delta_2}{2}$ by noting that the initial condition satisfies $E_0 + \delta < \delta_1 < \frac{\delta_2}{2}$. We thus close the a priori assumption \eqref{3-6}. Therefore, combining the local existence theorem with the a priori estimate \eqref{3-8}, we construct a global-in-time solution with regularity \eqref{3-4} and \eqref{3-5} through classical continuation methods.
		
		Since $\mathcal{D}(t)$ is integrable in time, we have
		$(\phi_x,\psi_x,\tilde\theta_x)\in L^2(0,\infty;H^1)$
		and $(\tilde q,\tilde S)\in L^2(0,\infty;H^2)$.
		Together with the uniform bound $E(t)\le C$ and the Sobolev interpolation inequality, we have
		\[
		\|(\phi,\psi,\tilde\theta,\tilde q,\tilde S)(t)\|_{L^\infty}
		\le C\|(\phi,\psi,\tilde\theta,\tilde q,\tilde S)(t)\|_{L^2}^{1/2}\|(\phi_x,\psi_x,\tilde\theta_x,\tilde q_x, \tilde S_x)(t)\|_{L^2}^{1/2},
		\]
		\[
		\|(\phi_x,\psi_x,\tilde\theta_x,\tilde q_x, \tilde S_x)(t)\|_{L^\infty}
		\le C\|(\phi_x,\psi_x,\tilde\theta_x,\tilde q_x, \tilde S_x)(t)\|_{L^2}^{1/2}\|(\phi_{xx},\psi_{xx},\tilde\theta_{xx},\tilde q_{xx}, \tilde S_{xx})(t)\|_{L^2}^{1/2},
		\]
		this yields
		\begin{align}\label{y3.6}
			\|(\phi,\psi,\tilde\theta,\tilde q,\tilde S)(t)\|_{W^{1,\infty}}\to0
			\quad\text{as }t\to\infty.
		\end{align}
		Thus, the proof of the global existence and asymptotic stability is completed.
	\end{proof}
	Finally, applying Theorem \ref{thm3.1}, we conclude the proof of Theorem \ref{th1.1}.
	\begin{proof}[Proof of Theorem \ref{th1.1}]	
		Let $I_0+\delta$ is suitable small where
		\begin{align*}
			I_0 = \| (v_0 - v_0^r, u_0 - u_0^r) \|_{L^2} + \| ((v_0)_x, (u_0)_x) \|_{H^1} + \|(\tilde\theta_0,\tilde q_0,\tilde{S}_0)\|_{H^2}.
		\end{align*}
		Then, we have
		\[
		\|(\varphi_0, \psi_0)\|_{L^2} \le \|(v_0 - v_0^r, u_0 - u_0^r)\|_{L^2} + \|(v_0^r - v^R_0, u_0^r - u^R_0)\|_{L^2} \le I_0 + C\delta,
		\]
		and
		\begin{align*}
			&\|\partial_x(\varphi_0, \psi_0)\|_{H^1} + \|(\tilde\theta_0,\tilde q_0,\tilde S_0)\|_{H^2} \\
			\le& \|\partial_x(v_0, u_0)\|_{H^1} + \|(\theta_{0}, q_0, {S}_0)\|_{H^2} + \|\partial_x(v^R_0, u^R_0)\|_{H^1} 
			+ \|(\theta^{R}_0, q^R_0, S^R_0)\|_{H^2} 
			\le I_0 + C\delta.
		\end{align*}
		Hence, $E_0 + \delta \le I_0 + C\delta$, which can be made sufficiently small.  
		Applying Theorem \ref{thm3.1}, we obtain a unique global solution $(\varphi, \psi, \tilde\theta, \tilde{q}, \tilde S)$ to problem \eqref{3.10}-\eqref{y3.11}.  
		Define 
		\[
		(v,u,\theta, q, S) = (\varphi + v^R, \psi + u^R, \tilde\theta+\theta_{R}, \tilde q+q^R, \tilde S + S^R)
		\]
		which solves the original system \eqref{1.3}-\eqref{1.4}.  
		To verify the convergence in (\ref{2.2}), observe that
		\[
		\|(v - v^r, u - u^r, \theta, q, S)\|_{L^\infty} 
		\le \|(\varphi, \psi, \tilde{\theta}, \tilde{q}, \tilde{S})\|_{L^\infty} + \|(v^R - v^r, u^R - u^r, \theta^{R}, q^R, S^R)\|_{L^\infty} \to 0 
		\]
		as $t \to +\infty$, where we have used the convergence result (\ref{y3.6}) and Lemma \ref{lem2.1}.  
		This completes the proof of Theorem \ref{th1.1}.
	\end{proof}

	\section{Uniform a Priori Estimates}
	
	To derive the key $L^2$ estimates, we introduce the relative entropy as follows:
	\begin{equation}\label{3.11}
		\eta=\frac{R}{\gamma-1}\theta^{R}\phi(\frac{\theta}{\theta^{R}})+R\theta^{R}\phi(\frac{v}{v^{R}})+\frac{1}{2}\psi^2+\frac{\tau_1}{2\kappa\theta}\tilde{q}^2+\frac{\tau_2}{2\mu}\tilde{S}^2,
	\end{equation}
	where $\phi(v)=v-1-\ln v$.
	\subsection{$L^2$ estimates}
	\begin{lemma} \label{lem4.1}
		There exists a constant $C$ such that for any $0\le t\le T$,
		\begin{equation}
			\begin{aligned}
				&\|(\varphi,\psi,\tilde{\theta},\tilde{q},\tilde{S})\|^2_{L^2}+\int_{0}^{t}\int\left(\frac{v}{\kappa\theta}\tilde{q}^2+\frac{v}{\mu}\tilde{S}^2\right)\mathrm{d}x\mathrm{d}t
				+\int_{0}^{t}\int|v^R_{x}||(\varphi,\tilde{\theta})|^2\mathrm{d}x\mathrm{d}t\\
				\leq& C\left(I_{0}+\delta_R\int_{0}^{t}\int\tilde{\theta}_x^2\mathrm{d}x+\delta_{R}^\frac{1}{2}
				+E^{\frac{1}{2}}(t)\int_{0}^{t}\mathcal{D}(t)\mathrm{d}t\right).
			\end{aligned}
		\end{equation}
	\end{lemma}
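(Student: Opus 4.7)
The plan is to use the relative entropy $\eta$ in \eqref{3.11} as a Lyapunov functional. Differentiating $\int\eta\,\mathrm{d}x$ in time and substituting the perturbation system \eqref{3.10} and the Euler-type system \eqref{3.9} satisfied by $(v^R,u^R,\theta^R,q^R,S^R)$, I would compute each contribution: the $\psi$ equation tested against $\psi$, the $\tilde q$ equation tested against $\tilde q/(\kappa\theta)$, the $\tilde S$ equation tested against $\tilde S/\mu$, and the two $\phi$-terms differentiated via the $v$ and $\theta$ equations. The Cattaneo and Maxwell relaxations immediately deliver the dissipation $\frac{v}{\kappa\theta}\tilde q^2+\frac{v}{\mu}\tilde S^2$, while the quadratic structure of $\eta$ guarantees that $\int\eta\,\mathrm{d}x\sim\|(\varphi,\psi,\tilde\theta,\tilde q,\tilde S)\|_{L^2}^2$ under the smallness assumption, by Taylor-expanding $\phi(y)=y-1-\ln y$ around $y=1$.

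Next, I would isolate the weighted good term $\int|v^R_x||(\varphi,\tilde\theta)|^2\,\mathrm{d}x$. After all flux terms are integrated out, the remaining $u^R_x$-weighted contribution from the $\phi$ pieces combines with the pressure work $(p-p^R)u^R_x$ through the Taylor expansion $p-p^R=p^R_v\varphi+p^R_\theta\tilde\theta+O(|\varphi|^2+|\tilde\theta|^2)$; since $u^R_x>0$ and $v^R_x\sim u^R_x$ by Lemma \ref{lem2.1}(1), this produces a positive-definite quadratic in $(\varphi,\tilde\theta)$ of the desired form. The error terms from the fact that the smooth wave only solves the Euler system---namely $Q_1^R$, $Q_2^R$, $\tau_1 q^R_t$, and $\tau_2 S^R_t$---are paired with the perturbations and controlled via H\"older and Young's inequalities using the $L^p$ and temporal decay bounds in Lemma \ref{lem2.1}(2); integration over $[0,t]$ yields a contribution no larger than $C\delta_R^{1/2}$, and the one term naturally producing $\tilde\theta_x$ (arising, after an integration by parts, from the $(\theta^R_x/v^R)_x$ source) is absorbed into $\delta_R\int_0^t\int\tilde\theta_x^2$ by Young's inequality.

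The main obstacle will be the careful bookkeeping of cross-terms that mix the hydrodynamic perturbations $(\varphi,\psi,\tilde\theta)$ with the relaxation variables $(\tilde q,\tilde S)$. Unlike the classical Navier-Stokes-Fourier setting where parabolic smoothing handles such couplings cleanly, the hyperbolic nature of \eqref{3.10} means the structural cancellations between $\kappa\tilde\theta_x\tilde q/(\kappa\theta)$ and $\tilde q_x\tilde\theta/\theta^R$ (and similarly for $\psi_x\tilde S$) are only approximate: residuals of the schematic form $(\theta^{-1}-(\theta^R)^{-1})\tilde q\tilde\theta_x$, $(v-v^R)\psi_x\tilde S/(vv^R)$, and higher-order Taylor remainders will survive. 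These must be reduced to quantities absorbable by $E^{1/2}(t)\mathcal{D}(t)$ using the a priori smallness $E(T)+\delta<\delta_2$ and Sobolev embedding---precisely the mechanism anticipated in the last term on the right-hand side of the claimed estimate. Once these cancellations are verified and all pieces are assembled, time integration over $[0,t]$ yields the stated inequality.
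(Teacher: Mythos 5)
Your proposal follows essentially the same route as the paper's proof: the same relative entropy functional \eqref{3.11}, dissipation $\frac{v}{\kappa\theta}\tilde q^2+\frac{v}{\mu}\tilde S^2$ extracted from the Cattaneo/Maxwell relaxation, the wave-gradient good term obtained from the $u^R_x$-weighted quadratic pieces combined with the pressure work (the paper uses $\theta^R_t=-\frac{\gamma-1}{R}p^Ru^R_x$ and the exact identity $p-p^R=\frac{R}{v}\tilde\theta-\frac{p^R}{v}\varphi$, then completes the square), and the source terms $Q_1^R$, $Q_2^R$, $\tau_1 q^R_t$, $\tau_2 S^R_t$ controlled by the decay bounds of Lemma \ref{lem2.1} to yield $\delta_R^{1/2}$, with cubic residuals absorbed into $E^{1/2}(t)\mathcal{D}(t)$. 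The only cosmetic difference is bookkeeping: in the paper the $\delta_R\int_0^t\int\tilde\theta_x^2$ contribution arises from the Cattaneo coupling term $\frac{\theta_x}{\theta^2}\tilde\theta q^R$ (estimate of $B_2$) rather than from an integration by parts on the $Q_2^R$ source, but either treatment leads to the stated bound.
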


	\begin{proof}
		We begin by deriving an expression for the time derivative of the entropy function.	First, from equations $(\ref{3.10})_{3}$ and $(\ref{3.10})_{4}$, we compute
		\begin{equation}
			\begin{aligned} \label{3.13}
				&\left[\frac{R}{\gamma-1}\theta^{R}\phi\left(\frac{\theta}{\theta^{R}}\right)\right]_t \\
				=& \frac{R}{\gamma-1}\theta_t^{R}\phi\left(\frac{\theta}{\theta^{R}}\right) 
				+ \frac{R}{\gamma-1}\theta^{R} \left(\frac{1}{\theta^R}-\frac{1}{\theta}\right)
				\left[ \tilde{\theta}_t + \theta_t^{R}\left(1-\frac{\theta}{\theta^{R}}\right)\right] \\
				=& \frac{R}{\gamma-1}\theta_t^{R}\phi\left(\frac{\theta}{\theta^{R}}\right) 
				+\theta^R\left(\frac{1}{\theta^R}-\frac{1}{\theta}\right)\Biggl[ -a'(\theta)q^{2}\theta_{t} 
				+ \frac{2a(\theta)\kappa}{\tau_{1}}\theta_{x}q-pu_x+ p^{R}u_x^{R}\\
				&-\tilde{q}_x+\frac{2a(\theta)v}{\tau_{1}}q^2 
				+\frac{v}{\mu} S^2-Q_{2}^{R}\Biggr]
				+\frac{R}{\gamma-1}\frac{\tilde{\theta}}{\theta}\theta_{t}^{R}\left(1-\frac{\theta}{\theta^{R}}\right)\\
				=&\frac{R}{\gamma-1}\theta_t^{R}\phi\left(\frac{\theta}{\theta^{R}}\right)+\frac{v\tilde{\theta}}{\kappa\theta^2}\left(\tilde{q}^2+2q^R\tilde{q}+(q^R)^2\right)+\frac{\tau_1\theta_{t}}{2\kappa\theta^3}\tilde{\theta}\left(\tilde{q}^2+2q^R\tilde{q}+(q^R)^2\right)+\frac{\theta_x}{\theta^2}\tilde{\theta}q^R\\
				&-\frac{R}{v}\tilde{\theta}\psi_x-\frac{u_x^R}{\theta}\tilde{\theta}(p-p^R)+\frac{\tilde{\theta}}{\theta}\frac{S^2}{\mu}v
				+p^Ru_x^R\frac{\tilde{\theta}^2}{\theta\theta^R}
				+(-\frac{\tilde{\theta}}{\theta}\tilde{q})_x-\frac{\tilde{\theta}}{\theta}Q_2^R+\frac{\theta_x}{\theta^2}\tilde{q}\\
				=&\frac{R}{\gamma-1}\theta_t^{R}\phi\left(\frac{\theta}{\theta^{R}}\right)+\frac{v\tilde{\theta}}{\kappa\theta^2}\left(\tilde{q}^2+2q^R\tilde{q}+(q^R)^2\right)+\frac{\tau_1\theta_{t}}{2\kappa\theta^3}\tilde{\theta}\left(\tilde{q}^2+2q^R\tilde{q}+(q^R)^2\right)+\frac{\theta_x}{\theta^2}\tilde{\theta}q^R\\
				&-\frac{R}{v}\tilde{\theta}\psi_x-\frac{u_x^R}{\theta}\tilde{\theta}(p-p^R)+\frac{\tilde{\theta}}{\theta}\frac{S^2}{\mu}v
				+p^Ru_x^R\frac{\tilde{\theta}^2}{\theta\theta^R}
				+(-\frac{\tilde{\theta}}{\theta}\tilde{q})_x-\frac{\tilde{\theta}}{\theta}Q_2^R\\
				&-\frac{\tau_1}{\kappa\theta}q_t^R\tilde{q}-\frac{\tau_{1}}{2\kappa}\left(\frac{\tilde{q}^2}{\theta}\right)_t-\frac{\tau_{1}}{2\kappa}\tilde{q}^2\frac{\theta_t}{\theta^2}-\frac{v}{\kappa\theta}\tilde{q}^2-\frac{q^R}{\kappa\theta}\varphi\tilde{q}.\\
			\end{aligned}
		\end{equation}
		Next, using equation $(\ref{3.10})_1$, we obtain
		\begin{equation}
			\begin{aligned}\label{3.14}
				&\left[R\theta^{R}\phi\left(\frac{v}{v^{R}}\right)\right]_t \\
				=& R\theta_t^{R}\phi\left(\frac{v}{v^{R}}\right) 
				+ R\theta^{R}\phi'\left(\frac{v}{v^{R}}\right)\left(\frac{v}{v^{R}}\right)_t \\
				=& R\theta_t^{R}\phi\left(\frac{v}{v^{R}}\right) 
				+R\theta^R\left(\frac{1}{v^R}-\frac{1}{v}\right)
				\left[\varphi_t + v_t^{R}\left(1-\frac{v}{v^{R}}\right)\right]\\
				=& R\theta_t^{R}\phi\left(\frac{v}{v^{R}}\right) 
				+R\theta^R\left(\frac{1}{v^R}-\frac{1}{v}\right)
				\left[\psi_x + v_t^{R}\left(1-\frac{v}{v^{R}}\right)\right]\\
				=& R\theta_t^{R}\phi\left(\frac{v}{v^{R}}\right) 
				+R\theta^R\left(\frac{1}{v^R}-\frac{1}{v}\right)\psi_x- \frac{p^Ru_x^R}{v^Rv}\varphi^2.
			\end{aligned}
		\end{equation}
		Furthermore, from equation $(\ref{3.10})_{2,5}$, we get
		\begin{align}\label{3.15}
			\left(\frac{1}{2}\psi^2\right)_t=&\psi\psi_t
			=\psi\left[\tilde{S}_x-Q^R_1-(p-p^R)_x\right]\nonumber\\
			=&\left(\psi\tilde{S}\right)_x-\psi_x\tilde{S}-\psi Q^R_1-\left[\psi(p-p^R)\right]_x+\psi_x(p-p^R)\nonumber\\
			=&-\left[\psi(p-p^R)\right]_x+\left(\psi\tilde{S}\right)_x-\psi Q^R_1+\psi_x(p-p^R)
			-{\frac{\tau_2}{2\mu}(\tilde{S}^2)_t}-\frac{v}{\mu}\tilde{S}^2-\frac{S^R}{\mu}\varphi\tilde{S}-\frac{\tau_2S_t^R}{\mu}\tilde{S}.
		\end{align}
		Combining the equations \eqref{3.13}-\eqref{3.15}, we have
		\begin{equation}\label{3.16}
			\begin{aligned}
				\frac{\partial\eta}{\partial t}
				&=\left[\frac{R}{\gamma-1}\theta^{R}\phi(\frac{\theta}{\theta^{R}})+R\theta^{R}\phi(\frac{v}{v^{R}})+\frac{1}{2}\psi^2+\frac{\tau_1}{2\kappa\theta}\tilde{q}^2+\frac{\tau_2}{2\mu}\tilde{S}^2\right]_t\\
				&=R\theta_t^{R}\phi\left(\frac{v}{v^{R}}\right)- \frac{p^Ru_x^R}{v^Rv}\varphi^2\\
				&\quad+\frac{R}{\gamma-1}\theta_t^{R}\phi\left(\frac{\theta}{\theta^{R}}\right)-\frac{u_x^R}{\theta}\tilde{\theta}(p-p^R)+p^Ru_x^R\frac{\tilde{\theta}^2}{\theta\theta^R}\\
				&\quad+\left[-\frac{\tilde{\theta}}{\theta}\tilde{q}-\psi(p-p^R)+\psi\tilde{S}\right]_x\\
				&\quad+\frac{\tilde{\theta}}{\theta}\frac{S^2}{\mu}v+\frac{v\tilde{\theta}}{\kappa\theta^2}\left(\tilde{q}^2+2q^R\tilde{q}+(q^R)^2\right)-\frac{\tau_{1}}{2\kappa}\tilde{q}^2\frac{\theta_{t}}{\theta^{2}}+\frac{\tau_1\theta_{t}}{2\kappa\theta^3}\tilde{\theta}\left(\tilde{q}^2+2q^R\tilde{q}+(q^R)^2\right)\\
				&\quad+\frac{\theta_x}{\theta^2}\tilde{\theta}q^R-\frac{\tau_1}{\kappa\theta}q_{t}^{R}\tilde{q}-\frac{\tau_2S_t^R}{\mu}\tilde{S}
				-\frac{q^R}{\kappa\theta}\varphi\tilde{q}-\frac{S^R}{\mu}\varphi\tilde{S}-\frac{\tilde{\theta}}{\theta}Q_2^R-\psi Q^R_1-\frac{v}{\kappa\theta}\tilde{q}^2-\frac{v}{\mu}\tilde{S}^2.
			\end{aligned}
		\end{equation}
		We now analyze the terms on the right-hand side of \eqref{3.16}. Introduce 
		\begin{align*}
			&J_{1}=R\theta_t^{R}\phi\left(\frac{v}{v^{R}}\right)- \frac{p^Ru_x^R}{v^Rv}\varphi^2,\\
			&J_{2}=\frac{R}{\gamma-1}\theta_t^{R}\phi\left(\frac{\theta}{\theta^{R}}\right)-\frac{u_x^R}{\theta}\tilde{\theta}(p-p^R)+p^Ru_x^R\frac{\tilde{\theta}^2}{\theta\theta^R}.
		\end{align*}
		Recalling the facts that $\phi(1)=\phi'(1)=0$, and $\phi''(1)=1$, we expand $\phi\left(\frac{v}{v^R}\right)$ to second order:
		\begin{align}\label{4.7}
			\phi\left(\frac{v}{v^R}\right)=\frac{(v-v^R)^2}{2(v^R)^2}+o(|v-v^R|^3).
		\end{align}
		From equation $(\ref{3.7})_{3}$, we have $\theta^R_{t}=-\frac{\gamma-1}{R}p^Ru^R_{x}$.
		Substituting this and expansion \ref{4.7} into $J_1$ yields
		\begin{align}\label{3.18}
			J_{1}&=R\cdot\left(-\frac{\gamma-1}{R}p^Ru^R_{x}\right)\cdot\frac{\varphi^2}{2(v^R)^2}-\frac{p^Ru_x^R}{v^Rv}\varphi^2\nonumber\\
			&=-\frac{p^Ru^R_{x}}{v^R}\varphi^2\left(\frac{\gamma-1}{2v^R}+\frac{1}{v}\right)
			=-\frac{p^Ru^R_{x}}{v^R}\varphi^2\left(\frac{\gamma+1}{2v^R}+\frac{-\varphi}{vv^R}\right)\nonumber\\
			&\leq-\frac{(\gamma+1)p^Ru^R_{x}}{2(v^R)^2}\varphi^2+C|u^R_{x}||\varphi|^3.
		\end{align}
		For $J_2$, we first express the pressure difference as
		\begin{align*}
			p-p^R=\frac{R\theta}{v}-\frac{R\theta^R}{v^R}
			=\frac{R}{v}\tilde{\theta}-\frac{p^R}{v}\varphi.
		\end{align*}
		Substituting this and the second-order expansion of $\phi\left(\frac{\theta}{\theta^R}\right)$ into $J_2$ gives
		\begin{equation}\label{3.19}
			\begin{aligned}
				J_{2}&=-p^Ru^R_{x}\frac{\tilde{\theta}^2}{2(\theta^R)^2}-\frac{u_x^R}{\theta}\tilde{\theta}\left(\frac{R}{v}\tilde{\theta}-\frac{p^R}{v}\varphi\right)+p^Ru_x^R\frac{\tilde{\theta}^2}{\theta\theta^R}\\
				&:=u^R_{x}\left[-\frac{R}{2v^R\theta^R}\tilde{\theta}^2+L\right],
			\end{aligned}
		\end{equation}
		where
		\begin{align}\label{3.20}
			L&=-\frac{R}{\theta v}\tilde{\theta}^2+\frac{p^R}{\theta v}\varphi\tilde{\theta}+p^R\frac{\tilde{\theta}^2}{\theta\theta^R}
			=\frac{R}{v^R v}\tilde{\theta}\varphi \nonumber\\
			&=\frac{R}{v^R}\left[(\frac{1}{v}-\frac{1}{v^R})+\frac{1}{v^R}\right]\tilde{\theta}\varphi
			=-\frac{R}{v^Rvv^R}\tilde{\theta}\varphi^2+\frac{p^R}{\theta^Rv^R}\tilde{\theta}\varphi.
		\end{align}
		Substituting equation \eqref{3.20} into \eqref{3.19}, we obtain
		\begin{equation}
			\begin{aligned}\label{3.21}
				J_2&=u^R_{x}\left[-\frac{R}{2v^R\theta^R}(\theta-\theta^R)+\frac{p^R}{\theta^Rv^R}(v-v^R)\right](\theta-\theta^R)-\frac{Ru^R_{x}}{v^Rvv^R}(\theta-\theta^R)(v-v^R)^2\\
				&\leq u^R_{x}\left[-\frac{R}{2v^R\theta^R}(\theta-\theta^R)+\frac{p^R}{\theta^Rv^R}(v-v^R)\right](\theta-\theta^R)+C|u^R_{x}||\tilde{\theta}||\varphi|^2.
			\end{aligned}
		\end{equation}
		Combining \eqref{3.21} with \eqref{3.18}, and using $u^R_{x}\sim v^R_{x}$ and the {\it Cauchy} inequality,  we deduce
		\begin{align}
			&\int_{0}^{t}\int (J_{1}+J_{2})\mathrm{d}x\mathrm{d}t\nonumber\\
			&\leq \int_{0}^{t}\int-\frac{Ru^R_{x}}{2(v^R)^2}\left[\frac{\gamma \theta^R}{v^R}\varphi^2+\frac{\theta^R}{v^R}\varphi^2-2\varphi\tilde{\theta}+\frac{v^R}{\theta^R}\tilde{\theta}^2\right]\mathrm{d}x\mathrm{d}t+\int_{0}^{t}\int C|u^R_{x}||\varphi|^2 (|\varphi|+|\tilde{\theta}|)\mathrm{d}x\mathrm{d}t \nonumber\\
			&\leq \int_{0}^{t}\int-\frac{Ru^R_{x}}{2(v^R)^2}\left[\frac{(\gamma-1)\theta^R}{v^R}\varphi^2+{\frac{v^R}{2\theta^R}}\tilde{\theta}^2\right]\mathrm{d}x\mathrm{d}t+C E^\frac{1}{2}(t)\int_{0}^{t}\mathcal{D}(t)\mathrm{d}t \nonumber\\
			&\leq -C\int_{0}^{t}\int |v^R_{x}||(\varphi,\tilde{\theta})|^2\mathrm{d}x\mathrm{d}t+C E^\frac{1}{2}(t)\int_{0}^{t}\mathcal{D}(t)\mathrm{d}t.
		\end{align}
		Combining the above estimates and equation \eqref{3.16}, we obtain
		\begin{equation}
			\begin{aligned}
				\frac{\partial}{\partial t}\int\eta \mathrm{d}x\leq\sum\limits_{i=1}^{5}{B_{i}}+S_{1}+S_{2}-CG^R-D+C E^\frac{1}{2}(t)\int_{0}^{t}\mathcal{D}(t)\mathrm{d}t,
			\end{aligned}
		\end{equation}
		where
		\begin{align*}
			&B_{1}=\int \left(\frac{\tilde{\theta}}{\theta}\frac{S^2}{\mu}v+\frac{v\tilde{\theta}}{\kappa\theta^2}\left(\tilde{q}^2+2q^R\tilde{q}+(q^R)^2\right)\right)\mathrm{d}x,\\
			&B_{2}=\int \left(\frac{\tau_1\theta_{t}}{2\kappa\theta^3}\tilde{\theta}\left(\tilde{q}^2+2\tilde{q}q^R+(q^R)^2\right)-\frac{\tau_{1}}{2\kappa}\tilde{q}^2\frac{\theta_{t}}{\theta^{2}}+\frac{\theta_x}{\theta^2}\tilde{\theta}q^R\right)\mathrm{d}x,\\
			&B_{3}=\int\left(-\frac{\tau_1}{\kappa\theta}q_{t}^{R}\tilde{q}-\frac{\tau_2S_t^R}{\mu}\tilde{S}\right)\mathrm{d}x,
			\quad B_{4}=\int\left(-\frac{q^R}{\kappa\theta}\varphi\tilde{q}-\frac{S^R}{\mu}\varphi\tilde{S}\right)\mathrm{d}x,\\
			&S_{1}=\int-\psi Q^R_1\mathrm{d}x,\quad S_{2}=\int-\frac{\tilde{\theta}}{\theta}Q_2^R\mathrm{d}x,
			\quad
			G^R=\int|v^R_{x}||(\varphi,\tilde{\theta})|^2\mathrm{d}x,
			\quad D=\int\left(\frac{v}{\kappa\theta}\tilde{q}^2+\frac{v}{\mu}\tilde{S}^2\right)\mathrm{d}x.
		\end{align*}
		The term $D$ is clearly dissipative. And the term $G^R$ offers an additional damping mechanism weighted by the gradient of the reference solution. In the following, we estimate each of the remaining terms $B_i(i=1,\cdots,5), S_1$ and $S_2$ in sequence.
		
		$\boldsymbol\bullet$\textbf{The estimate of $B_{1}$:}
		First, we have
		\begin{equation}
			\begin{aligned}
				B_{1}=\int \frac{\tilde{\theta}}{\theta}\frac{S^2}{\mu}v+\frac{v\tilde{\theta}}{\kappa\theta^2}\left(\tilde{q}^2+2q^R\tilde{q}+(q^R)^2\right)\mathrm{d}x
				:=B_{11}+B_{12}.
			\end{aligned}
		\end{equation}
		By applying equation $(\ref{3.3})_{5}$, we obtain
		\begin{equation*}
			\begin{aligned}
				B_{11}&=\int \frac{\tilde{\theta}}{\theta}\frac{S^2}{\mu}v\mathrm{d}x
				=\int \frac{\tilde{\theta}v}{\theta\mu}\left[(S-S^R)^2+2(S-S^R)S^R+(S^R)^2\right]\mathrm{d}x\\
				&\leq
				\int\left(|\tilde{\theta}||\tilde{S}|^2+|\tilde{\theta}|\frac{\mu^2|u^R_{x}|^2}{|v^R|^2}\right)\mathrm{d}x
				\leq C\int|\tilde{\theta}|\left(|\tilde{S}|^2+|u^R_{x}|^2\right)\mathrm{d}x.
			\end{aligned}
		\end{equation*}
		From equation $(\ref{3.3})_{4}$, we know $q^R=-\frac{\kappa\theta^R_{x}}{v^R}$, thus we have
		\begin{align*}
			B_{12}&=\int\frac{v\tilde{\theta}}{\kappa\theta^2}\left(\tilde{q}^2+2q^R\tilde{q}+(q^R)^2\right)\mathrm{d}x \nonumber\\
			&\leq C\int\left(\frac{v}{\kappa\theta^2}\tilde{\theta}\tilde{q}^2+\frac{v(\theta^R_{x})^2}{(v^R)^2\theta^2}\tilde{\theta}\right)\mathrm{d}x \leq C\int|\tilde{\theta}|\left(|\tilde{q}|^2+|\theta^R_{x}|^2\right)\mathrm{d}x.
		\end{align*}
		Furthermore, noting that $|\theta^R_{x}|\sim|u^R_{x}|\sim|v^R_{x}|$ and applying Young's inequality with suitable exponents, we conclude
		\begin{align}\label{3.33}
			B_{1}&\leq C\int|\tilde{\theta}|(|\tilde{S}|^2+|\tilde{q}|^2)+|\tilde{\theta}|\left(|u^R_{x}|^2+|\theta^R_{x}|^2\right)\mathrm{d}x \nonumber\\
			&\leq C E^\frac{1}{2}(t) \mathcal{D}(t)+\epsilon\int|\tilde{\theta}|^2\mathrm{d}x
			+C(\epsilon)\int\left(|u^R_{x}|^4+|\theta^R_{x}|^4\right)\mathrm{d}x \nonumber\\
			&\leq \epsilon\int|\tilde{\theta}|^2\mathrm{d}x+C E^\frac{1}{2}(t)\mathcal{D}(t)+C\delta_{R}^{\frac{1}{2}}.			
		\end{align}
		$\boldsymbol\bullet$\textbf{The estimate of $B_{2}$:}
		We observe that $\|\theta_{t}\|_{L^\infty}\leq\|(\theta-\theta^R)_{t}\|_{L^\infty}+\|\theta^R_{t}\|_{L^\infty}\leq C(\epsilon_{1}+\delta_{R})$, thus
		\begin{equation}
			\begin{aligned}\label{3.34}
				B_{2}&=\int \left(\frac{\tau_1\theta_{t}}{2\kappa\theta^3}\tilde{\theta}\left(\tilde{q}^2+2\tilde{q}q^R+(q^R)^2\right)-\frac{\tau_{1}}{2\kappa}\tilde{q}^2\frac{\theta_{t}}{\theta^{2}}+\frac{\theta_x}{\theta^2}\tilde{\theta}q^R\right)\mathrm{d}x\\
				&\leq C(\epsilon_{1}+\delta_R)\int\tilde{q}^2\mathrm{d}x+C(\epsilon_{1}+\delta_R)\int|\tilde{\theta}||q^R|^2\mathrm{d}x+\int\frac{\tilde{\theta}}{\theta^2}\left[(\theta-\theta^R)_x+\theta^R_x\right]\frac{\kappa\theta^R_x}{v}\mathrm{d}x+C E^\frac{1}{2}(t)\mathcal{D}(t)\\
				&\leq (\epsilon_{1}+\delta_R)\int(\tilde{q}^2)\mathrm{d}x+C(\epsilon_{1}+\delta_R)\left(\int|\theta^R_x|^3\mathrm{d}x+\int|\theta^R_x||\theta-\theta^R|^2\mathrm{d}x\right)+\int|\theta^R_x|\tilde{\theta}_x^2\mathrm{d}x\\
				&\leq (\epsilon_{1}+\delta_R)\int(\tilde{q}^2+|\theta^R_x|\tilde{\theta}^2)\mathrm{d}x+C\delta_R(1+t)^{-2}+C\delta_R\int\tilde{\theta}_x^2\mathrm{d}x.
			\end{aligned}
		\end{equation}
		$\boldsymbol\bullet$\textbf{The estimate of $B_{3}$:}
		Taking derivative with respect to $t$ to the equations $(\ref{3.7})_{2},(\ref{3.7})_{3}$, we get
		\begin{equation}
			\begin{aligned}\label{4.17}
				&\theta^R_{xt}=-(\gamma-1)\left(\frac{\theta^R_{x}u^R_{x}}{v^R}+\frac{\theta^Ru^R_{xx}}{v^R}-\frac{\theta^Ru^R_{x}v^R_{x}}{(v^R)^2}\right),\\
				&u^R_{xt}=-p^R_{xx}=-\left(\frac{R}{(v^R)^2}(\theta^R_{xx}v^R-\theta^Rv^R_{xx})-\frac{2R}{(v^R)^3}(\theta^R_{x}v^Rv^R_{x}-\theta^R(v^R_{x})^2)\right).
			\end{aligned}
		\end{equation}
		Using \eqref{4.17}, we estimate the two components of $B_3$. First,
		\begin{align*}
			\left|-\frac{\tau_{1}}{\kappa\theta}q^R_{t}\tilde{q}\right|&=\left|-\frac{\tau_{1}}{\kappa\theta}\cdot\left[-(\frac{\kappa\theta^R_{xt}}{v^R}-\frac{\kappa\theta^R_{x}v^R_{t}}{(v^R)^2})\right]\tilde{q}\right|\\
			&=\left|-\frac{\tau_{1}}{\theta (v^R)^2}\tilde{q}\left[(\gamma-1)(\theta^R_{x}u^R_{x}+\theta^Ru^R_{xx}-\frac{\theta^Ru^R_{x}v^R_{x}}{v^R})-u^R_{x}\theta^R_{x}\right]\right|\\
			&\leq C|\tilde{q}|(\left|u^R_{x}\theta^R_{x}\right|+\left|u^R_{xx}\right|+\left|u^R_{x}v^R_{x}\right|).
		\end{align*}
		Second, since $S^R = \frac{\mu u_x^R}{v^R},$
		\begin{align*}
			\left|-\frac{\tau_{2}}{\mu}S^R_{t}\tilde{S}\right|&=\left|-\frac{\tau_{2}}{\mu}\cdot\mu\frac{u^R_{xt}v^R-u^R_{x}v^R_{t}}{(v^R)^2}\tilde{S}\right|
			=\left|\tau_{2}\tilde{S}\left(\frac{p^R_{xx}}{v^R}+\frac{(u^R_{x})^2}{(v^R)^2}\right)\right|\\
			&\leq C|\tilde{S}|\left(\left|\theta^R_{xx}\right|+\left|v^R_{xx}\right|+\left|\theta^R_{x}v^R_{x}\right|+\left|(v^R_{x})^2\right|+\left|(u^R_{x})^2\right|\right).
		\end{align*}
		Combining the above estimates and applying H\"{o}lder inequalities, and then using Young's inequality to separate the mixed products (e.g., $ | u^R_x \theta^R_x |_{L^2}^2 \le | u^R_x |_{L^4}^2 | \theta^R_x |_{L^4}^2 \le C(| u^R_x |_{L^4}^4 + | \theta^R_x |_{L^4}^4) $), we obtain 
		\begin{align}\label{3.38}
			B_{3}&\leq C\int|\tilde{q}|(\left|u^R_{x}\theta^R_{x}\right|+\left|u^R_{xx}\right|+\left|u^R_{x}v^R_{x}\right|)\mathrm{d}x \nonumber\\
			&\quad+C\int|\tilde{S}|(\left|\theta^R_{xx}\right|+\left|v^R_{xx}\right|+\left|\theta^R_{x}v^R_{x}\right|+\left|(v^R_{x})^2\right|+\left|(u^R_{x})^2\right|)\mathrm{d}x \nonumber\\
			&\leq\epsilon\int\left(|\tilde{q}|^2+|\tilde{S}|^2\right)\mathrm{d}x+C(\epsilon)\left(\|(u^R_{xx},v^R_{xx},\theta^R_{xx})\|^2_{L^2}+\|(u^R_{x},v^R_{x},\theta^R_{x})\|^4_{L^4}\right) \nonumber\\
			&\leq\epsilon\int\left(|\tilde{q}|^2+|\tilde{S}|^2\right)\mathrm{d}x+C(\epsilon)\left(\|(u^R_{xx},v^R_{xx},\theta^R_{xx})\|^2_{L^2}+\|(u^R_{x},v^R_{x},\theta^R_{x})\|^4_{L^4}\right) \nonumber\\
			&\leq\epsilon\int\left(|\tilde{q}|^2+|\tilde{S}|^2\right)\mathrm{d}x+ C(\epsilon)\left(\delta_{R}^{\frac{4}{3}-\frac{2}{q}}(1+t)^{-2-\frac{1}{2q}}+\delta_{R}(1+t)^{-3}\right).
		\end{align}
		$\boldsymbol\bullet$\textbf{The estimate of $B_{4}$:} We have
		\begin{equation}\label{3.39}
			\begin{aligned}
				B_{4}&\leq\int\left|\frac{q^R}{\kappa\theta}\varphi\tilde{q}+\frac{S^R}{\mu}\varphi\tilde{S}\right|\mathrm{d}x
				=\int\left|\frac{\theta^R_{x}}{\kappa\theta v^R}\varphi\tilde{q}+\frac{u^R_{x}}{v^R}\varphi\tilde{S}\right|\mathrm{d}x\\
				&\leq\epsilon\int\left(|\tilde{q}|^2+|\tilde{S}|^2\right)\mathrm{d}x+C(\epsilon)\int\left(|\theta^R_{x}||\varphi|^2+|u^R_{x}||\varphi|^2\right)\mathrm{d}x\\
				&\leq\epsilon D+C(\epsilon) G^R,
			\end{aligned}
		\end{equation}
		$\boldsymbol\bullet$\textbf{The estimate of $S_{1}$, $S_{2}$:}
		First, we have
		\begin{equation}\label{3.41}
			\begin{aligned}
				S_{1}&\leq\int\left|\mu\psi\frac{u^R_{xx}v^R-u^R_{x}v^R_{x}}{(v^R)^2}\right|\mathrm{d}x\\
				&\leq C\| \psi\|_{L^2}(\| u^R_{xx}\|_{L^2}+\| v^R_{x}\|_{L^4}\| u^R_{x}\|_{L^4})\\
				&\leq C\epsilon_{1}(\delta_{R}^{\frac{2}{3}-\frac{1}{q}}(1+t)^{-1-\frac{1}{4q}}+\delta_{R}^{\frac{1}{2}}(1+t)^{-\frac{3}{2}}).
			\end{aligned}
		\end{equation}
		Similarly,
		\begin{equation}\label{3.42}
			\begin{aligned}
				S_{2}\leq C\epsilon_{1}(\delta_{R}^{\frac{2}{3}-\frac{1}{q}}(1+t)^{-1-\frac{1}{4q}}+\delta_{R}^{\frac{1}{2}}(1+t)^{-\frac{3}{2}}).
			\end{aligned}
		\end{equation}
		Therefore, combining the above estimates and integrating over $[0,t]$, we obtain
		\begin{equation}
			\begin{aligned}
				&\int\left(\frac{R}{\gamma-1}\theta^{R}\phi(\frac{\theta}{\theta^{R}})+R\theta^{R}\phi(\frac{v}{v^{R}})+\frac{1}{2}\psi^2+\frac{\tau_1}{2\kappa\theta}\tilde{q}^2+\frac{\tau_2}{2\mu}\tilde{S}^2\right)\mathrm{d}x
				+\int_{0}^{t}\int\left(\frac{v}{\kappa\theta}\tilde{q}^2+\frac{v}{\mu}\tilde{S}^2\right)\mathrm{d}x\mathrm{d}t\\&
				+\int_{0}^{t}\int|v^R_{x}||(\varphi,\tilde{\theta})|^2\mathrm{d}x\mathrm{d}t
				\leq C\left(I_{0}+\delta_R\int_{0}^{t}\int\tilde{\theta}_x^2\mathrm{d}x+\delta_{R}^\frac{1}{2}
				+E^{\frac{1}{2}}(t)\int_{0}^{t}\mathcal{D}(t)\mathrm{d}t\right)
			\end{aligned}
		\end{equation}
		where $$I_{0}=\int\left(\frac{R}{\gamma-1}\theta_{0}^{R}\phi(\frac{\theta_{0}}{\theta_{0}^{R}})+R\theta_{0}^{R}\phi(\frac{v_{0}}{v_{0}^{R}})+\frac{(u_{0}-u_{0}^R)^2}{2}+\frac{\tau_1}{2\kappa\theta_{0}}(q_{0}-q_{0}^R)^2+\frac{\tau_2}{2\mu}(S_{0}-S_{0}^R)^2\right)\mathrm{d}x.$$
		By Taylor expansion, one has
		\begin{align*}
			&\phi(\frac{\theta}{\theta^{R}}):=\frac{\theta}{\theta^{R}}-1-\ln\frac{\theta}{\theta^{R}}=\frac{1}{2\xi^2}(\theta-\theta^R)^2,\\
			&\phi(\frac{v}{v^{R}}):=\frac{v}{v^{R}}-1-\ln \frac{v}{v^{R}}=\frac{1}{2\eta^2}(v-v^R)^2,
		\end{align*}
		where $\xi\in(\theta^R,\theta)$, $\eta\in(v^R,v)$.
		Hence, Lemma \ref{lem4.1} follows immediately.
	\end{proof}
	\subsection{High-order estimate}
	We can rewrite equation \eqref{3.10} as
	\begin{equation}
		\begin{cases}\label{3.48}
			\varphi_t - \psi_x = 0 \\
			\psi_t + \frac{R}{v}(\tilde{\theta}_x+\theta^R_x)-\frac{R\theta}{v^2}(\varphi_x+v^R_x)-\frac{R\theta^R_x}{v^R}+\frac{R\theta^Rv^R_x}{(v^R)^2}= \tilde{S}_x-Q_{1}^{R}\\
			\frac{R}{\gamma-1}\tilde{\theta}_{t}-\frac{\tau_{1}(\tilde{q}+q^R)^2\tilde{\theta}_{t}}{2\kappa(\tilde{\theta}+\theta^R)^2}
			-\frac{(\tilde{\theta}+\theta^R)_x(\tilde{q}+q^R)}{\tilde{\theta}+\theta^R}+p\psi_{x}+
			(p-p^{R})u_x^{R}+\tilde{q}_x\\
			\qquad\qquad\qquad=\frac{(\tilde{q}+q^R)^2v}{\kappa(\tilde{\theta}+\theta^R)}+\frac{v}{\mu} (\tilde{S}^2+2S^R\tilde{S}+(S^R)^2)-Q_{2}^{R}+\frac{\tau_{1}(\tilde{q}+q^R)^2\theta^R_t}{2\kappa(\tilde{\theta}+\theta^R)^2}\\
			\tau_1\tilde{q}_t + vq-v^{R}q^{R} + \kappa\tilde{\theta}_x = -	\tau_1q^R_t \\
			\tau_2 \tilde{S}_t + vS-v^{R}S^{R} = \mu \psi_x-\tau_2 S^R_t
		\end{cases}
	\end{equation}
	with initial condition
	\begin{align*}
		(\varphi,\psi,\tilde{\theta},\tilde{q},\tilde{S})(x,0)=(\varphi_{0},\psi_{0},\tilde{\theta}_{0},\tilde{q}_{0},\tilde{S}_{0})(x),\quad x\in\mathbb{R},
	\end{align*}
	where
	\begin{equation*}
		\begin{aligned}
			\varphi_{0}=v_{0}-v^R_{0},\quad\psi_{0}=u_{0}-u^R_{0},\quad\tilde{\theta}_{0}=\theta_{0}-\theta^R_{0},\quad\tilde{q}_{0}=q_{0}-q^R_{0},\quad\tilde{S}_{0}=S_{0}-S^R_{0}.
		\end{aligned}
	\end{equation*}	
	\begin{lemma}\label{lem4.2}
		There exists a constant $C$ such that for $0\le t\le T$, we have
		\begin{equation}
			\begin{aligned}
				&\|(\varphi_{x},\psi_{x},\tilde{\theta}_{x},\tilde{q}_{x},\tilde{S}_{x})\|^2_{L^2}+\int_{0}^{t}(\|\tilde{q}_{x}\|^2_{L^2}+\|\tilde{S}_{x}\|^2_{L^2})\mathrm{d}t\\
				&\leq C\left(E_0+E^{\frac{1}{2}}(t)\int_{0}^{t}\mathcal{D}(t)\mathrm{d}t+\delta_{R}\int_{0}^{t}\mathcal{D}(t)\mathrm{d}t+\delta_{R}^{\frac{3}{2}-\frac{1}{q}}+\delta_{R}^\frac{1}{6}\right).
			\end{aligned}
		\end{equation}
	\end{lemma}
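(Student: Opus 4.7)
The plan is to carry out an $H^1$ energy estimate on the reformulated system \eqref{3.48}, exploiting the built-in damping of the heat-flux and stress relaxation equations to produce the dissipation $\|\tilde q_x\|_{L^2}^2+\|\tilde S_x\|_{L^2}^2$. Concretely, I would differentiate each of the five equations in \eqref{3.48} once in $x$ and then multiply them by suitably weighted copies of $\varphi_x,\psi_x,\tilde\theta_x,\tilde q_x,\tilde S_x$. The critical choice is to balance the coefficients in front of $\tilde q_{xx}\tilde\theta_x$ and $\tilde\theta_{xx}\tilde q_x$ (and similarly $\tilde S_{xx}\psi_x$ with $\psi_{xx}\tilde S_x$) so that, after an integration by parts, the two principal cross-fluxes cancel exactly. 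This is the standard symmetrization trick for a hyperbolic–relaxation pair and here it forces the weights to be proportional to $\kappa$ and $\mu$ respectively.

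After these cancellations and further integration by parts, I expect to arrive at an identity of the form
\begin{equation*}
\frac{d}{dt}\mathcal E_1(t)+\int v\bigl(\tilde q_x^{\,2}+\tilde S_x^{\,2}\bigr)\,\mathrm dx \;=\; \sum_i R_i,
\end{equation*}
where $\mathcal E_1(t)\sim\|(\varphi_x,\psi_x,\tilde\theta_x,\tilde q_x,\tilde S_x)\|_{L^2}^2$. The residuals $R_i$ split into three natural families. Family (a) consists of commutator-type terms generated when the $x$-derivative hits the rarefaction background, producing factors $u^R_x,v^R_x,\theta^R_x$ and, through the terms $Q_1^R,Q_2^R$, second derivatives $u^R_{xx},v^R_{xx},\theta^R_{xx}$. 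Family (b) collects the forcings $\tau_1 q^R_t,\tau_2 S^R_t$ and the $q^R_t,S^R_t$-coefficients in the modified energy equation; using the Euler relations in \eqref{3.9} I would rewrite each $t$-derivative as a combination of spatial derivatives of the reference profile. Family (c) consists of genuinely nonlinear contributions such as $\varphi q^R\tilde q_x$, $\tilde q^{\,2}\tilde\theta_x$, $\tilde\theta_t\tilde q\tilde q_x/\theta^2$, $\tilde S^{\,2}\tilde\theta_x$, etc., inherited from the non-Fourier/non-Newton constitutive laws.

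For family (a)–(b) I would apply Young's inequality to absorb the $\tilde q_x,\tilde S_x$ factors into the dissipation, and bound the remaining pure-background $L^2$ and $L^4$ norms via Lemma~\ref{lem2.1}(2). The decay rates $(1+t)^{-1-\frac{p-1}{2pq}}$ and $(1+t)^{-2+1/p}$ for $\|\partial_x^k(u^R,v^R,\theta^R)\|_{L^p}$, when integrated in time, produce precisely the exponents $\delta_R^{3/2-1/q}$ and $\delta_R^{1/6}$ (the latter emerging from an $L^2$--$L^4$ interpolation of the cubic background terms $\|(u^R_x,v^R_x,\theta^R_x)\|_{L^4}^4$ against a sharp time integral, together with the weighted dissipation $G^R$ already available from Lemma~\ref{lem4.1}). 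For family (c) I would invoke the a~priori hypothesis $E(T)+\delta<\delta_2$ together with the Sobolev inequality $\|\cdot\|_{L^\infty}\le C\|\cdot\|_{L^2}^{1/2}\|\cdot\|_{H^1}^{1/2}$, placing one factor in $L^\infty$ to control it by $E(t)^{1/2}$ and keeping the other factors in $L^2$ to be bounded by $\mathcal D(t)$. This is what generates the term $E^{1/2}(t)\int_0^t \mathcal D(s)\,\mathrm ds$ on the right-hand side.

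The main obstacle will be family (c): the modifications $a(\theta)q^2\theta_t$ and $\tfrac{v}{\mu}S^2$ in the temperature equation produce cubic and quartic perturbation monomials with no isentropic analogue, and simply differentiating them in $x$ creates products of three derivatives that are not directly controlled by $\mathcal D(t)$. The remedy is to use the heat-flux and stress equations themselves as algebraic identities for $\tilde\theta_x,\psi_x$ in terms of $(\tilde q_t,\tilde q,\ldots)$ and $(\tilde S_t,\tilde S,\ldots)$, which allows the troublesome derivative to be shifted onto a background quantity. Once this book-keeping is done, integrating the energy identity on $[0,t]$, collecting the $\delta_R$-dependent error contributions and using $E_0$ as initial data closes the estimate and yields the stated inequality.
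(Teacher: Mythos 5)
Your proposal follows essentially the same route as the paper's proof: differentiate \eqref{3.48} once in $x$, multiply the five equations by the weighted multipliers $\varphi_x$, $\frac{v^2}{R\theta}\psi_x$, $\frac{v^2}{R\theta^2}\tilde\theta_x$, $\frac{v^2}{\kappa R\theta^2}\tilde q_x$, $\frac{v^2}{\mu R\theta}\tilde S_x$ so that the principal cross-flux terms cancel (leaving only commutator residuals), and then bound the remaining terms --- background commutators, the $q^R_t$, $S^R_t$ forcings rewritten through \eqref{3.9}, and the genuinely nonlinear contributions --- via Lemma \ref{lem2.1}, Young/Sobolev inequalities and the a priori smallness, which is exactly how the paper arrives at the stated right-hand side. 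The only minor deviations are that the paper handles the troublesome $\tilde\theta_{tx}$- and $\tilde\theta_{xx}$-bearing nonlinear terms by integration by parts in $t$ and $x$ (absorbing the small-coefficient quadratic pieces into the energy) rather than by substituting the constitutive equations, and its proof in fact produces $\delta_R^{1/2}$, which is stronger than, and immediately implies, the stated $\delta_R^{1/6}$ term.
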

	\begin{proof}
		Taking derivatives with respect to $x$ to the equations $(\ref{3.48})$, we  get
		\begin{equation}\label{3.51}
			\begin{cases}
				\varphi_{tx}-\psi_{xx}=0,\\
				\psi_{tx}-\left(\frac{R\theta_x}{v^2}-\frac{2R\theta v_x}{v^3}\right)(\varphi_x+v^R_x)-\frac{R\theta}{v^2}(\varphi_{xx}+v^R_{xx})-\frac{Rv_x}{v^2}(\tilde{\theta}_x+\theta^R_x)+\frac{R}{v}(\tilde{\theta}_{xx}+\theta^R_{xx})\\
				\qquad\qquad\qquad+\frac{2R\theta_x^Rv_x^R+R\theta^Rv^R_{xx}}{(v^R)^2}-\frac{2R\theta^R(v_x^R)^2}{(v^R)^3}-\frac{R\theta_{xx}^R}{v^R}
				=\tilde{S}_{xx}-(Q^R_{1})_{x},\\
				\frac{R}{\gamma-1}\tilde{\theta}_{tx}-\left[\frac{\tau_{1}(\tilde{q}+q^R)^2}{2\kappa(\tilde{\theta}+\theta^R)^2}\tilde{\theta_{t}}\right]_{x}-\left[\frac{(\tilde{\theta}+\theta^R)_{x}(\tilde{q}+q^R)}{\tilde{\theta}+\theta^R}\right]_{x}+
				\left(-\frac{R\theta}{v^2}(\varphi_x+v^R_x)+\frac{R}{v}(\tilde{\theta}_x+\theta^R_x)\right)\psi_{x}\\
				\qquad\qquad\qquad+\frac{R\theta}{v}\psi_{xx}+(p-p^R)_{x}u^R_x+(p-p^R)u^R_{xx}+\tilde{q}_{xx}\\
				\qquad\qquad\qquad=\left[\frac{(\tilde{q}+q^R )^2v}{\kappa(\tilde{\theta}+\theta^R)}\right]_{x}+\left[\frac{v}{\mu}(\tilde{S}^2+2S^R\tilde{S}+(S^R)^2)\right]_{x}-(Q^R_{2})_{x}+\left[\frac{\tau_{1}(\tilde{q}+q^R)^2\theta^R_{t}}{2\kappa(\tilde{\theta}+\theta^R)^2}\right]_{x},\\
				\tau_{1}\tilde{q}_{tx}+(v\tilde{q}+q^R\varphi)_{x}+\kappa\tilde{\theta}_{xx}=-\tau_{1}q^R_{tx},\\
				\tau_{2}\tilde{S}_{tx}+(v\tilde{S}+S^R\varphi)_{x}-\mu\psi_{xx}=-\tau_{2}S^R_{tx}.
			\end{cases}
		\end{equation}
		Multiplying $(\ref{3.51})_{1,2,3,4,5}$ by $\varphi_{x}$, $\frac{v^2}{R\theta}\psi_{x}$, $\frac{v^2}{R\theta^2}\tilde{\theta}_{x}$, $\frac{v^2}{\kappa R\theta^2}\tilde{q}_{x}$, $\frac{v^2}{\mu R\theta}\tilde{S}_{x}$, respectively, and integrating the results over $[0,t]\times\mathbb{R}$, we can obtain
		\begin{equation}\label{3.52}
			\begin{aligned}
				&\int\left(\frac{1}{2}\varphi_{x}^2+\frac{v^2}{2R\theta}\psi_{x}^2+\frac{v^2}{2(\gamma-1)\theta^2}\tilde{\theta}^2_{x}+\frac{\tau_{1}v^2}{2\kappa R\theta^2}\tilde{q}^2_{x}+\frac{\tau_{2}v^2}{2\mu R\theta}\tilde{S}^2_{x}\right)\mathrm{d}x\bigg|_{0}^{t}+\\
				&\int_{0}^{t}\int\frac{v^3}{\kappa R\theta^2}\tilde{q}^2_{x}\mathrm{d}x\mathrm{d}t+
				\int_{0}^{t}\int\frac{v^3}{\mu R\theta}\tilde{S}^2_{x} \mathrm{d}x\mathrm{d}t=:\sum\limits_{j=1}^{17}\int_{0}^{t}R_{j}\mathrm{d}t,
			\end{aligned}
		\end{equation}
		where
		\begin{align*}
			&R_{1}=\int\left(\frac{v}{R\theta}\left(v_t-\frac{v\theta_t}{2\theta}\right)\psi_{x}^2+\frac{v}{(\gamma-1)\theta^2}\left(v_t-\frac{v\theta_t}{\theta}\right)\tilde{\theta}_{x}^2\right)\mathrm{d}x,\\
			&R_{2}=\int \bigg[\left(\frac{\theta_x}{\theta^2}-\frac{2v_x}{v\theta}\right)(\varphi_{x}+v_{x})\psi_{x}+\frac{v_x}{\theta}\psi_x(\tilde{\theta}_{x}+\theta^R_{x})-\frac{v}{\theta}\theta_{xx}^R\psi_{x}\\
			&\quad\quad+\frac{(2\theta^R_x v^R_x+\theta^Rv^R_xx)v^2}{\theta(v^R)^2}\psi_{x}+\frac{2\theta^R(v_x^R)^2v^2}{\theta (v^R)^3}\psi_{x}+\frac{v^2\theta_{xx}^R}{\theta v^R}\psi_{x}\bigg] \mathrm{d}x,\\
			&R_{3}=-\int (Q^R_{1})_{x}\frac{v^2}{R\theta}\psi_{x}\mathrm{d}x,\quad R_{4}=\int\left[\frac{\tau_{1}(\tilde{q}+q^R)^2}{2\kappa(\tilde{\theta}+\theta^R)^2}\tilde{\theta_{t}}\right]_{x}\frac{v^2}{R\theta^2}\tilde{\theta}_{x}\mathrm{d}x,\\ &R_{5}=\int\left[\frac{(\tilde{\theta}+\theta^R)_{x}(\tilde{q}+q^R)}{\tilde{\theta}+\theta^R}\right]_{x}\frac{v^2}{R\theta^2}\tilde{\theta}_{x}\mathrm{d}x,\\
			&R_{6}=\int \left[\frac{1}{\theta}(\varphi_x+v^R_x)-\frac{v}{\theta^2}(\tilde{\theta}_{x}+\theta^R_x)\right]\psi_{x}\tilde{\theta}_{x}\mathrm{d}x,\\
			&R_{7}=-\int(p-p^R)_{x}u^R_x \frac{v^2}{R\theta^2}\tilde{\theta}_{x}\mathrm{d}x,\quad R_{8}=\int(p-p^R)u^R_{xx}\frac{v^2}{R\theta^2}\tilde{\theta}_{x}\mathrm{d}x,\\
			&R_{9}=\int\left[\frac{\tau_{1}(\tilde{q}+q^R )^2v}{\kappa(\tilde{\theta}+\theta^R)}\right]_{x} \frac{v^2}{R\theta^2}\tilde{\theta}_{x}\mathrm{d}x,\quad
			R_{10}=\int \left[\frac{v}{\mu}(\tilde{S}^2+2S^R\tilde{S}+(S^R)^2)\right]_{x} \frac{v^2}{R\theta^2}\tilde{\theta}_{x}\mathrm{d}x,\\
			&R_{11}=-\int(Q^R_{2})_{x}\frac{v^2}{R\theta^2}\tilde{\theta}_{x}\mathrm{d}x,\quad
			R_{12}=\int\left[\frac{\tau_{1}(\tilde{q}+q^R)^2\theta^R_{t}}{2\kappa(\tilde{\theta}+\theta^R)^2}\right]_{x}\frac{v^2}{R\theta^2}\tilde{\theta}_{x}\mathrm{d}x,\\
			&R_{13}=-\int(v_{x}\tilde{q}+q^R_{x}\varphi+q^R\varphi_{x})\frac{v^2}{\kappa R\theta^2}\tilde{q}_{x}\mathrm{d}x-\int\frac{\tau_1v}{\kappa R\theta^2}\left(v_t-\frac{v\theta_t}{\theta}\right)\tilde{q}_{x}^2\mathrm{d}x,\\
			&R_{14}=-\int(v_{x}\tilde{S}+S^R_{x}\varphi+S^R\varphi_{x})\frac{v^2}{\mu R\theta}\tilde{S}_{x}\mathrm{d}x-\int\frac{\tau_2v}{\mu R\theta}\left(v_t-\frac{v\theta_t}{2\theta}\right)\tilde{S}_{x}^2\mathrm{d}x,\\
			&R_{15}=-\int\tau_{1}q^R_{tx}\frac{v^2}{\kappa R\theta^2}\tilde{q}_{x}\mathrm{d}x,\quad
			R_{16}=-\int\tau_{2}S^R_{tx}\frac{v^2}{\mu R\theta}\tilde{S}_{x}\mathrm{d}x,\\
			&R_{17}=\int\left(\frac{v_x}{\theta}-\frac{v\theta_x}{\theta^2}\right)\tilde{\theta}_{x}\psi_{x}-\frac{v}{\theta R}\left(2v_x-\frac{v\theta_x}{\theta}\right)\psi_{x}\tilde{S}_x+\frac{2v}{R\theta^2}\left(v_x-\frac{v\theta_x}{\theta}\right)\tilde{q_{x}}\tilde{\theta}_{x}\mathrm{d}x.
		\end{align*}
		We now proceed to estimate each $R_i$ ($i=1, 2, \cdots, 17$) in turn. First, we have
		\begin{align}\label{3.54}
			\int_{0}^{t}R_{1}\mathrm{d}t&=\int_{0}^{t}\int \left(\frac{v}{R\theta}\left(v_t-\frac{v\theta_t}{2\theta}\right)\psi_{x}^2+\frac{v}{(\gamma-1)\theta^2}\left(v_t-\frac{v\theta_t}{\theta}\right)\tilde{\theta}_{x}^2\right)\mathrm{d}x\mathrm{d}t \nonumber\\
			&\leq C\int_{0}^{t}\int(|\tilde{\theta}_{t}|+|\psi_{x}|+|u^R_{x}|)(\psi_{x}^2+\tilde{\theta}_{x}^2)\mathrm{d}x\mathrm{d}t \nonumber\\
			&\leq CE^{\frac{1}{2}}(t)\int_{0}^{t}\mathcal{D}(t)\mathrm{d}t+C\delta_{R}\int_{0}^{t}\mathcal{D}(t)\mathrm{d}t,
		\end{align}and
		\begin{align}
			\int_{0}^{t}R_{2}\,\mathrm{d}t
			&=\int_{0}^{t}\int \bigg[\left(\frac{\theta_x}{\theta^2}-\frac{2v_x}{v\theta}\right)(\varphi_{x}+v_{x})\psi_{x}+\frac{v_x}{\theta}\psi_x(\tilde{\theta}_{x}+\theta^R_{x})-\frac{v}{\theta}\theta_{xx}^R\psi_{x} \nonumber\\
			&\quad+\frac{(2\theta^R_x v^R_x+\theta^Rv^R_{xx})v^2}{\theta(v^R)^2}\psi_{x}+\frac{2\theta^R(v_x^R)^2v^2}{\theta (v^R)^3}\psi_{x}+\frac{v^2\theta_{xx}^R}{\theta v^R}\psi_{x}\bigg] \,\mathrm{d}x\mathrm{d}t \nonumber\\
			&\leq C\int_{0}^{t} \bigg( \bigl\| \bigl( v^R_{x}, \theta^R_{x}, \tilde{\theta}_{x}, \varphi_{x} \bigr) \bigr\|_{L^\infty}
			\cdot \bigl\| \bigl( \varphi_{x}, \psi_{x}, \tilde{\theta}_{x} \bigr) \bigr\|_{L^2}^2 \nonumber\\
			&\qquad+\left(\| v^R_{x}\|_{L^4}^2+\| \theta^R_{x}\|_{L^4}\| v^R_{x}\|_{L^4}+\| v^R_{xx}\|_{L^2}+\| \theta^R_{xx}\|_{L^2}\right)\| \psi_{x}\|_{L^2}\bigg)\mathrm{d}t \nonumber\\
			&\leq C\left(E^{\frac{1}{2}}(t)\int_{0}^{t}\mathcal{D}(t)\mathrm{d}t+\delta_{R}\int_{0}^{t}\mathcal{D}(t)\mathrm{d}t+\delta_{R}^{\frac{1}{2}}+\delta_{R}^{\frac{3}{2}-\frac{1}{q}}\right),
		\end{align}
		where use Lemma \ref{lem2.1}.	And similarly,
		\begin{equation}
			\begin{aligned}
				\int_{0}^{t}R_{3}\mathrm{d}t&=\int_{0}^{t}\int \mu(\frac{u^R_{x}}{v^R})_{xx}\frac{v^2}{R\theta}\psi_{x}\mathrm{d}x\mathrm{d}t\\
				&\leq C\int_{0}^{t}\int\left(\frac{u^R_{xxx}}{v^R}-\frac{2u^R_{xx}v^R_{x}}{(v^R)^2}-\frac{u^R_{x}v^R_{xx}}{(v^R)^2}+\frac{2u^R_{x}(v^R)^2}{(v^R)^3}\right)\psi_{x}\mathrm{d}x\mathrm{d}t\\
				&\leq C(\delta_{R}^{\frac{3}{2}-\frac{1}{q}}+\delta_{R}^{\frac{1}{2}}).
			\end{aligned}
		\end{equation}
		In view of the bounds $\|\theta_{t}\|_{L^\infty}\leq C(\epsilon_{1}+\delta_{R})$, $\|\tilde{q}_{t}\|_{L^\infty}\leq C(\epsilon_{1}+\delta_{R})$, the term $R_4$ can be estimated as follows
		\begin{equation*}
			\begin{aligned}
				R_{4}&= \int\left[\frac{\tau_{1}(\tilde{q}+q^R)^2}{2\kappa(\tilde{\theta}+\theta^R)^2}\tilde{\theta_{t}}\right]_{x}\frac{v^2}{R\theta^2}\tilde{\theta}_{x}\mathrm{d}x\\
				&\leq C\int \left[\frac{2(\tilde{q}+q^R)(\tilde{q}+q^R)_{x}\tilde{\theta}_{t}\tilde{\theta}_{x}}{(\tilde{\theta}+\tilde{\theta}^R)^2}+\frac{(\tilde{q}+q^R)^2\tilde{\theta}_{tx}\tilde{\theta}_{x}}{(\tilde{\theta}+\tilde{\theta}^R)^2}-\frac{2(\tilde{q}+q^R)^2\tilde{\theta}^2_{x}\tilde{\theta}_{t}}{(\tilde{\theta}+\tilde{\theta}^R)^3}-\frac{2(\tilde{q}+q^R)^2\tilde{\theta}_{t}\theta^R_{x}\tilde{\theta}_{x}}{(\tilde{\theta}+\tilde{\theta}^R)^3}\right]\mathrm{d}x,
			\end{aligned}
		\end{equation*}
		where
		\begin{equation*}
			\begin{aligned}
				\int\frac{(\tilde{q}+q^R)^2\tilde{\theta}_{tx}\tilde{\theta}_{x}}{(\tilde{\theta}+\tilde{\theta}^R)^2}\mathrm{d}x&=\int\frac{\mathrm{d}}{\mathrm{d}t}\frac{(\tilde{q}+q^R)^2\tilde{\theta}^2_{x}}{2(\tilde{\theta}+\theta^R)^2}\mathrm{d}x-\int\frac{(\tilde{q}+q^R)(\tilde{q}+q^R)_{t}\tilde{\theta}^2_{x}}{(\tilde{\theta}+\theta^R)^2}-\frac{(\tilde{q}+q^R)^2(\tilde{\theta}+\theta^R)_{t}\tilde{\theta}^2_{x}}{(\tilde{\theta}+\theta^R)^3}\mathrm{d}x\\
				&\leq \frac{\mathrm{d}}{\mathrm{d}t}\int\frac{(\tilde{q}+q^R)^2\tilde{\theta}^2_{x}}{2(\tilde{\theta}+\theta^R)^2}\mathrm{d}x+\int\bigg|\frac{(\tilde{q}+q^R)(\tilde{q}+q^R)_{t}\tilde{\theta}^2_{x}}{(\tilde{\theta}+\theta^R)^2}+\frac{(\tilde{q}+q^R)^2(\tilde{\theta}+\theta^R)_{t}\tilde{\theta}^2_{x}}{(\tilde{\theta}+\theta^R)^3}\bigg|\mathrm{d}x\\
				&\leq \frac{\mathrm{d}}{\mathrm{d}t}\int\frac{(\tilde{q}+q^R)^2\tilde{\theta}^2_{x}}{2(\tilde{\theta}+\theta^R)^2}\mathrm{d}x+C(\epsilon_{1}+\delta_R)\| \tilde{\theta}_{x}\|_{L^2}^2.
			\end{aligned}
		\end{equation*}
		Combining the above estimates and Lemma \ref{lem2.1}, we can derive
		\begin{equation}
			\begin{aligned}
				\int_{0}^{t}R_{4}\mathrm{d}t&\leq\int\frac{\tau_{1}(\tilde{q}+q^R)^2\tilde{\theta}^2_{x}}{2\kappa(\tilde{\theta}+\theta^R)^2}\bigg|_{0}^{t}\mathrm{d}x+C(\epsilon_{1}+\delta^R)\int_{0}^{t}\| \tilde{q}_{x}\|_{L^2}^2+\| \tilde{\theta}_{x}\|_{L^2}^2\mathrm{d}t\\
				&\quad+C\| \tilde{\theta}_{x}\|_{L^\infty}\| \tilde{\theta}_{t}\|_{L^\infty}\|\theta^R_{x}\|_{L^\infty}\int_{0}^{t}\|\tilde{q}\|_{L^2}^2\mathrm{d}x\mathrm{d}t+\int_{0}^{t}(\|\theta^R_{xx}\|_{L^2}+\|\theta^R_x\|_{L^4}\| v^R_x\|_{L^4}\\
				&\quad+\|\theta^R_x\|_{L^4}\| \theta^R_{xx}\|_{L^4}+\|\theta^R_x\|_{L^6}^2\| v^R_x\|_{L^6}+\|\theta^R_x\|^2_{L^4}+\|\theta^R_x\|^3_{L^6})\|\tilde{\theta}_x\|_{L^2}\mathrm{d}t\\
				&\leq\int\frac{\tau_{1}(\tilde{q}+q^R)^2\tilde{\theta}^2_{x}}{2\kappa(\tilde{\theta}+\theta^R)^2}\bigg|_{0}^{t}\mathrm{d}x+C\left((\epsilon_{1}+\delta_R)\int_{0}^{t}\mathcal{D}(t)\mathrm{d}t+E^{\frac{1}{2}}(t)\int_{0}^{t}\mathcal{D}(t)\mathrm{d}t+\delta_R^{\frac{3}{2}-\frac{1}{q}}+\delta_R^{\frac{1}{2}}\right).
			\end{aligned}
		\end{equation}
		Furthermore,
		\begin{align*}
			R_{5}&=\int\left[\frac{(\tilde{\theta}+\theta^R)_{x}(\tilde{q}+q^R)}{\tilde{\theta}+\theta^R}\right]_{x}\frac{v^2}{R\theta^2}\tilde{\theta}_{x}\mathrm{d}x\\
			&=\int \frac{v^2}{R\theta^2}\bigg(\frac{(\tilde{q}+q^R)}{\tilde{\theta}+\theta^R}\tilde{\theta}_{xx}\tilde{\theta}_{x}+\tilde{\theta}_{x}\frac{\theta^R_{xx}(\tilde{q}+q^R)}{\tilde{\theta}+\theta^R}+\frac{\tilde{\theta}^2_{x}(\tilde{q}+q^R)_{x}}{\tilde{\theta}+\theta^R}+\frac{\tilde{\theta}_{x}(\tilde{q}+q^R)_{x}\theta^R_{x}}{\tilde{\theta}+\theta^R}\\
			&\quad-\frac{\tilde{\theta}^3_{x}(\tilde{q}+q^R)}{(\tilde{\theta}+\theta^R)^2}-\frac{\tilde{\theta}_{x}(\tilde{q}+q^R)(\theta^R_{x})^2}{(\tilde{\theta}+\theta^R)^2}-\frac{(\tilde{\theta}_x^2\theta^R_x(\tilde{q}+q^R))}{(\tilde{\theta}+\theta^R)^2}\bigg)\mathrm{d}x\\
			&\leq C\int \bigg|\frac{(\tilde{q}+q^R)}{\tilde{\theta}+\theta^R}\tilde{\theta}_{xx}\tilde{\theta}_{x}+\tilde{\theta}_{x}\frac{\theta^R_{xx}(\tilde{q}+q^R)}{\tilde{\theta}+\theta^R}+\frac{\tilde{\theta}^2_{x}(\tilde{q}+q^R)_{x}}{\tilde{\theta}+\theta^R}+\frac{\tilde{\theta}_{x}(\tilde{q}+q^R)_{x}\theta^R_{x}}{\tilde{\theta}+\theta^R}\\
			&\quad\quad-\frac{\tilde{\theta}^3_{x}(\tilde{q}+q^R)}{(\tilde{\theta}+\theta^R)^2}-\frac{\tilde{\theta}_{x}(\tilde{q}+q^R)(\theta^R_{x})^2}{(\tilde{\theta}+\theta^R)^2}-\frac{(\tilde{\theta}_x^2\theta^R_x(\tilde{q}+q^R))}{(\tilde{\theta}+\theta^R)^2}\bigg|\mathrm{d}x,\\
		\end{align*}
		where
		\begin{align*}
			\int\frac{\tilde{q}+q^R}{\tilde{\theta}+\theta^R}\tilde{\theta}_{xx}\tilde{\theta}_{x}\mathrm{d}x&\leq\int\left( \frac{(\tilde{q}+q^R)\tilde{\theta}_{x}^2}{2(\tilde{\theta}+\theta^R)}\right)_{x}\mathrm{d}x-\int\left(\frac{\tilde{q}+q^R}{2(\tilde{\theta}+\theta^R)}\right)_{x}\tilde{\theta}^2_{x}\mathrm{d}x\\
			&\leq-\int\frac{\tilde{q}_{x}\tilde{\theta}^2_{x}+q^R_x\tilde{\theta}^2_x}{2(\tilde{\theta}+\theta^R)}-\frac{(\tilde{q}+q^R)(\tilde{\theta}+\theta^R)_{x}\tilde{\theta}^2_{x}}{2(\tilde{\theta}+\theta^R)^2}\mathrm{d}x,
		\end{align*}
		thus, we have
		\begin{align}
			\int_{0}^{t}R_{5}\mathrm{d}t&\leq C\int_{0}^{t}\int|\tilde{q}_{x}\tilde{\theta}^2_{x}|+|q^R_{x}\tilde{\theta}^2_{x}|+|(\tilde{q}+q^R)(\tilde{\theta}+\theta^R)_{x}\tilde{\theta}^2_{x}|+|\tilde{\theta}_{x}\theta^R_{xx}|+|\tilde{\theta}_{x}q^R_{x}\theta^R_{xx}|+|\tilde{\theta}^3_{x}(\tilde{q}+q^R)| \nonumber\\
			&\quad|(\tilde{q}+q^R)_x\tilde{\theta}^2_x|+|\tilde{\theta}_{x}\tilde{q_{x}}\theta^R_x|+|\tilde{\theta}_xq^R_x\theta^R_x|+|\tilde{\theta}_{x}\tilde{q}(\theta^R_{x})^2|+|\tilde{\theta}_{x}q^R(\theta^R_x)^2|+|\theta^R_x(\tilde{q}+q^R)(\tilde{\theta}_{x})^2|\mathrm{d}x\mathrm{d}t \nonumber\\
			&\leq C\left(E^{\frac{1}{2}}(t)\int_{0}^{t}\mathcal{D}(t)\mathrm{d}t+\delta_{R}^2\int_{0}^{t}\mathcal{D}(t)\mathrm{d}t+\delta_{R}^{\frac{3}{2}-\frac{1}{q}}+\delta_{R}^{\frac{1}{2}}\right).
		\end{align}
		We next consider
		\begin{align}
			\int_{0}^{t}R_{6}\mathrm{d}t&=\int_{0}^{t}\int \left[\frac{1}{\theta}(\varphi_x+v^R_x)-\frac{v}{\theta^2}(\tilde{\theta}_{x}+\theta^R_x)\right]\psi_{x}\tilde{\theta}_{x}\mathrm{d}x\mathrm{d}t\nonumber\\
			&\leq C\int_{0}^{t}\int(|\varphi_{x}|+|v^R_{x}|+|\tilde{\theta}_{x}|+|\theta^R_{x}|)(\psi_{x}^2+\tilde{\theta}_{x}^2)\mathrm{d}x\mathrm{d}t\nonumber\\
			&\leq CE^{\frac{1}{2}}(t)\int_{0}^{t}\mathcal{D}(t)\mathrm{d}t+C\delta_{R}\int_{0}^{t}\mathcal{D}(t)\mathrm{d}t,
		\end{align}
		and similarly,
		\begin{align}
			\int_{0}^{t}R_{7}\mathrm{d}t&=-\int_{0}^{t}\int(p-p^R)_{x}u^R_x \frac{v^2}{R\theta^2}\tilde{\theta}_{x}\mathrm{d}x\mathrm{d}t\nonumber\\
			&=\int_{0}^{t}\int \frac{1}{\theta}(\varphi_{x}+v^R_{x})u^R_x \tilde{\theta}_{x}-\frac{v}{\theta^2}(\tilde{\theta}+\theta^R)_xu^R_x\tilde{\theta}_{x}-\frac{v^2\theta^Rv^R_xu^R_x}{(v^R)^2\theta^2}\tilde{\theta}_{x}+\frac{\theta^R_xu^R_xv^2}{v^R\theta^2}\tilde{\theta}_{x}\mathrm{d}x\mathrm{d}t \nonumber\\
			&\leq C\left(\delta_{R}\int_{0}^{t}\mathcal{D}(t)\mathrm{d}t+\delta_{R}^{\frac{1}{2}}\right),
		\end{align}
		\begin{equation}
			\begin{aligned}
				\int_{0}^{t}R_{8}\mathrm{d}t&=-\int_{0}^{t}\int(p-p^R)u^R_{xx}\frac{v^2}{R\theta^2}\tilde{\theta}_{x}
				\mathrm{d}x\mathrm{d}t
				\leq \int_{0}^{t} C\|\tilde{\theta}_{x}\|_{L^2}\| u^R_{xx}\|_{L^2}\mathrm{d}t
				\leq C\left( \delta_{R}^{\frac{3}{2}-\frac{1}{q}}+\delta_{R}^{\frac{1}{2}}\right).
			\end{aligned}
		\end{equation}
		For the term $R_9$,  we expand
		\begin{equation*}
			\begin{aligned}
				\left[\frac{(\tilde{q}+q^R )^2v}{\kappa(\tilde{\theta}+\theta^R)}\right]_{x}
				&=\frac{2\tilde{q}_{x}\tilde{q}v+2\tilde{q}_{x}q^Rv+2\tilde{q}q^R_{x}v+\tilde{q}^2v_{x}+2\tilde{q}q^Rv_{x}+2q^Rq^R_xv+(q^R)^2v_x}{\kappa(\tilde{\theta}+\theta^R)}\\
				&\quad-\frac{(\tilde{q})^2v\tilde{\theta}_{x}+2\tilde{q}q^Rv\tilde{\theta}_{x}+\tilde{q}^2v\theta^R_{x}+2\tilde{q}q^Rv\theta^R_{x}+(q^R)^2v\tilde{\theta}_{x}+(q^R)^2v\theta^R_x}{\kappa(\tilde{\theta}+\theta^R)^2}.
			\end{aligned}
		\end{equation*}
		Recalling $q^R=-\frac{\kappa\theta^R_{x}}{v^R}$, and $q^R_{x}=-\frac{\kappa\theta^R_{xx}}{v^R}+\frac{\kappa\theta^R_{x}v^R_{x}}{(v^R)^2}$, we obtain the estimate
		\begin{align}
			\int_{0}^{t}R_{9}\mathrm{d}t &\leq C\int_{0}^{t}\int \bigl|\tilde{q}_{x}\tilde{q}\tilde{\theta}_{x} +\tilde{q}q^R_{x}\tilde{\theta}_{x}+\tilde{q}_xq^R\tilde{\theta}_{x}+q^Rq^R_x\tilde{\theta}_{x}+\tilde{q}^2\varphi_x\tilde{\theta}_{x}+\tilde{q}^2v^R_x\tilde{\theta}_{x}
			+q^R\varphi_x\tilde{\theta}_x\nonumber\\
			&\qquad\qquad\quad+q^Rv^R_x\tilde{\theta}_{x}+(q^R)^2\varphi_x\tilde{\theta}_{x}+(q^R)^2v^R_x\tilde{\theta}_{x}|\mathrm{d}x\mathrm{d}t\nonumber\\
			&\quad+\int_{0}^{t}\int|\tilde{q}^2\tilde{\theta}_{x}^2+q^R\tilde{\theta}_{x}^2+\tilde{q}^2\theta^R_x\tilde{\theta}_{x}+q^R\theta^R_x\tilde{\theta}_{x}+(q^R)^2\tilde{\theta}_{x}^2+(q^R)^2\theta^R_x\tilde{\theta}_{x}|\mathrm{d}x\mathrm{d}t\nonumber\\
			&\leq C\delta_{R}\int_{0}^{t}\mathcal{D}(t)\mathrm{d}t+C(\delta_{R}^{\frac{3}{2}-\frac{1}{q}}+\delta_{R}^{\frac{1}{2}})+CE^{\frac{1}{2}}(t)\int_{0}^{t}\mathcal{D}(t)\mathrm{d}t+CE(t)\int_{0}^{t}\mathcal{D}(t)\mathrm{d}t.
		\end{align}
		Proceeding analogously,
		\begin{align}
			\int_{0}^{t}R_{10}\mathrm{d}t&=\int_{0}^{t}\int \left[\frac{v}{\mu}(\tilde{S}^2+2S^R\tilde{S}+(S^R)^2)\right]_{x} \frac{v^2}{R\theta^2}\tilde{\theta}_{x}\mathrm{d}x\mathrm{d}t\nonumber\\
			&\leq C\int_{0}^{t}\int\big(\varphi_{x}\tilde{S}^2+v^R_{x}\tilde{S}^2+\varphi_{x}(S^R)^2+v^R_{x}(S^R)^2\nonumber\\
			&\qquad\qquad+2v\tilde{S}\tilde{S}_{x}+2v\tilde{S}_{x}S^R+2\tilde{S}\tilde{S}_{x}+2S^RS^R_{x}\big)\tilde{\theta}_{x}\mathrm{d}x\mathrm{d}t\nonumber\\
			&\leq C(\epsilon_{1}+\delta_{R})\int_{0}^{t}\mathcal{D}(t)\mathrm{d}t+(\delta_{R}^{\frac{3}{2}-\frac{1}{q}}+\delta_{R}^{\frac{1}{2}})+CE^{\frac{1}{2}}(t)\int_{0}^{t}\mathcal{D}(t)\mathrm{d}t,
		\end{align}
		and
		\begin{align}
			\int_{0}^{t}R_{11}\mathrm{d}t&\leq C\int_{0}^{t}\int (\frac{\theta^R_{x}}{v^R})_{xx}\tilde{\theta}_{x}\mathrm{d}x\mathrm{d}t \nonumber\\
			&\leq C\int_{0}^{t}\int\left(\frac{\theta^R_{xxx}}{v^R}-\frac{2\theta^R_{xx}v^R_{x}}{(v^R)^2}-\frac{\theta^R_{x}v^R_{xx}}{(v^R)^2}+\frac{2\theta^R_{x}(v^R_x)^2}{(v^R)^3}\right)\tilde{\theta}_{x}\mathrm{d}x\mathrm{d}t \nonumber\\
			&\leq C(\delta_{R}^{\frac{3}{2}-\frac{1}{q}}+\delta_{R}^{\frac{1}{2}}).
		\end{align}
		We differentiate both sides of equation $(\ref{3.7})_3$ with respect to $x$, which yields
		$\theta^R_{tx}=-(\gamma-1)\left(\frac{\theta^R_xu^R_x}{v^R}+\frac{\theta^Ru^R_{xx}}{v^R}-\frac{\theta^Ru^R_x\theta^R_x}{(v^R)^2}\right)$. Using this identity, the term $R_{12}$ can be estimated as follows
		\begin{align}
			\int_{0}^{t}R_{12}\mathrm{d}t&=\int_{0}^{t}\int\left[\frac{\tau_{1}(\tilde{q}+q^R)^2\theta^R_{t}}{2\kappa(\tilde{\theta}+\theta^R)^2}\right]_{x}\frac{v^2}{R\theta^2}\tilde{\theta}_{x}\mathrm{d}x\mathrm{d}t \nonumber\\
			&\leq C\int_{0}^{t}\int\left(\frac{2(\tilde{q}_{x}+q^R)(\tilde{q}_{x}+q^R)_x\theta^R_{t}+(\tilde{q}_{x}+q^R)^2\theta^R_{tx}}{(\tilde{\theta}+\theta)^2}-\frac{2(\tilde{q}_{x}+q^R)^2\theta^R_{t}(\tilde{\theta}+\theta^R)_{x}}{(\tilde{\theta}+\theta)^3}\right)\tilde{\theta}_{x}\mathrm{d}x\mathrm{d}t \nonumber\\
			&\leq C\delta_{R}\int_{0}^{t}\mathcal{D}(t)\mathrm{d}t+C(\delta_{R}^{\frac{3}{2}-\frac{1}{q}}+\delta_{R}^{\frac{1}{2}}).
		\end{align}
		Next, we obtain
		\begin{align}
			\int_{0}^{t}R_{13}\mathrm{d}t&=-\int_{0}^{t}\int(v_{x}\tilde{q}+q^R_{x}\varphi+q^R\varphi_{x})\frac{v^2}{\kappa R\theta^2}\tilde{q}_{x}\mathrm{d}x\mathrm{d}t-\int_{0}^{t}\int\frac{\tau_1v}{\kappa R\theta^2}\left(v_t-\frac{v\theta_t}{\theta}\right)\tilde{q}_{x}^2\mathrm{d}x\mathrm{d}t\nonumber\\
			&\leq C\int_{0}^{t}\int| \varphi_{x}\tilde{q}\tilde{q}_{x}+v^R_{x}\tilde{q}\tilde{q}_{x}+\theta^R_{xx}\varphi\tilde{q}_{x}+\theta^R_{x}v^R_{x}\varphi\tilde{q}_{x}+\theta^R_{x}\varphi_{x}\tilde{q}_{x}|\mathrm{d}x\mathrm{d}t+CE^{\frac{1}{2}}(t)\int_{0}^{t}\mathcal{D}(t)\mathrm{d}t\nonumber\\
			&\leq C\left(E^{\frac{1}{2}}(t)\int_{0}^{t}\mathcal{D}(t)\mathrm{d}t+\delta_{R}\int_{0}^{t}\mathcal{D}(t)\mathrm{d}t+\delta_{R}^{\frac{3}{2}-\frac{1}{q}}+\delta_{R}^{\frac{1}{2}}\right).
		\end{align}
		By the same argument, we can obtain
		\begin{align}
			\int_{0}^{t}R_{14}\mathrm{d}t&=-\int_{0}^{t}\int(v_{x}\tilde{S}+S^R_{x}\varphi+S^R\varphi_{x})\frac{v^2}{\mu R\theta}\tilde{S}_{x}\mathrm{d}x\mathrm{d}t-\int_{0}^{t}\int\frac{\tau_2v}{\mu R\theta}\left(v_t-\frac{v\theta_t}{2\theta}\right)\tilde{S}_{x}^2\mathrm{d}x\mathrm{d}t\nonumber\\
			&\leq C\left(E^{\frac{1}{2}}(t)\int_{0}^{t}\mathcal{D}(t)\mathrm{d}t+\delta_{R}\int_{0}^{t}\mathcal{D}(t)\mathrm{d}t+\delta_{R}^{\frac{3}{2}-\frac{1}{q}}+\delta_{R}^{\frac{1}{2}}\right).
		\end{align}
		From equation \eqref{3.9}, we derive that
		\begin{align*}
			q^R_{tx}=&-\frac{\kappa\theta^R_{xxt}}{v^R}+\frac{\kappa\theta^R_{xx}u^R_{x}}{(v^R)^2}+\frac{\kappa\theta^R_{xt}v^R_{x}}{(v^R)^2}+\frac{\kappa\theta^R_{x}u^R_{xx}}{(v^R)^2}-\frac{2\kappa\theta^R_{x}v^R_{x}u^R_{x}}{(v^R)^3},\\
			\theta^R_{xt}=&-(\gamma-1)\left(\frac{\theta^R_{x}u^R_{x}}{v^R}+\frac{\theta^Ru^R_{xx}}{v^R}-\frac{\theta^Ru^R_{x}v^R_{x}}{(v^R)^2}\right),\\
			\theta^R_{xxt}=&-(\gamma-1)\bigg(\frac{\theta^R_{xx}u^R_{x}}{v^R}+\frac{2\theta^R_{x}u^R_{xx}}{v^R}-\frac{2\theta^R_{x}u^R_{x}v^R_{x}}{(v^R)^2}+\frac{\theta^Ru^R_{xxx}}{v^R}\\
			&-\frac{2\theta^Ru^R_{xx}v^R_{x}}{(v^R)^2}-\frac{\theta^Ru^R_{x}v^R_{xx}}{(v^R)^2}+\frac{2\theta^Ru^R_{x}(v^R_{x})^2}{(v^R)^3}\bigg).
		\end{align*}
		Substituting these expressions into $R_{15}$ leads to
		\begin{align}\label{4.52}
			\int_{0}^{t}R_{15}\mathrm{d}t&=-\int_{0}^{t}\int\tau_{1}q^R_{tx}\frac{v^2}{\kappa R\theta^2}\tilde{q}_{x}\mathrm{d}x\mathrm{d}t \nonumber\\
			&\leq C\int_{0}^{t}\int|\theta^R_{xx}u^R_{x}+\theta^R_{x}u^R_{xx}+\theta^R_{x}u^R_{x}v^R_{x}+\theta^Ru^R_{xxx}+\theta^Ru^R_{xx}v^R_{x}+\theta^Ru^R_{x}v^R_{xx}+\theta^Ru^R_{x}(v^R_{x})^2| \nonumber\\
			&\qquad\qquad\qquad\qquad\qquad\qquad\qquad\qquad\qquad\qquad\qquad\qquad\qquad\qquad\qquad\quad\qquad
			\cdot|\tilde{q}_{x}|\mathrm{d}x\mathrm{d}t \nonumber\\
			&\leq C(\delta_{R}^{\frac{3}{2}-\frac{1}{q}}+\delta_{R}^\frac{1}{2}).
		\end{align}
		Similar to $R_{15}$, we can obtain
		\begin{equation}
			\begin{aligned}
				\int_{0}^{t}R_{16}\mathrm{d}t=-\int_{0}^{t}\int\tau_{2}S^R_{tx}\frac{v^2}{\mu R\theta}\tilde{S}_{x}\mathrm{d}x\mathrm{d}t
				\leq C(\delta_{R}^{\frac{3}{2}-\frac{1}{q}}+\delta_{R}^\frac{1}{2}).
			\end{aligned}
		\end{equation}
		For the final term $R_{17}$, we have
		\begin{align}\label{3.78}
			\int_{0}^{t}R_{17}\mathrm{d}t&=\int_{0}^{t}\int\left(\left(\frac{v_x}{\theta}-\frac{v\theta_x}{\theta^2}\right)\tilde{\theta}_{x}\psi_{x}-\frac{v}{\theta R}\left(2v_x-\frac{v\theta_x}{\theta}\right)\psi_{x}\tilde{S}_x+\frac{2v}{R\theta^2}\left(v_x-\frac{v\theta_x}{\theta}\right)\tilde{q_{x}}\tilde{\theta}_{x}\right)\mathrm{d}x\mathrm{d}t \nonumber\\
			&\leq C\left(E^{\frac{1}{2}}(t)\int_{0}^{t}\mathcal{D}(t)\mathrm{d}t+\delta_{R}\int_{0}^{t}\mathcal{D}(t)\mathrm{d}t\right).
		\end{align}
		Substituting equations $(\ref{3.54})$-$(\ref{3.78})$ into $(\ref{3.52})$, we obtain the estimate
		\begin{equation}
			\begin{aligned}
				&\|(\varphi_{x},\psi_{x},\tilde{\theta}_{x},\tilde{q}_{x},\tilde{S}_{x})\|^2_{L^2}+\int_{0}^{t}(\|\tilde{q}_{x}\|^2_{L^2}+\|\tilde{S}_{x}\|^2_{L^2})\mathrm{d}t\\
				&\leq C\left(E_0+E^{\frac{1}{2}}(t)\int_{0}^{t}\mathcal{D}(t)\mathrm{d}t+\delta_R\int_{0}^{t}\mathcal{D}(t)\mathrm{d}t+\delta_{R}^{\frac{3}{2}-\frac{1}{q}}+\delta_{R}^\frac{1}{2}\right).
			\end{aligned}
		\end{equation}
		This completes the proof of Lemma \ref{lem4.2}.
	\end{proof}
	Next, we establish the second-order estimates.
	\begin{lemma}\label{lem4.3}
		There exists a constant $C$ such that for $0\le t\le T$, we have,
		\begin{equation}
			\begin{aligned}
				& \left\|\varphi_{x x}\right\|_{L^{2}}^{2}+\left\|\varphi_{x x}\right\|_{L^{2}}^{2}+\left\|\tilde{\theta}_{x x}\right\|_{L^{2}}^{2}+\left\|\tilde{q}_{x x}\right\|_{L^{2}}^{2}+\left\|\tilde{S}_{x x}\right\|_{L^{2}}^{2}+\int_{0}^{t}\left(\left\|\tilde{q}_{x x}\right\|_{L^{2}}^{2}+\left\|\tilde{S}_{x x}\right\|_{L^{2}}^{2}\right) \mathrm{d} t \\
				\leq & C \left(E_{0}+ E^{\frac{1}{2}}(t) \int_{0}^{t} \mathcal{D} (t) \mathrm{d} t+\delta_{R} \int_{0}^{t} \mathcal{D}(t)  \mathrm{d} t+\delta_{R}^{\frac{2}{3}- \frac{1}{q}}+\delta_{R}^{\frac{1}{2}}\right).
			\end{aligned}
		\end{equation}
	\end{lemma}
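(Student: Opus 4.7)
The plan is to mimic the argument of Lemma \ref{lem4.2} one derivative higher. First I would differentiate the reformulated system \eqref{3.48} twice in $x$ to obtain equations for $(\varphi_{xx},\psi_{xx},\tilde\theta_{xx},\tilde q_{xx},\tilde S_{xx})$. This produces an analog of \eqref{3.51}, where the top-order principal part is the same linear structure and the forcing is a long list of lower-order products of $(v,u,\theta,q,S)$-derivatives together with $k$th-order derivatives of the rarefaction profile for $k\le 3$. I would then multiply the $k$th equation by $\varphi_{xx}$, $\tfrac{v^2}{R\theta}\psi_{xx}$, $\tfrac{v^2}{R\theta^2}\tilde\theta_{xx}$, $\tfrac{v^2}{\kappa R\theta^2}\tilde q_{xx}$, and $\tfrac{v^2}{\mu R\theta}\tilde S_{xx}$ respectively, sum, and integrate over $[0,t]\times\mathbb{R}$, integrating by parts in $x$ whenever a third spatial derivative of a perturbation variable appears so as to move one derivative onto a multiplier. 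Exactly as in the first-order estimate, the combinations $v\tilde q+q^R\varphi$ and $v\tilde S+S^R\varphi$ in $(\ref{3.48})_{4,5}$ will yield the key dissipative terms $\int_0^t(\|\tilde q_{xx}\|_{L^2}^2+\|\tilde S_{xx}\|_{L^2}^2)\,\mathrm{d}t$, while the coupling between the momentum, energy, flux and stress equations produces cancellations of the principal top-order cross terms $\tilde S_{xxx}\psi_{xx}$, $\tilde q_{xxx}\tilde\theta_{xx}$, $\psi_{xxx}\tilde S_{xx}$, $\tilde\theta_{xxx}\tilde q_{xx}$.

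Next I would estimate the remainder. These remainders split into three families: (i) genuinely nonlinear terms of the form (perturbation)$\times$(perturbation)$\times$(perturbation derivative) that can be absorbed into $E^{1/2}(t)\int_0^t\mathcal{D}(t)\,\mathrm{d}t$ by Sobolev embedding $H^1\hookrightarrow L^\infty$ together with the a priori bound \eqref{3-6}; (ii) semilinear terms with one factor of a rarefaction-wave derivative, which yield contributions bounded by $\delta_R\int_0^t\mathcal{D}(t)\,\mathrm{d}t$ since $\|(u^R_x,v^R_x,\theta^R_x)\|_{L^\infty}\le C\delta_R$ by Lemma \ref{lem2.1}(1)--(2); and (iii) pure forcing terms depending only on the rarefaction profile (through $Q_1^R$, $Q_2^R$, $q^R_{txx}$, $S^R_{txx}$, $(Q_1^R)_{xx}$, $(Q_2^R)_{xx}$), which after Cauchy--Schwarz produce time integrals of $\|(u^R,v^R,\theta^R)\|_{W^{3,p}}$-type norms and are controlled by $\delta_R^{2/3-1/q}+\delta_R^{1/2}$ via the decay bounds in Lemma \ref{lem2.1}(2). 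To convert the $\theta^R_{xxt}$ and $u^R_{xxt}$ that appear into pure spatial derivatives, I would reuse the Euler identities $\theta^R_t=-\frac{\gamma-1}{R}p^Ru^R_x$ and $u^R_t=-p^R_x$ differentiated suitably, analogously to \eqref{4.17} and the computation around \eqref{4.52}.

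The step I expect to be hardest is controlling the cubic highest-order interactions arising from differentiating the nonlinear source $a'(\theta)q^2\theta_t-\tfrac{2a(\theta)\kappa}{\tau_1}\theta_x q$ and the quadratic right-hand side $\tfrac{2a(\theta)v}{\tau_1}q^2+\tfrac{v}{\mu}S^2$ twice in $x$. These generate terms such as $(\tilde q+q^R)(\tilde q+q^R)_{xx}\tilde\theta_{xx}\tilde\theta_{tx}/(\tilde\theta+\theta^R)^2$ and products of three second-order derivatives. The trick, following the scheme in Lemma \ref{lem4.2}, will be to integrate by parts to transfer a derivative and then generate an auxiliary energy-type quantity $\tfrac{d}{dt}\!\int\!\tfrac{\tau_1(\tilde q+q^R)^2\tilde\theta_{xx}^2}{2\kappa(\tilde\theta+\theta^R)^2}\,\mathrm{d}x$ (plus analogous terms for $\tilde S$), which is controlled by $E(t)$ via $\|q\|_{L^\infty}\lesssim E^{1/2}(t)+\delta_R$, while the leftover cubic pieces are absorbed either into $E^{1/2}(t)\int_0^t\mathcal{D}(t)\,\mathrm{d}t$ or into $(\epsilon_1+\delta_R)\int_0^t\mathcal{D}(t)\,\mathrm{d}t$ using the smallness \eqref{3-6}.

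Finally, combining all estimates and choosing the free $\epsilon$-parameters small enough to absorb the dissipative $\|\tilde q_{xx}\|_{L^2}^2$ and $\|\tilde S_{xx}\|_{L^2}^2$ contributions on the right-hand side into the left-hand side, and invoking Lemma \ref{lem4.2} to bound the first-order derivative terms appearing as lower-order forcing, yields the claimed bound with the stated powers of $\delta_R$. The initial data contribution $E_0$ comes from the boundary term at $t=0$ in the energy identity.
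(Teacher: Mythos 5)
Your proposal is correct and follows essentially the same route as the paper: differentiating \eqref{3.48} twice in $x$, testing with the identical weighted multipliers $\varphi_{xx}$, $\tfrac{v^2}{R\theta}\psi_{xx}$, $\tfrac{v^2}{R\theta^2}\tilde\theta_{xx}$, $\tfrac{v^2}{\kappa R\theta^2}\tilde q_{xx}$, $\tfrac{v^2}{\mu R\theta}\tilde S_{xx}$, extracting the dissipation in $\tilde q_{xx},\tilde S_{xx}$ from the relaxation equations, handling the $\tilde\theta_{txx}$-type terms via the auxiliary quantity $\tfrac{d}{dt}\int\tfrac{\tau_1(\tilde q+q^R)^2\tilde\theta_{xx}^2}{2\kappa(\tilde\theta+\theta^R)^2}\,\mathrm{d}x$, converting $q^R_{txx}$, $S^R_{txx}$ with the Euler identities, and sorting the remainders into $E^{1/2}(t)\int_0^t\mathcal{D}$, $\delta_R\int_0^t\mathcal{D}$, and pure rarefaction-wave forcing controlled by Lemma \ref{lem2.1}. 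This matches the paper's proof of Lemma \ref{lem4.3} in both structure and key devices.
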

	\begin{proof}
		Taking derivative with respect to $x$ twice to equation $(\ref{3.48})$, one get
		\begin{equation}\label{3.81}
			\begin{cases}
				\varphi_{txx}-\psi_{xxx}=0\\
				\psi_{txx}-\frac{R(v_{xx}\tilde{\theta}_x+2v_x\tilde{\theta}_{xx})}{v^2}+\frac{2Rv_x^2\tilde{\theta}_{x}}{v^3}-\frac{R(v_{xx}\theta^R_x+2v_x\theta^R_{xx})}{v^2}+\frac{2Rv_x^2\theta^R_x}{v^3}+\frac{R\tilde{\theta}_{xxx}+R\theta^R_{xxx}}{v}\\
				\quad-\frac{R\theta_{xx}(\varphi_x+v^R_x)+2R\theta_x(\varphi_{xx}+v^R_{xx})}{v^2}+\frac{4R\theta_x(\varphi_x+v^R_x)^2}{ v^3}+\frac{2R\theta v_{xx}(\varphi_x+v^R_x)}{v^3}+\frac{4R\theta v_x(\varphi_{xx}+v^R_{xx})}{v^3}\\
				\quad-\frac{6R\theta v^2_x(\varphi_x+v^R_x)}{v^4}-\frac{R\theta(\varphi_{xxx}+v^R_{xxx})}{v^2}+\frac{3Rv^R_{xx}\theta^R_{xx}+3Rv^R_{x}\theta^R_{xx}}{(v^R)^2}-\frac{6R(v^R_x)^2\theta^R_x}{(v^R)^3}-\frac{R\theta^R_{xxx}}{v^R}\\
				\quad+\frac{R\theta^Rv^R_{xxx}}{(v^R)^2}-\frac{6R\theta ^Rv^R_{xx}v^R_{x}}{(v^R)^3}+\frac{6R\theta^R(v^R_x)^3}{(v^R)^4}=\tilde{S}_{xxx}-(Q^R_{1})_{xx}\\
				\frac{R}{\gamma-1}\tilde{\theta}_{txx}-\left[\frac{\tau_{1}(\tilde{q}+q^R)^2}{2\kappa(\tilde{\theta}+\theta^R)^2}\tilde{\theta}_{t}\right]_{xx}-\left[\frac{(\tilde{\theta}+\theta^R)_{x}(\tilde{q}+q^R)}{\tilde{\theta}+\theta^R}\right]_{xx} +(p\psi_{x})_{xx}\\
				\quad+\left((p-p^R)_{x}u^R_x+(p-p^R)u^R_{xx}\right)_x+\tilde{q}_{xxx}\\
				\quad=\left[\frac{\tau_{1}(\tilde{q}+q^R )^2v}{\kappa(\tilde{\theta}+\theta^R)}\right]_{xx}+\left[\frac{v}{\mu}(\tilde{S}^2+2S^R\tilde{S}+(S^R)^2)\right]_{xx}-(Q^R_{2})_{xx}+\left[\frac{\tau_{1}(\tilde{q}+q^R)^2\theta^R_{t}}{2\kappa(\tilde{\theta}+\theta^R)^2}\right]_{xx}\\
				\tau_{1}\tilde{q}_{txx}+(v\tilde{q}+q^R\varphi)_{xx}+\kappa\tilde{\theta}_{xxx}=-\tau_{1}q^R_{txx}\\
				\tau_{2}\tilde{S}_{txx}+(v\tilde{S}+S^R\varphi)_{xx}-\mu\psi_{xxx}=-\tau_{2}S^R_{txx}
			\end{cases}
		\end{equation}
		Multiplying $(\ref{3.81})_{1,2,3,4,5}$ by $\varphi_{xx}$, $\frac{v^2}{R\theta}\psi_{xx}$, $\frac{v^2}{R\theta^2}\tilde{\theta}_{xx}$, $\frac{v^2}{\kappa R\theta^2}\tilde{q}_{xx}$, $\frac{v^2}{\mu R\theta}\tilde{S}_{xx}$, respectively, and integrating
		the results, we obtain
		\begin{equation}\label{3.82}
			\begin{aligned}
				&\int\left(\frac{1}{2}\varphi_{xx}^2+\frac{v^2}{2R\theta}\psi_{xx}^2+\frac{v^2}{2(\gamma-1)\theta^2}\tilde{\theta}^2_{xx}+\frac{\tau_{1}v^2}{2\kappa R\theta^2}\tilde{q}^2_{xx}+\frac{\tau_{2}v^2}{2\mu R\theta}\tilde{S}^2_{xx}\right)\mathrm{d}x\bigg|_{0}^{t}+\\
				&\int_{0}^{t}\int\frac{v^3}{\kappa R\theta^2}\tilde{q}^2_{xx}\mathrm{d}x\mathrm{d}t+
				\int_{0}^{t}\int\frac{v^3}{\mu R\theta}\tilde{S}^2_{xx}\mathrm{d}x\mathrm{d}t =:\sum\limits_{j=1}^{16}\int_{0}^{t}R_{j}\mathrm{d}t,
			\end{aligned}
		\end{equation}
		where
		\begin{align*}
			R_{1}&=\int\left(\frac{v}{R\theta}\left(v_t-\frac{v\theta_t}{2\theta}\right)\psi_{xx}^2+\frac{v}{(\gamma-1)\theta^2}\left(v_t-\frac{v\theta_t}{\theta}\right)\tilde{\theta}_{xx}^2\right)\mathrm{d}x,\\
			R_{2} &=\int\bigg[\frac{(v_{xx}\tilde{\theta}_x+2v_x\tilde{\theta}_{xx})}{\theta}\psi_{xx}-\frac{2v_x^2\tilde{\theta}_{x}}{\theta v}\psi_{xx}+\frac{(v_{xx}\theta^R_x+2v_x\theta^R_{xx})}{\theta}\psi_{xx}-\frac{2v_x^2\theta^R_x}{\theta v}\psi_{xx}-\frac{v\theta^R_{xxx}}{\theta}\psi_{xx}\\
			&\quad+\frac{\theta_{xx}(\varphi_x+v^R_x)+2\theta_x(\varphi_{xx}+v^R_{xx})}{\theta}\psi_{xx}-\frac{4\theta_x(\varphi_x+v^R_x)^2}{v\theta}-\frac{2 v_{xx}(\varphi_x+v^R_x)}{v}\psi_{xx}\\
			&\quad-\frac{4 v_x(\varphi_{xx}+v^R_{xx})}{v}\psi_{xx}+\frac{6 v^2_x(\varphi_x+v^R_x)}{v^2}\psi_{xx}+v^R_{xxx}\psi_{xx}-\frac{3v^R_{xx}\theta^R_{xx}+3v^R_{x}\theta^R_{xx}}{\theta(v^R)^2}v^2\psi_{xx}\\
			&\quad+\frac{6(v^R_x)^2\theta^R_xv^2\psi_{xx}}{\theta(v^R)^3}+\frac{v^2\theta^R_{xxx}}{\theta v^R}\psi_{xx}-\frac{v^2\theta^Rv^R_{xxx}\psi_{xx}}{\theta(v^R)^2}+\frac{6v^2\theta ^Rv^R_{xx}v^R_{x}\psi_{xx}}{\theta(v^R)^3}-\frac{6v^2\theta^R(v^R_x)^3\psi_{xx}}{\theta(v^R)^4}\bigg]\mathrm{d}x,\\
			R_{3}&=-\int (Q^R_{1})_{xx}\frac{v^2}{R\theta}\psi_{xx}\mathrm{d}x,\qquad R_{4}=\int\left[\frac{\tau_{1}(\tilde{q}+q^R)^2}{2\kappa(\tilde{\theta}+\theta^R)^2}\tilde{\theta}_{t}\right]_{xx}\frac{v^2}{R\theta^2}\tilde{\theta}_{xx}\mathrm{d}x,\\ R_{5}&=\int\left[\frac{(\tilde{\theta}+\theta^R)_{x}(\tilde{q}+q^R)}{\tilde{\theta}+\theta^R}\right]_{xx}\frac{v^2}{R\theta^2}\tilde{\theta}_{xx}\mathrm{d}x,\\
			R_{6} &= -\int\bigg[\frac{2(\varphi_x+v^R_x)^2}{v\theta}\psi_x\tilde{\theta}_{xx}+\frac{v(\tilde{\theta}_{xx}+\theta^R_{xx})}{\theta^2}\psi_x\tilde{\theta}_{xx}-\frac{2(\tilde{\theta}_x+\theta^R_x)v_x}{\theta^2}\psi_x\tilde{\theta}_{xx}-\frac{2(\varphi_x+v^R_x)}{\theta}\psi_{xx}\tilde{\theta}_{xx}\\
			&\quad+\frac{2v(\tilde{\theta}+\theta^R_x)}{\theta^2}\psi_{xx}\tilde{\theta}_{xx}-\frac{(\varphi_{xx}+v^R_{xx})}{\theta}\psi_x\tilde{\theta}_{xx}\bigg]\mathrm{d}x,\\
			R_{7} &= -\int\bigg((p-p^R)_{xx}u^R_x\frac{v^2}{R\theta^2}\tilde{\theta}_{xx}+2(p-p^R)_{x}u^R_{xx}\frac{v^2}{R\theta^2}\tilde{\theta}_{xx}+(p-p^R)u^R_{xxx}\frac{v^2}{R\theta^2}\tilde{\theta}_{xx}\bigg)\mathrm{d}x, \\
			R_{8} &= \int \left( \frac{(\tilde{q} + q^{R})^2 v}{\kappa (\tilde{\theta} + \theta^{R})} \right)_{xx}\frac{v^2}{R\theta^2} \tilde{\theta}_{xx} \mathrm{d}x, \qquad
			R_{9} = \int \left[ \frac{v}{\mu} \left( \tilde{S}^{2} + 2 \tilde{S} S^{R} + (S^{R})^{2} \right) \right]_{xx}\frac{v^2}{R\theta^2} \tilde{\theta}_{xx} \, \mathrm{d}x, \\
			R_{10} &= -\int (Q_{2}^{R})_{xx}\frac{v^2}{R\theta^2}\tilde{\theta}_{xx} \, \mathrm{d}x, 
			\quad\quad	R_{11}=\int\left[\frac{\tau_{1}(\tilde{q}+q^R)^2\theta^R_{t}}{2\kappa(\tilde{\theta}+\theta^R)^2}\right]_{xx}\frac{v^2}{R\theta^2}\tilde{\theta}_{xx}\mathrm{d}x,\\
			R_{12} &= -\int\left(v_{xx}\tilde{q}+2v_{x}\tilde{q}_x+v\tilde{q}_{xx}+q^R_{xx}\varphi+2q^R_x\varphi_{x}+q^R\varphi_{xx}\right)\frac{v^2}{\kappa R\theta^2}\tilde{q}_{xx}\mathrm{d}x\\
			&\quad-\int\frac{\tau_1v}{\kappa R\theta^2}\left(v_t-\frac{v\theta_t}{\theta}\right)\tilde{q}_{xx}^2\mathrm{d}x, \\
			R_{13} &= -\int\left( v_{xx} \tilde{S}  + 2 v_{x} \tilde{S}_{x} +v\tilde{S}_{xx} + S_{xx}^{R} \varphi 
			+ 2S_{x}^{R} \varphi_{x}  +  S^{R} \varphi_{xx} \right)\frac{v^2}{R\mu\theta}\tilde{S}_{xx} \mathrm{d}x\\
			&\quad
			-\int\frac{\tau_2v}{\mu R\theta}\left(v_t-\frac{v\theta_t}{2\theta}\right)\tilde{S}_{xx}^2\mathrm{d}x, \\
			R_{14}&=-\int\tau_{1}q^R_{txx}\frac{v^2}{\kappa R\theta^2}\tilde{q}_{xx}\mathrm{d}x,\qquad
			R_{15}=-\int\tau_{2}S^R_{txx}\frac{v^2}{\mu R\theta}\tilde{S}_{xx}\mathrm{d}x,\\
			R_{16}&=\int\left(\left(\frac{v_x}{\theta}-\frac{v\theta_x}{\theta^2}\right)\tilde{\theta}_{xx}\psi_{xx}-\frac{v}{\theta R}\left(2v_x-\frac{v\theta_x}{\theta}\right)\psi_{xx}\tilde{S}_{xx}+\frac{2v}{R\theta^2}\left(v_x-\frac{v\theta_x}{\theta}\right)\tilde{q}_{xx}\tilde{\theta}_{xx}\right)\mathrm{d}x.
		\end{align*}
		We now proceed to estimate each $R_i$ separately, $i=1, 2, \cdots, 16$. First, we have
		\begin{align}
			\int_{0}^{t}R_{1}\mathrm{d}t&=\int_{0}^{t}\int \frac{v}{R\theta}\left(v_t-\frac{v\theta_t}{2\theta}\right)\psi_{xx}^2+\frac{v}{(\gamma-1)\theta^2}\left(v_t-\frac{v\theta_t}{\theta}\right)\tilde{\theta}_{xx}^2\mathrm{d}x\mathrm{d}t \nonumber\\
			&\leq C\int_{0}^{t}\int(|\tilde{\theta}_{t}|+|\psi_{x}|+|u^R_{x}|)(\psi_{xx}^2+\tilde{\theta}_{xx}^2)\mathrm{d}x\mathrm{d}t \nonumber\\
			&\leq CE^{\frac{1}{2}}(t)\int_{0}^{t}\mathcal{D}(t)\mathrm{d}t+C\delta_{R}\int_{0}^{t}\mathcal{D}(t)\mathrm{d}t.
		\end{align}
		By the positivity of $v,\theta$ and the a priori assumptions, 
		all the coefficient functions in $R_2$ are uniformly bounded.
		Applying Hölder's inequality, Sobolev embedding, and Lemma~\ref{lem2.1},
		we deduce that
		\begin{align*}
			\int_{0}^{t}R_{2}\mathrm{d}t	&\leq C\int_{0}^{t}\|( v^R_{x}, v^R_{xx}, \theta^R_{x},\theta^R_{xx}, \tilde{\theta}_{x}, \tilde{\theta}_{xx}, \varphi_{x})\|_{L^\infty}\cdot\|( \varphi_{xx},\psi_{xx}, \tilde{\theta}_{xx}, \tilde{\theta}_{x}, \varphi_{x})\|^2_{L^2}\mathrm{d}t\\
			&\quad
			+ C\int_0^t
			\Big(
			\|v^R_x\|_{L^6}^3
			+\|v^R_x\|_{L^4}\|v^R_{xx}\|_{L^4}
			+\|\theta^R_x\|_{L^4}\|v^R_{xx}\|_{L^4} 
			+\|v^R_{xxx}\|_{L^2}
			+\|\theta^R_{xxx}\|_{L^2}
			\Big)
			\|\psi_{xx}\|_{L^2}\,\mathrm{d}t\\
			&\leq C\left(E^{\frac{1}{2}}(t)\int_{0}^{t}\mathcal{D}(t)\mathrm{d}t+\delta_{R}\int_{0}^{t}\mathcal{D}(t)\mathrm{d}t+\delta_{R}^{\frac{1}{2}}+\delta_{R}^{\frac{3}{2}-\frac{1}{q}}\right).
		\end{align*}
		For the term $R_3$, we note that
		\begin{align*}
			\left|\left(\frac{u_{x}^{R}}{v^{R}}\right)_{x x x}\right| \leq C\left|u_{x xx x}^{R}-u_{x x x}^{R} v_{x}^{R}-u_{x x}^{R} v_{x x}^{R}+u_{x x}^{R} (v_{x}^{R})^2+u_{x}^{R} v_{x x x}^{R}+u_{x}^{R} v_{x}^{R} v_{x x}^{R}-u_{x}^{R} (v_{x}^{R})^{3}\right| .
		\end{align*}
		Applying Lemma \ref{lem2.1} and H\"{o}lder inequality, we have
		\begin{align}
			\int_{0}^{t} R_{3} \mathrm{d} t& \leq C\int_{0}^{t} \int \left|   \left(\frac{u_{x}^{R}}{v^{R}}\right)_{xxx}\psi_{xx}\right| \mathrm{d}x\mathrm{d}t 
			\leq C\delta_{R}^{\frac{1}{2}}+C\delta_{R}^{\frac{3}{2}-\frac{1}{q}}.
		\end{align}
		Next, we estimate $R_4$. Upon expanding the second spatial derivative, we obtain the identities
		\begin{align*}
			&\left(\frac{(\tilde{q}+q^R)^{2} \tilde{\theta}_{t}}{(\tilde{\theta}+\theta^{R})^{2}}\right)_{xx}\\
			&=\left(\frac{2(\tilde{q}+q^R)(\tilde{q}+q^R)_{x} \tilde{\theta}_{t}}{(\theta^{R}+\tilde{\theta})^{2}}+\frac{(\tilde{q}+q^R)^{2}\tilde{\theta}_{t x}}{(\tilde{\theta}+\theta^{R})^{2}}-\frac{2(\tilde{q}+q^R)^{2} \tilde{\theta}_{t} \tilde{\theta}_{x}}{(\tilde{\theta}+\theta^{R})^{3}}-\frac{2 (\tilde{q}+q^R)^{2} \tilde{\theta}_{t} \theta_{x}^{R}}{(\tilde{\theta}+\theta^{R})^{3}}\right)_{x}\\
			&=\frac{2(\tilde{q}+q^R)_{x}^{2} \tilde{\theta}_{t}+2 (\tilde{q}+q^R) (\tilde{q}+q^R)_{xx} \tilde{\theta}_{t}+4 (\tilde{q}+q^R)(\tilde{q}+q^R)_{x} \tilde{\theta}_{t x}+(\tilde{q}+q^R)^{2} \tilde{\theta}_{t x x}}{(\tilde{\theta}+\theta^{R})^{2}}\\
			&\quad-\frac{2 (\tilde{q}+q^R)^{2} \tilde{\theta}_{tx} \theta_{x}^{R}+8(\tilde{q}+q^R)(\tilde{q}+q^R)_{x}\tilde{\theta}_{t} \tilde{\theta}_{x}+4 (\tilde{q}+q^R)^{2} \tilde{\theta}_{t x} \tilde{\theta}_{x}}{( \tilde{\theta}+\theta^{R})^{3}} \\
			&\quad-\frac{4 (\tilde{q}+q^R)(\tilde{q}+q^R)_{x} \tilde{\theta}_{t} \theta_{x}^{R}+2 (\tilde{q}+q^R)^{2}\tilde{\theta}_{t} \tilde{\theta}_{tx}}{( \tilde{\theta}+\theta^{R})^{3}}\\
			&\quad+\frac{6(\tilde{q}+q^R)^{2} \tilde{\theta}_{t} \tilde{\theta}_{x}(\tilde{\theta}+\theta^{R})_{x}-4 (\tilde{q}+q^R)(\tilde{q}+q^R)_{x}\tilde{\theta}_{t}\theta_{x}^{R}-2 (\tilde{q}+q^R)^{2}\tilde{\theta}_{tx}\theta_{x}^{R}}{(\tilde{\theta}^{2}+\theta^{2})^4}\\
			&\quad-\frac{2 (\tilde{q}+q^R)^{2}\tilde{\theta}_{t}\theta_{xx}^{R}-6(\tilde{q}+q^R) \tilde{\theta}_{t} \theta_{x}^{R}(\tilde{\theta}_{x}+\theta_{x}^{R})}{(\tilde{\theta}^{2}+\theta^{R})^4}.
		\end{align*}
		In the above expansion, the product involving $\tilde{\theta}_{txx}$ can be rewritten as
		\begin{align*}
			&\frac{(\tilde{q}+q^R)^{2} \tilde{\theta}_{txx}}{(\tilde{\theta}+\theta^{R})^{2}} \cdot \tilde{\theta}_{x x}\\
			\quad&=\left(\frac{(\tilde{q}+q^R)^{2} \tilde{\theta}_{x x}^{2}}{2(\tilde{\theta}+\theta^{R})^{2}}\right)_{t}-\frac{(\tilde{q}+q^R) (\tilde{q}+q^R)_{t} \tilde{\theta}_{x x}^{2}}{2\kappa(\tilde{\theta}+\theta^{R})^2}+\frac{ (\tilde{q}+q^R)^{2}(\tilde{\theta}+\theta^{R})_{t} \tilde{\theta}_{x x}^{2}}{2\kappa(\tilde{\theta}+\theta^{R})^{3}}.
		\end{align*}
		Consequently, combining the above expansions and applying H\"{o}lder inequality, we derive
		\begin{align*}
			\int_{0}^{t}R_{4} \mathrm{d}t&\leq C \int_{0}^{t}\int\left(\frac{\tau_{1}(\tilde{q}+q^R)^{2}}{2 \kappa\left(\tilde{\theta}+\theta^{R}\right)^2} \tilde{\theta}_{t}\right)_{xx}\tilde{\theta}_{xx} \mathrm{d}x\mathrm{d}t\\
			&\leq C \int_{0}^{t}\|\big(\tilde{\theta}_{t},\tilde{q}, \tilde{q}_{x}, q^R,q^R_x\big)\|_{L^\infty}
			\cdot\|\big(\tilde{q},\tilde{q}_{x},\tilde{q}_{xx},\tilde{\theta}_{x},\tilde{\theta}_{xx}\big)\|^2_{L^2}\mathrm{d}t\\
			&\quad+C\int_{0}^{t}(\| q^R_x\|^2_{L^4}+\| q^R_{xxx}\|_{L^2}+\| q^R\|_{L^4}\| q^R_{xx}\|_{L^4}+\| q^R_x\|_{L^4}\|\theta^R_x\|_{L^4})\|\tilde{\theta}_{xx}\|_{L^2}\mathrm{d}t\\
			&\quad+\int\frac{\tau_{1}}{2 \kappa}\left(\frac{(\tilde{q}+q^R)^{2} \tilde{\theta}_{x x}^{2}}{(\tilde{\theta}+\theta^{R})^{2}}\right)\bigg|^{t}_{0}\mathrm{d}x\\
			&\leq C \left(E^{\frac{1}{2}}(t) \int_{0}^{t}\mathcal{D}(t) \mathrm{d} t+\delta_{R}\int_{0}^{t} \mathcal{D}(t) \mathrm{d} t
			+\delta_R^{\frac{2}{3}-\frac{1}{q}}+\delta_R^{\frac{1}{2}}\right)
			+\int\frac{\tau_{1}}{2 \kappa}\left(\frac{(\tilde{q}+q^R)^{2} \tilde{\theta}_{x x}^{2}}{(\tilde{\theta}+\theta^{R})^{2}}\right)\bigg|^{t}_{0}\mathrm{d}x.
		\end{align*}
		The estimate for $R_5$ is handled in the same manner as $R_4$. We first expand the second derivative
		\begin{align*}
			&\left(\frac{(\tilde{\theta}+\theta^{R})_{x} (\tilde{q}+q^R)}{(\tilde{\theta}+\theta^{R})}\right)_{x x}\\
			&=\left(\frac{\tilde{\theta}_{xx} (\tilde{q}+q^R)}{\tilde{\theta}+\theta^{R}}+\frac{\theta_{x x}^{R} (\tilde{q}+q^R)}{\tilde{\theta}+\theta^{R}}+\frac{\tilde{\theta}_{x} (\tilde{q}+q^R)_{x}+\theta_{x}^{R} (\tilde{q}+q^R)_{x}}{\tilde{\theta}+\theta^{R}}-\frac{(\tilde{q}+q^R) (\tilde{\theta}+\theta^R)_{x}^{2}}{(\tilde{\theta}+\theta^{R})^{2}}\right)_{x}\\
			&=\frac{\tilde{\theta}_{x x x}( \tilde{q}+q^R)+\tilde{\theta}_{x x} (\tilde{q}+q^R)_{x}}{\tilde{\theta}+\theta^{R}}-3\frac{\tilde{\theta}_{x x} (\tilde{q}+q^R)(\tilde {\theta}+\theta^{R})_{x}}{(\tilde{\theta}+\theta^{R})^{2}}+\frac{\theta_{x x x}^{R} (\tilde{q}+q^R)+\theta_{x x}^{R} (\tilde{q}+q^R)_{x}}{\tilde{\theta}+\theta^{R}}\\
			&\quad-3\frac{\theta_{x x}^{R} (\tilde{q}+q^R)(\tilde{\theta}+\theta^{R})_{x}}{(\tilde{\theta}+\theta^{R})^{2}}+\frac{\tilde{\theta}_{x x} (\tilde{q}+q^R)_{x}+\tilde{\theta}_{x} (\tilde{q}+q^R)_{x x}+\theta_{x x}^{R} (\tilde{q}+q^R)_{x}+\theta_{x}^{R} (\tilde{q}+q^R)_{x x}}{\tilde{\theta}+\theta^{R}} \\
			&\quad -\frac{2\left(\tilde{q}+q^R\right)_{x}\left( \tilde{\theta} + \theta^{R} \right)_{x}^2}{\left( \tilde{\theta} + \theta^{R} \right)^{2}} 
			+ \frac{ 2 \left( \tilde{q}+q^R \right) \left( \tilde{\theta} + \theta^{R} \right)_{x}^3 }{ \left( \tilde{\theta} + \theta^{R} \right)^{3} }
		\end{align*}
		in which
		\begin{align}
			\frac{\tilde{\theta}_{xxx} (\tilde{q}+q^R)}{\tilde{\theta}+\theta^{R}}\tilde{\theta}_{xx}
			=\left(\frac{(\tilde{q}+q^R)\tilde{\theta}_{x x}^{2}}{2(\tilde{\theta}+\theta^{R})}\right)_{x}
			-\frac{(\tilde{q}+q^R)_x \tilde{\theta}_{xx}^{2} }{ 2(\tilde{\theta} + \theta^{R}) }
			+ \frac{  (\tilde{q}+q^R)(\tilde{\theta}+\theta^R)_{x}\tilde{\theta}_{xx}^{2} }{ (\tilde{\theta} + \theta^{R})^2 }\nonumber.
		\end{align}
		Following the same argument as for $R_4$, we finally obtain the estimate
		\begin{align*}
			\int_{0}^{t}R_{5}\mathrm{d}t
			\leq C \left(E^{\frac{1}{2}}(t) \int_{0}^{t} \mathcal{D}(t) \mathrm{d}t+\delta_{R} \int_{0}^{t} \mathcal{D}(t) \mathrm{d}t+\delta_{R}^{\frac{3}{2}-\frac{1}{q}}+\delta_{R}^{\frac{1}{2}}\right).
		\end{align*}
		Furthermore, we have
		\begin{align}
			\int_{0}^{t} R_{6} \,\mathrm{d} t &= -\int_{0}^{t}\int\Biggl[\frac{2(\varphi_x+v^R_x)^2}{v\theta}\psi_x\tilde{\theta}_{xx}
			+\frac{v(\tilde{\theta}_{xx}+\theta^R_{xx})}{\theta^2}\psi_x\tilde{\theta}_{xx}
			-\frac{2(\tilde{\theta}_x+\theta^R_x)v_x}{\theta^2}\psi_x\tilde{\theta}_{xx} \nonumber \\
			&\quad-\frac{2(\varphi_x+v^R_x)}{\theta}\psi_{xx}\tilde{\theta}_{xx}
			+\frac{2v(\tilde{\theta}+\theta^R_x)}{\theta^2}\psi_{xx}\tilde{\theta}_{xx}
			-\frac{(\varphi_{xx}+v^R_{xx})}{\theta}\psi_x\tilde{\theta}_{xx}\Biggr]\mathrm{d}x\mathrm{d}t \nonumber \\
			&\leq C\int_{0}^{t} \bigl\|(\varphi_x, v_x^R, \tilde{\theta}_x, \theta_x^R, v_{xx}^R, \theta_{xx}^R)\bigr\|_{L^{\infty}} \cdot\|\bigl(\psi_x, \tilde{\theta}_{xx},\psi_{xx}\bigr)\|_{L^2}^2 \,\mathrm{d}t \nonumber \\
			&\quad+ C\int_{0}^{t} \bigl\|(\psi_{xx}, \tilde{\theta}_{xx})\bigr\|_{L^{\infty}}
			\bigl(\|\varphi_{xx}\|_{L^2}^2 + \|\tilde{\theta}_{xx}\|_{L^2}^2\bigr) \,\mathrm{d}t \nonumber \\
			&\leq C\left(E^{\frac{1}{2}}(t) \int_{0}^{t} \mathcal{D}(t) \,\mathrm{d}t+\delta_R \int_{0}^{t} \mathcal{D}(t) \,\mathrm{d}t\right).
		\end{align}
		We now estimate $R_7$, some tedious calculations give
		\begin{align*}
			(p-p^R)_x\frac{2v^2}{R\theta^2}u^R_{xx}\tilde{\theta}_{xx}=-\frac{1}{\theta}(\varphi_{x}+v^R_{x})u^R_{xx} \tilde{\theta}_{xx}+\frac{v}{\theta^2}(\tilde{\theta}+\theta^R)_xu^R_{xx}\tilde{\theta}_{xx}+\frac{v^2\theta^Rv^R_xu^R_{xx}}{(v^R)^2\theta^2}\tilde{\theta}_{xx}-\frac{\theta^R_xu^R_{xx}v^2}{v^R\theta^2}\tilde{\theta}_{xx},
		\end{align*}
		\begin{align*}
			(p-p^R)_{xx}&=-\left(\frac{\theta_x}{v^2}-\frac{2\theta v_x}{v^3}\right)(\varphi_x+v^R_x)-\frac{R\theta}{v^2}(\varphi_{xx}+v^R_{xx})-\frac{Rv_x}{v^2}(\tilde{\theta}_x+\theta^R_x)+\frac{R}{v}(\tilde{\theta}_{xx}+\theta^R_{xx})\\
			&+\frac{2R\theta_x^Rv_x^R+R\theta^Rv^R_{xx}}{(v^R)^2}-\frac{2R\theta^R(v_x^R)^2}{(v^R)^3}-\frac{R\theta_{xx}^R}{v^R}.
		\end{align*}
		Substituting these into the expression of $R_7$ and applying Lemma \ref{lem2.1} along with Sobolev and H\"{o}lder inequalities, we derive that
		\begin{align*}
			\int_{0}^{t} R_{7}\,\mathrm{d}t 
			&= -\int_{0}^{t}\int\left((p-p^R)_{xx}u^R_x\frac{v^2}{R\theta^2}\tilde{\theta}_{xx}+2(p-p^R)_{x}u^R_{xx}\frac{v^2}{R\theta^2}\tilde{\theta}_{xx}+(p-p^R)u^R_{xxx}\frac{v^2}{R\theta^2}\tilde{\theta}_{xx}\right) \mathrm{d}x\mathrm{d}t\\
			&\leq C\int_{0}^{t} \bigl\| \bigl( u^R_x, \varphi_x, \varphi_x^2, u^R_{xx} \bigr) \bigr\|_{L^\infty}
			\cdot \bigl\| \bigl( \varphi_{xx}, \varphi_x, \tilde{\theta}_{xx}, \tilde{\theta}_x \bigr) \bigr\|_{L^2}^2 \,\mathrm{d}t\\
			&\quad+C\int_{0}^{t}\Bigl(\|v_x^R\|_{L^6}\|\theta^R_x\|_{L^6}\|u_x^R\|_{L^6}
			+\|\theta_{xx}^R\|_{L^4}\|u_x^R\|_{L^4}+\|\theta^R_x\|^2_{L^6}\|u_x^R\|_{L^6} \\
			&\qquad\qquad\quad
			+\bigl(\|v_x^R\|_{L^4}+\|\theta_x^R\|_{L^4}\bigr)\|u_{xx}^R\|_{L^4} +\|u_{xxx}^R\|_{L^2}\Bigr)\|\tilde{\theta}_{xx}\|_{L^2}\,\mathrm{d}t \\
			&\leq C \left(E^{\frac{1}{2}}(t) \int_{0}^{t} \mathcal{D}(t) \,\mathrm{d} t+\delta_{R} \int_{0}^{t} \mathcal{D}(t) \,\mathrm{d} t+\delta_{R}^{\frac{3}{2}-\frac{1}{q}}+\delta_{R}^{\frac{1}{2}}\right).
		\end{align*}
		Following a similar procedure, we obtain the estimates for $R_8$ and $R_9$, 
		\begin{align*}
			\int_{0}^{t}R_{8}\mathrm{d}t &=\int_{0}^{t}\int\left( \frac{(\tilde{q} + q^{R})^2 v}{\kappa (\tilde{\theta} + \theta^{R})} \right)_{xx}\frac{v^2}{R\theta^2} \tilde{\theta}_{xx} \mathrm{d}x\mathrm{d}t\\
			&\leq C \left(E^{\frac{1}{2}}(t) \int_{0}^{t} D(t) \mathrm{d} t+ E(t) \int_{0}^{t} D(t) \mathrm{d} t+\delta_{R}^{\frac{3}{2}-\frac{1}{q}}+\delta_{R}^{\frac{1}{2}}\right),
		\end{align*}
		and	
		\begin{align*}
			\int_{0}^{t} R_{9} \mathrm{d}t&=\int_{0}^{t}\int \left[ \frac{v}{\mu} \left( \tilde{S}^{2} + 2 \tilde{S} S^{R} + (S^{R})^{2} \right) \right]_{xx}\frac{v^2}{R\theta^2} \tilde{\theta}_{xx}  \mathrm{d}x\mathrm{d}t\\
			&\leq C \left(E^{\frac{1}{2}}(t) \int_{0}^{t} \mathcal{D}(t) \mathrm{d}t+\delta_{R}^{2} \int_{0}^{t}\mathcal{D}(t) \mathrm{d}t+\delta_{R}^{\frac{1}{2}}+\delta_{R}^{\frac{5}{2} - \frac{1}{q}}\right).
		\end{align*}
		The term $R_{10}$ is estimated similarly to $R_3$, yielding	
		\begin{align}
			\int_{0}^{t} R_{10} \mathrm{d}t \leq C\int_{0}^{t} \int \left(\frac{\theta_{x}^{R}}{v^{R}}\right)_{x x x} \tilde{\theta}_{x x} \mathrm{d} x 
			\leq C\delta_{R}^{\frac{1}{2}}+C\delta_{R}^{\frac{3}{2}-\frac{1}{q}}.
		\end{align}
		For term \(R_{11}\), we first note the following identities:
		$$
		\theta^R_{xt}=-(\gamma-1)\left(\frac{\theta^R_{x}u^R_{x}}{v^R}+\frac{\theta^Ru^R_{xx}}{v^R}-\frac{\theta^Ru^R_{x}v^R_{x}}{(v^R)^2}\right),
		$$
		\begin{align*}
			\theta^R_{xxt}
			&=-(\gamma-1)\bigg(\frac{\theta^R_{xx}u^R_{x}}{v^R}+\frac{\theta^R_{x}u^R_{xx}}{v^R}-\frac{\theta^R_{x}u^R_{x}v^R_{x}}{(v^R)^2}+\frac{\theta^R_{x}u^R_{xx}}{v^R}+\frac{\theta^Ru^R_{xxx}}{v^R}\\
			&\quad-\frac{\theta^Ru^R_{xx}v^R_{x}}{(v^R)^2}-\frac{\theta^R_{x}u^R_{x}v^R_{x}}{(v^R)^2}-\frac{\theta^Ru^R_{xx}v^R_{x}}{(v^R)^2}-\frac{\theta^Ru^R_{x}v^R_{xx}}{(v^R)^2}+\frac{2\theta^Ru^R_{x}(v^R_{x})^2}{(v^R)^3}\bigg),
		\end{align*}
		as well as the expansion
		\begin{align*}
			&\left[\frac{(\tilde{q}+q^R)^2\theta^R_{t}}{(\tilde{\theta}+\theta^R)^2}\right]_{xx}\\
			&=\left(\frac{2(\tilde{q}+q^R)(\tilde{q}+q^R)_x\theta^R_{t}+(\tilde{q}+q^R)^2\theta^R_{tx}}{(\tilde{\theta}+\theta^R)^2}-\frac{2(\tilde{q}+q^R)^2\theta^R_{t}(\tilde{\theta}+\theta^R)_x}{(\tilde{\theta}+\theta^R)^3}\right)_{x}\\
			&=\frac{2(\tilde{q}+q^R)_x^2\theta^R_t+2(\tilde{q}+q^R)(\tilde{q}+q^R)_{xx}\theta^R_t+4(\tilde{q}+q^R)(\tilde{q}+q^R)_x\theta^R_{tx}+(\tilde{q}+q^R)^2\theta^R_{txx}}{(\tilde{\theta}+\theta^R)^2}\\
			&\quad-\frac{4(\tilde{q}+q^R)(\tilde{q}+q^R)_x\theta^R_t(\tilde{\theta}+\theta^R)_x+2(\tilde{q}+q^R)^2\theta^R_{tx}(\tilde{\theta}+\theta^R)_x+(\tilde{q}+q^R)^2\theta^R_t(\tilde{\theta}+\theta^R)_{xx}}{(\tilde{\theta}+\theta^R)^3}\\
			&\quad+\frac{3(\tilde{q}+q^R)^2\theta^R_t(\tilde{\theta}+\theta^R)_x^2}{(\tilde{\theta}+\theta^R)^4}.
		\end{align*}
		Substituting these identities into \(R_{11}\) and invoking Lemma \ref{lem2.1} together with Sobolev and Hölder's inequalities, we deduce that
		\begin{align}
			\int_{0}^{t}R_{11}\mathrm{d}t\leq C\int_{0}^{t}\int\left|\left(\frac{(\tilde{q}+q^R)^2\theta^R_{t}}{(\tilde{\theta}+\theta^R)^2}\right)_{xx}\tilde{\theta}_{xx}\right|\mathrm{d}x\mathrm{d}t
			\leq C\left(E(t)\int_{0}^{t}\mathcal{D}(t)\mathrm{d}t+\delta_{R}\int_{0}^{t}\mathcal{D}(t)\mathrm{d}t+\delta_R^{\frac{1}{2}}\right).
		\end{align}
		For the term \(R_{12}\), we first recall the identities for the derivatives of the rarefaction wave flux \(q^R\):
		$$
		q_{x}^{R}=-\frac{\kappa \theta_{x x}^{R}}{v^{R}}+\frac{\kappa (\theta_x^R)^2 v_{x}^{R}}{\left(v^{R}\right)^{2}}, \quad 
		q_{x x}^{R}=-\frac{\kappa \theta_{x x x}^{R}}{v^{R}}+\frac{\kappa \theta_{x x}^{R} v_{x}^{R}}{\left(v^{R}\right)^{2}}+\frac{\kappa \theta_{x x}^{R} v_{x}^{R}+\kappa \theta_{x}^{R} v_{x x}^{R}}{\left(v^{R}\right)^{2}}-\frac{2\kappa \theta_{x}^{R}\left(v_{x}^{R}\right)^{2}}{\left(v^{R}\right)^{3}}.
		$$
		Invoking Lemma \ref{lem2.1} together with Hölder's inequality, we deduce:
		\begin{align*}
			\int_{0}^{t} R_{12}\,\mathrm{d}t
			&= -\int_{0}^{t}\int\bigg(v_{xx}\tilde{q}+2v_{x}\tilde{q}_x+v\tilde{q}_{xx}+q^R_{xx}\varphi+2q^R_x\varphi_{x}+q^R\varphi_{xx}\bigg)\frac{v^2}{\kappa R\theta^2}\tilde{q}_{xx}\mathrm{d}x\mathrm{d}t\\
			&\quad-\int_{0}^{t}\int\frac{\tau_1v}{\kappa R\theta^2}\left(v_t-\frac{v\theta_t}{\theta}\right)\tilde{q}_{xx}^2\mathrm{d}x\mathrm{d}t \\
			&\leq C \int_{0}^{t} \bigl\| \bigl( \tilde{q}, \tilde{q}_x, v_{xx}^R, \theta_{xx}^R, \theta_x^R v_x^R \bigr) \bigr\|_{L^\infty}
			\cdot \bigl\| \bigl( \varphi_{xx}, \tilde{q}_{xx}, \tilde{q}_x \bigr) \bigr\|_{L^2}^2 \,\mathrm{d}t 
			 \\
			&\quad+ C\int_{0}^{t}\Bigl(\|\tilde{q}\|_{L^{\infty}}\|v_{xx}^{R}\|_{L^{2}}
			+\|\theta_{xx}^{R}\|_{L^{2}}
			+\|\theta_{xx}^{R}\|_{L^{4}}\|v_{x}^{R}\|_{L^{4}}
			+\|\theta_{x}^{R}\|_{L^{4}}\|v_{xx}^{R}\|_{L^{4}} \\
			&\qquad\qquad\quad
			+\|\theta_{x}^{R}\|_{L^{4}}\|(v_{x}^{R})^{2}\|_{L^{4}}\Bigr)
			\|\tilde{q}_{xx}\|_{L^2} \,\mathrm{d}t
			+C E^{\frac{1}{2}}(t) \int_{0}^{t}\mathcal{D}(t) \,\mathrm{d}t \\
			&\leq C \left(E^{\frac{1}{2}}(t) \int_{0}^{t}\mathcal{D}(t)\mathrm{d}t+\delta_{R} \int_{0}^{t} \mathcal{D}(t) \,\mathrm{d}t+\delta_{R}^{\frac{3}{2}-\frac{1}{q}}+\delta_{R}^{\frac{1}{2}}\right).
		\end{align*}
		By the same argument, we get
		\begin{equation}
			\begin{aligned}
				\int_{0}^{t} R_{13} d t\leq C \left(E^{\frac{1}{2}}(t) \int_{0}^{t}\mathcal{D}(t)\mathrm{d}t+\delta_{R} \int_{0}^{t} \mathcal{D}(t) \mathrm{d} t+\delta_{R}^{\frac{3}{2}-\frac{1}{q}}+\delta_{R}^{\frac{1}{2}}\right).
			\end{aligned}
		\end{equation}
		On the other hand, we recall the identity for the mixed third-order derivative of \(q^R\):
		\begin{align*}
			q^R_{txx}
			&=\kappa(\gamma-1)\bigg(\frac{\theta^R_{xxx}u_x^R+2\theta^R_{xx}u^R_{xx}}{(v^R)^2}-\frac{4\theta^R_{xx}u^R_xv^R_x}{(v^R)^3}+\frac{2\theta^R_xu^R_{xxx}}{(v^R)^2}-\frac{5\theta^R_xu^R_{xx}v^R_x+2\theta^R_xu^R_xv^R_{xx}}{(v^R)^3}\\
			&\quad+\frac{8\theta^R_xu^R_x(v^R_x)^2}{(v^R)^4}+\frac{\theta^Ru^R_{xxxx}}{(v^R)^2}-\frac{3\theta^Ru^R_{xxx}v^R_x}{(v^R)^3}-\frac{\theta^Ru^R_{xx}v^R_{xx}}{(v^R)^3}+\frac{5\theta^Ru^R_{xx}(v^R_x)^2}{(v^R)^4}\\
			&\quad+\frac{4\theta^Ru^R_xv^R_xv^R_{xx}}{(v^R)^4}-\frac{8\theta^Ru^R_x(v^R_x)^3}{(v^R)^5}\bigg),
		\end{align*}
		thus, 
		\begin{align}
			\int_{0}^{t}R_{14}\mathrm{d}t&=-\int_{0}^{t}\int\tau_{1}q^R_{txx}\frac{v^2}{\kappa R\theta^2}\tilde{q}_{xx}\mathrm{d}x\mathrm{d}t\nonumber\\
			&\leq C\int_{0}^{t}\big(\|\theta^R_{xxx}\|_{L^4}\| u^R_x\|_{L^4}+\|\theta^R_{xx}\|_{L^4}\| u^R_x\|_{L^4}+\|\theta^R_{xx}\|_{L^6}\| u^R_x\|_{L^6}\| v^R_x\|_{L^6}\nonumber\\
			&\qquad\qquad+\| \theta^R_{x}\|_{L^8}\| v^R_{x}\|_{L^8}^2\| u^R_{x}\|_{L^8}+\| u^R_{xxxx}\|_{L^2}\big)\| \tilde{q}_{xx}\|_{L^2}\mathrm{d}t\nonumber\\
			&\leq C\delta_{R}^{\frac{3}{2}-\frac{1}{q}}+C\delta_{R}^{\frac{1}{2}}.
		\end{align}
		Similarly, we have
		\begin{equation*}
			\begin{aligned}
				\int_{0}^{t}R_{15}\mathrm{d}t=-\int_{0}^{t}\int\tau_{2}S^R_{txx}\frac{v^2}{\mu R\theta}\tilde{S}_{xx}\mathrm{d}x\mathrm{d}t
				\leq C\delta_{R}^{\frac{3}{2}-\frac{1}{q}}+C\delta_{R}^{\frac{1}{2}}.
			\end{aligned}
		\end{equation*}
		Finally, for the last term $R_{16}$, we derive that
		\begin{align}
			\int_{0}^{t} R_{16} \mathrm{d}t&=\int_{0}^{t}\int\bigg[\left(\frac{v_x}{\theta}-\frac{v\theta_x}{\theta^2}\right)\tilde{\theta}_{xx}\psi_{xx}-\frac{v}{\theta R} \left(2v_x-\frac{v\theta_x}{\theta}\right)\psi_{xx}\tilde{S}_{xx}\nonumber\\& \qquad+\frac{2v}{R\theta^2}\left(v_x-\frac{v\theta_x}{\theta}\right)\tilde{q}_{xx}\tilde{\theta}_{xx}\bigg]\mathrm{d}x\mathrm{d}t
			\leq C E^{\frac{1}{2}}(t) \int_{0}^{t}\mathcal{D}(t) \mathrm{d} t+C\delta_{R} \int_{0}^{t} \mathcal{D}(t) \mathrm{d} t.
		\end{align}
		
		In summary, combining all the above estimates with \eqref{3.82}, we obtain 
		\begin{align}
			& \left\|\varphi_{x x}\right\|_{L^{2}}^{2}+\left\|\varphi_{x x}\right\|_{L^{2}}^{2}+\left\|\tilde{\theta}_{x x}\right\|_{L^{2}}^{2}+\left\|\tilde{q}_{x x}\right\|_{L^{2}}^{2}+\left\|\tilde{S}_{x x}\right\|_{L^{2}}^{2}+\int_{0}^{t}\left(\left\|\tilde{q}_{x x}\right\|_{L^{2}}^{2}+\left\|\tilde{S}_{x x}\right\|_{L^{2}}^{2}\right)\mathrm{d}t \nonumber\\
			\leq &C \left(E_{0}+ E^{\frac{1}{2}}(t) \int_{0}^{t} \mathcal{D} (t) \mathrm{d} t+\delta_R\int_{0}^{t} \mathcal{D}(t)  \mathrm{d} t+\delta_{R}^{\frac{1}{2}}+\delta_{R}^{\frac{2}{3}- \frac{1}{q}}\right).
		\end{align}
		
		Therefore, the proof of this lemma is completed.
	\end{proof}
	
	\begin{lemma}
		There exists a constant $C$ such that for any $0\le t\le T$, we have
		\begin{align}
			\int_{0}^{t}\int\left(\varphi_{x}^2+\tilde{\theta}_{x}^2+\psi_{x}^2\right)\mathrm{d}x\mathrm{d}t&\leq C\left(E_0+E^\frac{1}{2}(t)\int_{0}^{t}\mathcal{D}(t)\mathrm{d}t+\delta_{R}\int_{0}^{t}\mathcal{D}(t)\mathrm{d}t+\delta_R^{\frac{1}{2}}+\delta_R^{\frac{3}{2}-\frac{1}{q}}\right).
		\end{align}
	\end{lemma}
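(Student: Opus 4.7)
The goal is to recover the missing components of the dissipation functional $\mathcal{D}(t)$, namely the in-time $L^2$ bounds on $\varphi_x$, $\psi_x$, and $\tilde\theta_x$, since Lemmas~\ref{lem4.1}--\ref{lem4.3} already supply bounds on $(\tilde q,\tilde S)\in H^2$ in time, on the weighted term $|v^R_x||(\varphi,\tilde\theta)|^2$, and on the pointwise $H^2$-norms. My plan is to exploit the two relaxation (constitutive) equations and the momentum equation as ``damped elliptic'' identities that trade a time derivative of the stress/heat-flux perturbation for a spatial derivative of the velocity/temperature perturbation.

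First, to produce $\tilde\theta_x^2$ I would multiply the Cattaneo equation $(\ref{3.10})_4$, rewritten as $\tau_1\tilde q_t+v\tilde q+q^R\varphi+\kappa\tilde\theta_x=-\tau_1 q^R_t$, by $\tilde\theta_x$ and integrate over $[0,t]\times\mathbb R$. The only delicate term is $\int\int\tau_1\tilde q_t\tilde\theta_x$, which I would handle by integration by parts in $t$ followed by integration by parts in $x$: this yields the boundary term $\tau_1\int\tilde q\tilde\theta_x\,\mathrm{d}x|_0^t$ (controlled by $E_0+E(t)\lesssim E_0+E^{1/2}(t)\cdot E^{1/2}(t)$) plus $\int_0^t\int\tau_1\tilde q_x\tilde\theta_t\,\mathrm{d}x\mathrm{d}t$. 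The factor $\tilde\theta_t$ is then replaced using $(\ref{3.10})_3$, whose linear principal part is $-\frac{\gamma-1}{R}(p\psi_x+\tilde q_x)$, so after a Cauchy--Schwarz with small parameter I obtain a bound of the form $C_\varepsilon\int_0^t(\|\tilde q_x\|_{L^2}^2+\|\tilde q\|_{L^2}^2+\|\varphi\|_{L^2}^2)\,\mathrm{d}t+\varepsilon\int_0^t\|\psi_x\|_{L^2}^2\,\mathrm{d}t+C\delta_R^{1/2}$, where the first three quantities already belong to $\mathcal{D}(t)$ and the last comes from the source terms $q^R$, $q^R_t$ via Lemma~\ref{lem2.1}. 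The analogous multiplication of $(\ref{3.10})_5$ by $\psi_x$ produces $\psi_x^2$ dissipation, with $\tilde S_t\psi_x$ treated by the same time/space integration-by-parts trick; the cross term $\int\int\tilde S_x(p-p^R)_x$ expands into $\tilde S_x\varphi_x$ and $\tilde S_x\tilde\theta_x$ pieces that are again absorbed by Cauchy with small constants.

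Finally, for $\varphi_x^2$ I multiply the momentum equation $\psi_t+(p-p^R)_x=\tilde S_x-Q_1^R$ by $-\varphi_x$. Using $\varphi_t=\psi_x$, the time-derivative term becomes $-\int\int\psi_t\varphi_x=-\frac{d}{dt}\int\psi\varphi_x+\int\int\psi\psi_{xx}=-\frac{d}{dt}\int\psi\varphi_x-\int\int\psi_x^2$, so after rearranging one gets $\int_0^t\int\frac{R\theta}{v^2}\varphi_x^2\,\mathrm{d}x\mathrm{d}t$ controlled by $\int_0^t\|\psi_x\|_{L^2}^2\,\mathrm{d}t$, a boundary term bounded by $E_0+E(t)$, small cross terms with $\tilde\theta_x$, $\tilde S_x$, and $O(\delta_R^{1/2}+\delta_R^{3/2-1/q})$ contributions from $v^R_x,\theta^R_x,Q_1^R$. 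Summing the three identities with appropriate weights and choosing $\varepsilon$ small (depending only on the lower bounds of $v,\theta$), the $\varepsilon\varphi_x^2+\varepsilon\psi_x^2+\varepsilon\tilde\theta_x^2$ pieces on the right are absorbed into the left, and the remaining right-hand side fits into the template $E_0+E^{1/2}(t)\int_0^t\mathcal{D}\,\mathrm{d}t+\delta_R\int_0^t\mathcal{D}\,\mathrm{d}t+\delta_R^{1/2}+\delta_R^{3/2-1/q}$. The main obstacle is the tight circular coupling among the three dissipations through the time-derivative manipulations and through substituting the momentum equation into the $\tilde S_x\psi_t$ term; closing it requires carefully tracking which cross terms carry the small factor $\varepsilon$ (absorbable) versus the small factor $\delta+E(t)^{1/2}$ (absorbable under \eqref{3-6}), and relying on Lemma~\ref{lem2.1} to dispose of every product of rarefaction-wave derivatives at the sharp rate $\delta_R^{1/2}$ or better.
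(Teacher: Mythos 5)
Your proposal follows essentially the same route as the paper: the paper also tests the Cattaneo equation $(\ref{3.48})_4$ with $\tilde\theta_x$, the momentum equation $(\ref{3.48})_2$ with $\varphi_x$, and the Maxwell equation $(\ref{3.48})_5$ with $\psi_x$, handles the $\tilde q_t\tilde\theta_x$ and $\tilde S_t\psi_x$ terms by the same time-then-space integration by parts with the $\tilde\theta$- and $\psi$-equations substituted for $\tilde\theta_t$, $\psi_t$, and closes the circular coupling ($\varphi_x$ needs $\int\psi_x^2$, $\psi_x$ needs $\epsilon\int\varphi_x^2$, etc.) by exactly the $\varepsilon$-absorption you describe. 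One correction: $\|\varphi\|_{L^2}^2$ is \emph{not} a component of $\mathcal{D}(t)$, and no time-integrated bound on it is available; the term $q^R\varphi\tilde\theta_x$ is harmless only because $|q^R|^2\lesssim\delta_R|v^R_x|$ (via $q^R=-\kappa\theta^R_x/v^R$ and Lemma \ref{lem2.1}), so after Cauchy it lands in the weighted dissipation $\delta_R\int|v^R_x||\varphi|^2\le\delta_R\,\mathcal{D}(t)$, as in the paper's estimate \eqref{4.72} — you must keep that weight rather than discard it.
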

	\begin{proof}
		Multiplying $(\ref{3.48})_{4}$ by $\tilde{\theta}_{x}$, and integrating the results over $[0,t]\times\mathbb{R}$, we can obtain
		\begin{equation}
			\begin{aligned}\label{4.62}
				\int_{0}^{t}\int\kappa\tilde{\theta}_x^2\mathrm{d}x\mathrm{d}t= \int_{0}^{t}\int\left(-	\tau_1q^R_t\tilde{\theta}_x-\tau_1\tilde{q}_t\tilde{\theta}_x - vq\tilde{\theta}_x+v^{R}q^{R}\tilde{\theta}_x\right)\mathrm{d}x\mathrm{d}t.
			\end{aligned}
		\end{equation}
		We now estimate the terms on the right‑hand side of \eqref{4.62}. First, from equation \eqref{3.9}, we derive
		$$q^R_t=\kappa(\gamma-1)\left(\frac{\theta^R_xu^R_x}{(v^R)^2}+\frac{\theta^Ru^R_{xx}}{(v^R)^2}-\frac{\theta^Ru^R_xv^R_x}{(v^R)^3}\right)+\frac{\kappa\theta^R_xu^R_x}{(v^R)^2}.$$
		Using the above expression and $\epsilon$-Young inequality, we get
		\begin{align*}
			\int_{0}^{t}\int|-\tau_1q^R_t\tilde{\theta}_x|\mathrm{d}x\mathrm{d}t&\leq\int_{0}^{t}\int\epsilon(-\tau_1\tilde{\theta})^2\mathrm{d}x\mathrm{d}t+C(\epsilon)\int_{0}^{t}\int(q^R_t)^2\mathrm{d}x\mathrm{d}t\\
			&\leq\int_{0}^{t}\int\epsilon(-\tau_1\tilde{\theta}_x)^2\mathrm{d}x\mathrm{d}t+C(\delta_R^{3-\frac{2}{q}}+\delta_R).
		\end{align*}	
		Second, using
		\begin{align*}
			&\tilde{\theta}_t=\frac{\gamma-1}{R}(\frac{q^2v}{\kappa\theta}+\frac{S^2v}{\mu}-Q_2^R+\frac{\tau_1q^2\theta_t}{2\kappa\theta^2}+\frac{q\theta_x}{\theta}-pu_x+p^Ru^R_x-\tilde{q}_x),\\\nonumber
			&p^Ru_x^R-pu_x=\frac{R(v^R\tilde{\theta}-\theta^R\varphi)u^R_x}{vv^R}-\frac{R\theta}{v}\psi_x,\nonumber
		\end{align*}
		we obtain the bound 
		\begin{align}\label{4.64}
			\int_{0}^{t}\int(\tilde{\theta}_t)^2\mathrm{d}x\mathrm{d}t&\leq C	\int_{0}^{t}\int\left(\frac{v}{\kappa\theta}(\tilde{q}^2+2q^R\tilde{q}+(q^R)^2)\right)^2+\left(\frac{v}{\mu}(\tilde{S}^2+2S^R\tilde{S}+(S^R)^2)\right)^2\mathrm{d}x\mathrm{d}t\nonumber\\
			&\quad+	\int_{0}^{t}\int\left(-\frac{\kappa\theta^R_{xx}v^R+\kappa\theta^R_xv^R_x}{(v^R)^2}\right)^2+\frac{(\tilde{q}^2+2\tilde{q}q^R+(q^R)^2)^2\theta^2_t}{\theta^4}\mathrm{d}x\mathrm{d}t\nonumber\\
			&\quad+	\int_{0}^{t}\int\frac{(\tilde{q}^2+(q^R)^2)((\theta^R_x)^2+\tilde{\theta}^2)}{\theta^2}+(p^Ru^R_x-pu_x)^2+\tilde{q}_x^2\mathrm{d}x\mathrm{d}t\nonumber\\
			&\leq CE^\frac{1}{2}(t)\int_{0}^{t}\mathcal{D}(t)\mathrm{d}t+C\delta_{R}\int_{0}^{t}\mathcal{D}(t)\mathrm{d}t+C(\delta_R^{3-\frac{2}{q}}+\delta_R).
		\end{align}
		Next, employing the Cauchy inequality, equation \ref{4.64} and Lemma \ref{lem4.2}, we have
		\begin{align*}
			\int_{0}^{t}\int-\tau_1\tilde{q}_t\tilde{\theta}_x\mathrm{d}x\mathrm{d}t&=-\int\int_{0}^{t}\frac{d}{\mathrm{d}t}(\tau_1\tilde{q}\tilde{\theta}_x)\mathrm{d}t\mathrm{d}x+\int_{0}^{t}\int\tau_{1}\tilde{q}\tilde{\theta}_{tx}\mathrm{d}x\mathrm{d}t\nonumber\\
			&\geq -\frac{1}{2}\int\tilde{\theta}_x^2|_{0}^{t}\mathrm{d}x-\frac{1}{2}\int(\tau_{1}\tilde{q})^2|_{0}^{t}\mathrm{d}x-\int_{0}^{t}\int(\tau_1\tilde{q}_x)^2\mathrm{d}x\mathrm{d}t-\int_{0}^{t}\int(\tilde{\theta}_t)^2\mathrm{d}x\mathrm{d}t\nonumber\\
			&\geq C\int\tilde{q}^2|_{0}^{t}\mathrm{d}x
			-CE^\frac{1}{2}(t)\int_{0}^{t}\mathcal{D}(t)\mathrm{d}t-C\delta_{R}\int_{0}^{t}\mathcal{D}(t)\mathrm{d}t
			-C(\delta_R^{3-\frac{2}{q}}+\delta_R).
		\end{align*}
		For the last terms, we have
		\begin{equation}
			\begin{aligned}\label{4.72}
				\int_{0}^{t}\int(v^Rq^R-vq)\tilde{\theta}_x\mathrm{d}x\mathrm{d}t&\leq\epsilon\int_{0}^{t}\int\tilde{\theta}_x^2\mathrm{d}x\mathrm{d}t+C\int_{0}^{t}\int\left((v^R)^2\tilde{q}^2+\varphi^2q^2\right)\mathrm{d}x\mathrm{d}t\\
				&\leq\epsilon\int_{0}^{t}\int\tilde{\theta}_x^2\mathrm{d}x\mathrm{d}t+C\delta_{R}\int_{0}^{t}\mathcal{D}(t)\mathrm{d}t.
			\end{aligned}
		\end{equation}
		Thus, by combining the above estimates and Lemma \ref{lem4.1}, we obtain
		\begin{equation}
			\begin{aligned}\label{4.63}
				\int_{0}^{t}\int\tilde{\theta}_x^2\mathrm{d}x\mathrm{d}t	&\leq C\left(E_0+E^{\frac{1}{2}}(t)\int_{0}^{t}\mathcal{D}(t)\mathrm{d}t+\delta_{R}\int_{0}^{t}\mathcal{D}(t)\mathrm{d}t+\delta_{R}^{\frac{3}{2}-\frac{1}{q}}+\delta_{R}^\frac{1}{2}\right).
			\end{aligned}
		\end{equation}
		
		Multiplying $(\ref{3.48})_{2}$ by $\varphi_{x}$ and integrating the results over $[0,t]\times\mathbb{R}$, we can obtain
		\begin{equation}
			\begin{aligned}\label{4.50}
				\int_{0}^{t}\int\frac{R\theta}{v^2}\varphi_x^2\mathrm{d}x\mathrm{d}t
				=\int_{0}^{t}\int\bigg(\psi_t\varphi_x+\frac{R}{v}(\tilde{\theta}_x+\theta^R_x)\varphi_x-\frac{R\theta}{v^2}v^R_x\varphi_x-\frac{R\theta^R_x}{v^R}\varphi_x\\
				+\frac{R\theta^Rv^R_x}{(v^R)^2}\varphi_x- \tilde{S}_x\varphi_x+Q_{1}^{R}\varphi_x\bigg)\mathrm{d}x\mathrm{d}t.
			\end{aligned}
		\end{equation}
		Next, we estimate each term on the right-hand side of equation \eqref{4.50}. First, using $\epsilon$-Young inequality and equation $\eqref{3.48}_1$, we can obtain
		\begin{equation}
			\begin{aligned}
				\int_{0}^{t}\int\psi_t\varphi_x\mathrm{d}x\mathrm{d}t&=\int_{0}^{t}\int\frac{d}{\mathrm{d}t}(\psi\varphi_x)\mathrm{d}x\mathrm{d}t-\int_{0}^{t}\int\psi\varphi_{xt}\mathrm{d}x\mathrm{d}t\\
				&\leq\int(C(\epsilon)\psi^2+\epsilon\varphi_x^2)|_{0}^{t}\mathrm{d}x+\int_{0}^{t}\int\psi^2_x\mathrm{d}x\mathrm{d}t\\
			\end{aligned}
		\end{equation}
		For the other terms in \eqref{4.50}, we  can similarly obtain
		\begin{equation*}
			\begin{aligned}
				\int_{0}^{t}\int\frac{R}{v}\tilde{\theta}_x\varphi_x\mathrm{d}x\mathrm{d}t\leq \int_{0}^{t}\int\epsilon\varphi_x^2\mathrm{d}x\mathrm{d}t+C(\epsilon)\int_{0}^{t}\int\frac{R^2}{v^2}\tilde{\theta}^2_x\mathrm{d}x\mathrm{d}t,
			\end{aligned}
		\end{equation*}
		\begin{align*}
			\int_{0}^{t}\int\left(\frac{R}{v}\theta^R_x\varphi_x-\frac{R\theta^R_x}{v^R}\varphi_x\right)\mathrm{d}x\mathrm{d}t&=\int_{0}^{t}\int\frac{-R\theta^R_x\varphi_x\varphi}{vv^R}\mathrm{d}x\mathrm{d}t\\
			&\geq-\int_{0}^{t}\int\epsilon\left(\frac{-R\varphi_x}{vv^R}\right)^2\mathrm{d}x\mathrm{d}t-C\int_{0}^{t}\int(\varphi\theta^R_x)^2\mathrm{d}x\mathrm{d}t\\
			&\geq-\int_{0}^{t}\int\epsilon\left(\frac{-R\varphi_x}{vv^R}\right)^2\mathrm{d}x\mathrm{d}t-C\delta^2_R\int_{0}^{t}\mathcal{D}(t)\mathrm{d}t,
		\end{align*}
		\begin{align*}
			\int_{0}^{t}\int\left(-\frac{R\theta}{v^2}v^R_x\varphi_x+\frac{R\theta^Rv^R_x}{(v^R)^2}\varphi_x\right)\mathrm{d}x\mathrm{d}t&=\int_{0}^{t}\int\frac{R\varphi_xv^R_x(\tilde{\theta}(v^R)^2-2\theta^Rv^R\varphi-\theta^R\varphi^2)}{v^2(v^R)^2}\mathrm{d}x\mathrm{d}t\\
			&\leq C\int_{0}^{t}\int(|v^R_x|+|\theta^R|+|v^R|)(|\tilde{\theta}|^2+|\varphi_x|^2+|\varphi|^2)\mathrm{d}x\mathrm{d}t\\
			&\leq C\delta_R\int_{0}^{t}\mathcal{D}(t)\mathrm{d}t,
		\end{align*}
		and
		\begin{align*}
			\int_{0}^{t}\int|-\tilde{S}_x\varphi_x|\mathrm{d}x\mathrm{d}t\leq \int_{0}^{t}\int\epsilon\varphi_x^2\mathrm{d}x\mathrm{d}t+C(\epsilon) \int_{0}^{t}\int\tilde{S}_x^2\mathrm{d}x\mathrm{d}t.
		\end{align*}
		For the last term on the right-hand side of \eqref{4.50}, we get
		\begin{align*}
			\int_{0}^{t}\int| Q_1^R\varphi_x|\mathrm{d}x\mathrm{d}t&= \int_{0}^{t}\int|-\left(\frac{\mu u^R_x}{v^R}\right)_x\varphi_x|\mathrm{d}x\mathrm{d}t\\
			&\leq  \int_{0}^{t}\int\epsilon\varphi_x^2\mathrm{d}x\mathrm{d}t+C(\epsilon) \int_{0}^{t}\int\left(\frac{\mu u^R_x}{v^R}\right)_x^2\mathrm{d}x\mathrm{d}t\\
			&\leq \int_{0}^{t}\int\epsilon\varphi_x^2\mathrm{d}x\mathrm{d}t+C(\delta_R^{3-\frac{2}{q}}+\delta_R).
		\end{align*}
		Thus, by combining the above estimates and using \eqref{4.63}, Lemmas \ref{lem4.1}, \ref{lem4.2}, we obtain
		\begin{align}\label{4.58}
			\int_{0}^{t}\int\varphi_x^2\mathrm{d}x\mathrm{d}t	&\leq C\left(E_0+E^{\frac{1}{2}}(t)\int_{0}^{t}\mathcal{D}(t)\mathrm{d}t+\delta_{R}\int_{0}^{t}\mathcal{D}(t)\mathrm{d}t+\delta_{R}^{\frac{3}{2}-\frac{1}{q}}+\delta_{R}^\frac{1}{2}\right)+\int_{0}^{t}\int\psi^2_x\mathrm{d}x\mathrm{d}t.
		\end{align}
		
		Multiplying $(\ref{3.48})_{5}$ by $\psi_{x}$ and integrating the results over $[0,t]\times\mathbb{R}$ yields
		\begin{equation}
			\begin{aligned}\label{68}
				\int_{0}^{t}\int\mu\psi_x^2\mathrm{d}x\mathrm{d}t= \int_{0}^{t}\int\left(-	\tau_2S^R_t\psi_{x}-\tau_2\tilde{S}_t\psi_{x} - vS\psi_{x}+v^{R}S^{R}\psi_{x}\right)\mathrm{d}x\mathrm{d}t.
			\end{aligned}
		\end{equation}
		We now estimate each term on the right-hand side of equation \eqref{68}. 	First, using the expression
		$$S^R_t=-R\mu\left(\frac{\theta_{xx}^R}{(v^R)^2}-\frac{2\theta^R_xv^R_x}{(v^R)^3}-\frac{\theta^Rv^R_{xx}}{(v^R)^3}+\frac{2\theta^R(v^R_x)^2}{(v^R)^4}\right)-\frac{\mu(u^R_x)^2}{(v^R)^2},$$
		we obtain
		\begin{equation}
			\begin{aligned}
				\int_{0}^{t}\int|-\tau_2S^R_t\psi_{x}|\mathrm{d}x\mathrm{d}t&\leq\int_{0}^{t}\int\epsilon(-\tau_2\psi_x)^2\mathrm{d}x\mathrm{d}t+C(\epsilon)\int_{0}^{t}\int(S^R_t)^2\mathrm{d}x\mathrm{d}t\\
				&\leq \epsilon	\int_{0}^{t}\int\tau_{2}\psi_{x}^2\mathrm{d}x\mathrm{d}t+C(\delta_R^{3-\frac{2}{q}}+\delta_R).
			\end{aligned}
		\end{equation}
		Second, from equation $(\ref{3.48})_{2}$, we have
		\begin{align*}
			\psi_t&=\tilde{S}_x-Q^R_1-\frac{R}{v}(\tilde{\theta}_x+\theta^R_x)+\frac{R\theta}{v^2}(\varphi_x+v^R_x)+\frac{R\theta^R_x}{v^R}-\frac{R\theta^Rv^R_x}{(v^R)^2}\\
			&=\tilde{S}_x-Q^R_1-\frac{R}{v}\tilde{\theta}_x+\frac{R\theta^R_x\varphi}{vv^R}+\frac{R\theta}{v^2}\varphi_x+\frac{Rv^R_x(\tilde{\theta}(v^R)^2)-\theta^R\varphi(2v^R+\varphi)}{v^2(v^R)^2},
		\end{align*}
		and consequently
		\begin{equation*}
			\begin{aligned}
				\int_{0}^{t}\int\psi_{t}^2\mathrm{d}x\mathrm{d}t\leq C\int_{0}^{t}\int\left(\tilde{\theta}^2_x+\varphi_{x}^2\right)\mathrm{d}x\mathrm{d}t+CE^\frac{1}{2}(t)\int_{0}^{t}\mathcal{D}(t)\mathrm{d}t+C\delta_R	\int_{0}^{t}\mathcal{D}(t)\mathrm{d}t+C(\delta_R^{3-\frac{2}{q}}+\delta_R).
			\end{aligned}
		\end{equation*}
		Hence,
		\begin{align}
			\int_{0}^{t}\int-\tau_2\tilde{S}_t\psi_{x}\mathrm{d}x\mathrm{d}t&=	-\int\int_{0}^{t}\frac{d}{\mathrm{d}t}(\tau_2\tilde{S}\psi_{x})\mathrm{d}t\mathrm{d}x+\int\int_{0}^{t}\tau_2\tilde{S}\psi_{tx}\mathrm{d}x\mathrm{d}t \nonumber\\
			&\geq-\int\tau_2(\tilde{S}^2+\epsilon(\psi_{x})^2)|_{0}^{t}\mathrm{d}x-	C(\epsilon)\int_{0}^{t}\int\tau_2\tilde{S}_x^2\mathrm{d}x\mathrm{d}t-\epsilon	\int_{0}^{t}\int\psi_t^2\mathrm{d}x\mathrm{d}t\nonumber\\
			&\geq -\int\tau_2(\tilde{S}^2+\epsilon(\psi_{x})^2)|_{0}^{t}\mathrm{d}x -C\delta_{R}\int_{0}^{t}\mathcal{D}(t)\mathrm{d}t-C\epsilon\int_{0}^{t}\int\left(\tilde{\theta}^2_x+\varphi_{x}^2\right)\mathrm{d}x\mathrm{d}t\nonumber\\
			&\quad-C(\delta_R^{3-\frac{2}{q}}+\delta_R).
		\end{align}
		Next, we have
		\begin{equation}
			\begin{aligned}
				\int_{0}^{t}\int|(v^RS^R-vS)\psi_x|\mathrm{d}x\mathrm{d}t&=\int_{0}^{t}\int|(v^R\tilde{S}+\varphi S)\psi_x|\mathrm{d}x\mathrm{d}t\\
				&\leq \epsilon\int_{0}^{t}\int\psi_x^2\mathrm{d}x\mathrm{d}t+C(\epsilon)E^{\frac{1}{2}}(t)\int_{0}^{t}\mathcal{D}(t)\mathrm{d}t.
			\end{aligned}
		\end{equation}
		Thus, by combining the above estimates and using \eqref{4.63}, Lemma \ref{lem4.2}, we obtain
		\begin{align}\label{4.68}
			\int_{0}^{t}\int\psi_x^2\mathrm{d}x\mathrm{d}t\leq 
			C\left(E_0+E^{\frac{1}{2}}(t)\int_{0}^{t}\mathcal{D}(t)\mathrm{d}t+\delta_{R}\int_{0}^{t}\mathcal{D}(t)\mathrm{d}t+\delta_{R}^{\frac{3}{2}-\frac{1}{q}}+\delta_{R}^\frac{1}{6}\right)
			+\epsilon\int_{0}^{t}\int\varphi_x^2\mathrm{d}x\mathrm{d}t.
		\end{align}
		In summary, combining \eqref{4.58}, \eqref{4.63} and \eqref{4.68}, we obtain
		\begin{align*}
			\int_{0}^{t}\int\left(\varphi_{x}^2+\tilde{\theta}_{x}^2+\psi_{x}^2\right)\mathrm{d}x\mathrm{d}t&\leq C\left(E_0+E^\frac{1}{2}(t)\int_{0}^{t}\mathcal{D}(t)\mathrm{d}t+\delta_R\int_{0}^{t}\mathcal{D}(t)\mathrm{d}t+\delta_R^{\frac{1}{2}}+\delta_R^{\frac{3}{2}-\frac{1}{q}}\right).
		\end{align*}
		This completes the proof of the Lemma.
	\end{proof}
	
	Next, we establish the estimates for the second-order dissipative terms.
	\begin{lemma}\label{lem4.5}
		There exists a constant $C$ such that for any $0\le t\le T$, we have
		\begin{align}
			\int_{0}^{t}\int\left(\varphi_{xx}^2+\tilde{\theta}_{xx}^2+\psi_{xx}^2\right)\mathrm{d}x\mathrm{d}t&\leq C\left(E_0+E^\frac{1}{2}(t)\int_{0}^{t}\mathcal{D}(t)\mathrm{d}t+\delta_R\int_{0}^{t}\mathcal{D}(t)\mathrm{d}t+\delta_R^{\frac{1}{2}}+\delta_R^{\frac{3}{2}-\frac{1}{q}}\right).
		\end{align}
	\end{lemma}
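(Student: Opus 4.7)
The plan is to mimic, at one higher derivative level, the strategy used in the preceding lemma for $\int_0^t\!\int(\varphi_x^2+\tilde\theta_x^2+\psi_x^2)$. Concretely, I start from the once-differentiated system \eqref{3.51} and test three of its equations against, respectively, $\tilde\theta_{xx}$, $\varphi_{xx}$ and $\psi_{xx}$ in order to extract each of the three target dissipations, then recycle the lower-order bounds proved in Lemmas \ref{lem4.1}--\ref{lem4.3} to absorb the remainders.

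\textbf{Step 1 (dissipation of $\tilde\theta_{xx}$).} Multiply $(\ref{3.51})_4$ by $\tilde\theta_{xx}$ and integrate in $x$ and $t$. The $\kappa\tilde\theta_{xxx}$ term pairs with $\tilde\theta_{xx}$ to give, after one spatial integration by parts, the dissipation $\int_0^t\!\int \kappa\tilde\theta_{xx}^2$. The remaining right-hand side contributions are of three kinds: (i) source terms $\tau_1q^R_{tx}\tilde\theta_{xx}$, which are purely wave quantities and by Lemma \ref{lem2.1} produce at worst $C(\delta_R^{3/2-1/q}+\delta_R^{1/2})$; (ii) the nonlinear products coming from $(v\tilde q+q^R\varphi)_x\tilde\theta_{xx}$, which after Cauchy--Schwarz become $\|(\varphi_x,\tilde q_x,v^R_x,q^R_x)\|_{L^\infty}$ times $\|(\varphi,\varphi_x,\tilde q,\tilde q_x,\tilde\theta_{xx})\|_{L^2}^2$, hence bounded by $E^{1/2}(t)\int_0^t\mathcal D(t)\,\mathrm dt+\delta_R\int_0^t\mathcal D(t)\,\mathrm dt$; and (iii) the time-derivative term $\tau_1\tilde q_{tx}\tilde\theta_{xx}$, which I integrate by parts in $t$, producing the boundary term $\int\tau_1\tilde q_x\tilde\theta_{xx}|_0^t$ (controlled by $E(t)+E_0$ and Young) and an interior term $\int_0^t\!\int\tau_1\tilde q_x\tilde\theta_{xxt}$. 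To handle $\tilde\theta_{xxt}$ I use the temperature equation $(\ref{3.48})_3$, differentiated in $x$, to express $\tilde\theta_{xxt}$ in terms of $\tilde q_{xx}$ and lower-order quantities already controlled by $\mathcal D(t)$ via Lemma \ref{lem4.3}; this yields the bound $\epsilon\int_0^t\|\tilde q_{xx}\|_{L^2}^2+(\mathrm{Lemma~\ref{lem4.3}~estimate})$.

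\textbf{Step 2 (dissipation of $\varphi_{xx}$).} Differentiate $(\ref{3.48})_2$ once in $x$ and test the resulting equation against $\varphi_{xx}$. The principal term $\frac{R\theta}{v^2}(\varphi_{xx}+v^R_{xx})\cdot\varphi_{xx}$ gives, after subtracting the wave part, the dissipation $\int_0^t\!\int\frac{R\theta}{v^2}\varphi_{xx}^2$. The time-derivative piece $\psi_{tx}\varphi_{xx}$ is converted via $\int\psi_x\varphi_{xx}=\frac{\mathrm d}{\mathrm dt}\int\psi_x\varphi_x-\int\psi_x\varphi_{xt}$ and then $\varphi_{xt}=\psi_{xx}$ from $(\ref{3.51})_1$, reducing to a boundary term $\le E_0+E(t)$ and an interior term bounded by $\epsilon\int_0^t\!\int\psi_{xx}^2+\epsilon\int_0^t\!\int\varphi_{xx}^2$. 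All remaining terms (the $\tilde\theta_{xx}$-, $\tilde S_{xx}$-, and $Q_1^R$-pieces) are handled by Cauchy--Schwarz and Step 1 together with the previous estimates of Lemmas \ref{lem4.1}--\ref{lem4.3}, picking up at most $\delta_R\int_0^t\mathcal D(t)\,\mathrm dt+\delta_R^{3/2-1/q}+\delta_R^{1/2}$.

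\textbf{Step 3 (dissipation of $\psi_{xx}$).} Test $(\ref{3.51})_5$ against $\psi_{xx}$ to obtain the dissipation $\int_0^t\!\int\mu\psi_{xx}^2$ from the $-\mu\psi_{xx}\cdot\psi_{xx}$ contribution. The awkward $\tilde S_{tx}\psi_{xx}$ term is integrated by parts in $t$, and to control the resulting interior integrand I rewrite $\psi_{xxt}$ using the $x$-derivative of $(\ref{3.48})_2$, replacing it by $\tilde S_{xx}$ and spatial second derivatives of $\tilde\theta$, $\varphi$ already controlled by $\int_0^t\mathcal D(t)\,\mathrm dt$. The source $\tau_2 S^R_{tx}$ yields a purely wave contribution $\le C(\delta_R^{3/2-1/q}+\delta_R^{1/2})$ via Lemma \ref{lem2.1}, and the remaining $(v\tilde S+S^R\varphi)_x$-piece produces only quadratic terms in the controlled quantities.

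\textbf{Closing the estimate.} Adding the three inequalities and choosing $\epsilon$ small enough to absorb the $\epsilon\int_0^t\!\int(\varphi_{xx}^2+\tilde\theta_{xx}^2+\psi_{xx}^2)$ terms into the left-hand side yields the claimed bound. The main technical obstacle is \textbf{Step 1(iii)}: the time-integration by parts on $\tilde q_{tx}\tilde\theta_{xx}$ generates $\tilde\theta_{xxt}$, and one must be careful to express this through the parabolic-type temperature equation so as to keep the result within the functional framework $E_0+E^{1/2}(t)\!\int_0^t\mathcal D(t)\,\mathrm dt+\delta_R\!\int_0^t\mathcal D(t)\,\mathrm dt$ rather than losing a derivative. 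An analogous issue arises for $\tilde S_{tx}\psi_{xx}$ in Step 3, and both are treated by the same equation-based substitution, which is why the rarefaction source exponent $\delta_R^{3/2-1/q}$ appears as the binding term.
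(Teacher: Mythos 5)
Your overall route coincides with the paper's: test $(\ref{3.51})_4$ against $\tilde\theta_{xx}$, $(\ref{3.51})_2$ against $\varphi_{xx}$, and $(\ref{3.51})_5$ against $\psi_{xx}$, move the relaxation time-derivatives $\tilde q_{tx}$, $\tilde S_{tx}$ by integration by parts in $t$, and close by absorbing the cross terms (note the paper's $\varphi_{xx}$-estimate carries an $O(1)$ multiple of $\int_0^t\!\int\psi_{xx}^2$, not the "$\epsilon\int\psi_{xx}^2$" you claim in Step 2; the closing still works because the $\psi_{xx}$-estimate feeds back only $\epsilon\int_0^t\!\int\varphi_{xx}^2$). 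One small inaccuracy first: $(\ref{3.51})_4$ contains $\kappa\tilde\theta_{xx}$, not $\kappa\tilde\theta_{xxx}$, so the dissipation $\int_0^t\!\int\kappa\tilde\theta_{xx}^2$ appears directly with no spatial integration by parts; as you describe it, $\int\tilde\theta_{xxx}\tilde\theta_{xx}\,\mathrm{d}x=0$ and would yield no dissipation at all.

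The genuine gap is in the step you yourself flag as the main obstacle. After integrating $\tau_1\tilde q_{tx}\tilde\theta_{xx}$ by parts in $t$, the interior term is $\int_0^t\!\int\tilde q_x\tilde\theta_{xxt}$. The temperature equation $(\ref{3.48})_3$ differentiated once in $x$ expresses $\tilde\theta_{tx}$, not $\tilde\theta_{xxt}$; to express $\tilde\theta_{xxt}$ you must differentiate twice, which produces $\tilde q_{xxx}$ (and third-order derivatives of $\tilde\theta$ and the wave), i.e. precisely the loss of a derivative you say must be avoided — such terms are controlled neither by $E(t)$ nor by $\mathcal D(t)$. The missing move, which the paper makes, is one further integration by parts in $x$: $\int\tilde q_x\tilde\theta_{xxt}\,\mathrm{d}x=-\int\tilde q_{xx}\tilde\theta_{tx}\,\mathrm{d}x$, after which $\tilde\theta_{tx}$ is bounded in $L^2_{t,x}$ via $(\ref{3.51})_3$ together with Lemma \ref{lem4.3}, and $\tilde q_{xx}$ belongs to $\mathcal D$. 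The same correction is needed in your Step 3: you cannot substitute for $\psi_{xxt}$ using the once-differentiated momentum equation; you must first pass to $-\int\tilde S_{xx}\psi_{tx}$ and then use the expression for $\psi_{tx}$ from $(\ref{3.51})_2$, whose $L^2_{t,x}$ norm is controlled by $\int_0^t\!\int(\tilde\theta_{xx}^2+\varphi_{xx}^2)$ plus admissible terms. With these corrections your sketch reproduces the paper's argument.
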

	\begin{proof}
		Multiplying $(\ref{3.51})_{4}$ by $\tilde{\theta}_{xx}$ and integrating the results over $[0,t]\times\mathbb{R}$, we can obtain
		\begin{equation}
			\begin{aligned}\label{4.100}
				\int_{0}^{t}\int\kappa\tilde{\theta}_{xx}^2\mathrm{d}x\mathrm{d}t= \int_{0}^{t}\int\left(-	\tau_1q^R_{tx}\tilde{\theta}_{xx}-\tau_1\tilde{q}_{tx}\tilde{\theta}_{xx} - (vq+v^{R}q^{R})_x\tilde{\theta}_{xx}\right)\mathrm{d}x\mathrm{d}t,
			\end{aligned}
		\end{equation}
		We now estimate each term appearing on the right-hand side of equation \eqref{4.100}. First, by using $\epsilon$-young inequality and $(\ref{3.9})$, we know
		\begin{align}
			\int_{0}^{t}\int|-\tau_1q^R_{tx}\tilde{\theta}_{xx}|\mathrm{d}x\mathrm{d}t&\leq\int_{0}^{t}\int\epsilon(-\tau_1\tilde{\theta}_{xx})^2\mathrm{d}x\mathrm{d}t+C(\epsilon)\int_{0}^{t}\int(q^R_{tx})^2\mathrm{d}x\mathrm{d}t\nonumber\\
			&\leq\int_{0}^{t}\int\epsilon(-\tau_1\tilde{\theta}_{xx})^2\mathrm{d}x\mathrm{d}t+C\int_{0}^{t}\int |\theta^R_{xx}u^R_{x}+\theta^R_{x}u^R_{xx}+\theta^R_{x}u^R_{x}v^R_{x}\nonumber\\
			&\quad+\theta^Ru^R_{xxx}+\theta^Ru^R_{xx}v^R_{x}+\theta^Ru^R_{x}v^R_{xx}+\theta^Ru^R_{x}(v^R_{x})^2|^2\mathrm{d}x\mathrm{d}t\nonumber\\
			&\leq\int_{0}^{t}\int\epsilon(-\tau_1\tilde{\theta}_{xx})^2\mathrm{d}x\mathrm{d}t+C(\delta_R^{5-\frac{2}{q}}+\delta_R).
		\end{align}
		Second, by invoking equation $\eqref{3.51}_3$, we have
		\begin{align*}
			\tilde{\theta}_{tx}&=\left[\frac{\tau_{1}(\tilde{q}+q^R)^2}{2\kappa(\tilde{\theta}+\theta^R)^2}\tilde{\theta}_{t}\right]_{x}+\left[\frac{(\tilde{\theta}+\theta^R)_{x}(\tilde{q}+q^R)}{\tilde{\theta}+\theta^R}\right]_{x}-
			\left(-\frac{R\theta}{v^2}(\varphi_x+v^R_x)+\frac{R}{v}(\tilde{\theta}_x+\theta^R_x)\right)\psi_{x}\nonumber\\
			&+\frac{R\theta}{v}\psi_{xx}+(p-p^R)_{x}u^R_x+(p-p^R)u^R_{xx}+\tilde{q}_{xx}
			+\left[\frac{(\tilde{q}+q^R )^2v}{\kappa(\tilde{\theta}+\theta^R)}\right]_{x}\nonumber\\
			&+\left[\frac{v}{\mu}(\tilde{S}^2+2S^R\tilde{S}+(S^R)^2)\right]_{x}-(Q^R_{2})_{x}+\left[\frac{\tau_{1}(\tilde{q}+q^R)^2\theta^R_{t}}{2\kappa(\tilde{\theta}+\theta^R)^2}\right]_{x},
		\end{align*}
		which, together with Lemma \ref{lem4.3}, yields
		\begin{equation}
			\begin{aligned}
				\int_{0}^{t}\int\tilde{\theta}^2_{tx}\mathrm{d}x\mathrm{d}t\leq C\left(E(0)+E^{\frac{1}{2}}(t)\int_{0}^{t}\mathcal{D}(t)\mathrm{d}t+\delta_{R}\int_{0}^{t}\mathcal{D}(t)\mathrm{d}t+\delta_R^{3-\frac{2}{q}}+\delta_R\right).
			\end{aligned}
		\end{equation}
		Thus, we can derive
		\begin{align*}
			\int_{0}^{t}\int|-\tau_1\tilde{q}_{tx}\tilde{\theta}_{xx}|\mathrm{d}x\mathrm{d}t&\leq \int\int_{0}^{t}\frac{\mathrm{d}}{\mathrm{d}t}(\tau_1\tilde{q}_x\tilde{\theta}_{xx})\mathrm{d}x\mathrm{d}t-\int\int_{0}^{t}\tau_1\tilde{q}_x\tilde{\theta}_{xxt}\mathrm{d}x\mathrm{d}t\\
			&\leq\int\tau_1(C\tilde{q}_x^2+\epsilon\theta_{xx}^2)|_0^t\mathrm{d}x+C\int_{0}^{t}\int\tilde{q}_{xx}^2\mathrm{d}x\mathrm{d}t+C\int_{0}^{t}\int\tilde{\theta}_{tx}^2\mathrm{d}x\mathrm{d}t\\
			&\leq CE^{\frac{1}{2}}(t)\int_{0}^{t}\mathcal{D}(t)\mathrm{d}t+C\delta_{R}\int_{0}^{t}\mathcal{D}(t)\mathrm{d}t+C(\delta_R^{3-\frac{2}{q}}+\delta_R).
		\end{align*}
		We now proceed the last term on the right-hand side of \eqref{4.100}. Similarly with \eqref{4.72}, we have
		\begin{equation}
			\begin{aligned}
				&\int_{0}^{t}\int|- (vq+v^{R}q^{R})_x\tilde{\theta}_{xx}|\mathrm{d}x\mathrm{d}t\\
				&\leq \int_{0}^{t}\int\epsilon\tilde{\theta}_{xx}^2\mathrm{d}x\mathrm{d}t
				+C(\epsilon)\int_{0}^{t}\int (v_x^2\tilde{q}^2+v^2\tilde{q}_x^2+\varphi_x^2(q^R)^2+\varphi^2(q^R_x)^2)\mathrm{d}x\mathrm{d}t\\
				&\leq\int_{0}^{t}\int\epsilon\tilde{\theta}_{xx}^2\mathrm{d}x\mathrm{d}t+CE^\frac{1}{2}(t)\int_{0}^{t}\mathcal{D}(t)\mathrm{d}t
				+C\delta_{R}\int_{0}^{t}\mathcal{D}(t)\mathrm{d}t+C(\delta_R^{3-\frac{2}{q}}+\delta_R).
			\end{aligned}
		\end{equation}
		Thus, by combining the above estimates, one can obtain
		\begin{equation}
			\begin{aligned}\label{96}
				\int_{0}^{t}\int\tilde{\theta}_{xx}^2\mathrm{d}x\mathrm{d}t&\leq C\left(E_0+E^\frac{1}{2}\int_{0}^{t}\mathcal{D}(t)\mathrm{d}t+\delta_R\int_{0}^{t}\mathcal{D}(t)\mathrm{d}t+\delta_R^{\frac{1}{2}}+\delta_R^{\frac{3}{2}-\frac{1}{q}}\right).
			\end{aligned}
		\end{equation}
		
		Multiplying $(\ref{3.51})_{2}$ by $\varphi_{xx}$ and integrating the results over $[0,t]\times\mathbb{R}$, we can obtain
		\begin{equation}
			\begin{aligned}\label{4.90}
				\int_{0}^{t}\int\frac{R\theta}{v^2}\varphi^2_{xx}\mathrm{d}x\mathrm{d}t&=\int_{0}^{t}\int	\bigg(\psi_{tx}\varphi_{xx}-\left(\frac{R\theta_x}{v^2}-\frac{2R\theta v_x}{v^3}\right)(\varphi_x+v^R_x)\varphi_{xx}-\frac{R\theta}{v^2}v^R_{xx}\varphi_{xx}\\
				&\quad-\frac{Rv_x}{v^2}(\tilde{\theta}_x+\theta^R_x)\varphi_{xx}+\frac{R}{v}(\tilde{\theta}_{xx}+\theta^R_{xx})\varphi_{xx}+\frac{2R\theta_x^Rv_x^R+R\theta^Rv^R_{xx}}{(v^R)^2}\varphi_{xx}\\
				&\quad-\frac{2R\theta^R(v_x^R)^2}{(v^R)^3}\varphi_{xx}-\frac{R\theta_{xx}^R}{v^R}\varphi_{xx}
				-\tilde{S}_{xx}\varphi_{xx}+(Q^R_{1})_{x}\varphi_{xx}\bigg)\mathrm{d}x\mathrm{d}t.
			\end{aligned}
		\end{equation}
		Next, we estimate each term on the right-hand side of equation \eqref{4.90}. First, using $\epsilon$-young inequality, we have
		\begin{align}
			\int_{0}^{t}\int\psi_{tx}\varphi_{xx}\mathrm{d}x\mathrm{d}t&=\int\int_{0}^{t}\frac{d}{\mathrm{d}t}(\psi_{x}\varphi_{xx})\mathrm{d}x\mathrm{d}t+\int_{0}^{t}\int\psi_{xx}^2\mathrm{d}x\mathrm{d}t\nonumber\\
			&\leq\int(C(\epsilon)\psi_{x}^2+\epsilon\varphi_{xx}^2)|_{0}^{t}\mathrm{d}x+\int_{0}^{t}\int\psi_{xx}^2\mathrm{d}x\mathrm{d}t,
		\end{align}
		and
		\begin{equation}
			\begin{aligned}
				&\int_{0}^{t}\int\bigg|-\left(\frac{R\theta_x}{v^2}-\frac{2R\theta v_x}{v^3}\right)(\varphi_x+v^R_x)\varphi_{xx}\bigg|\mathrm{d}x\mathrm{d}t\\
				&\leq	\int_{0}^{t}\int\epsilon\left(\frac{R}{v^2}\varphi_{xx}\right)^2\mathrm{d}x\mathrm{d}t+C(\epsilon)\int_{0}^{t}\int\left(\theta_x-\frac{2\theta v_x}{v}\right)^2(\varphi_x+v^R_x)^2\mathrm{d}x\mathrm{d}t\\
				&\leq\int_{0}^{t}\int\epsilon\left(\frac{R}{v^2}\varphi_{xx}\right)^2\mathrm{d}x\mathrm{d}t+CE^\frac{1}{2}(t)\int_{0}^{t}\mathcal{D}(t)\mathrm{d}t.
			\end{aligned}
		\end{equation}
		Furthermore, by using $\epsilon$-young inequality and Lemma \ref{lem2.1}, one can obtain
		\begin{equation}
			\begin{aligned}
				\int_{0}^{t}\int|-\frac{R\theta}{v^2}v^R_{xx}\varphi_{xx}|\mathrm{d}x\mathrm{d}t&\leq\int_{0}^{t}\int\epsilon\left(\frac{R\theta}{v^2}\varphi_{xx}\right)^2\mathrm{d}x\mathrm{d}t+C(\epsilon)\int_{0}^{t}\int(v^R_{xx})^2\mathrm{d}x\mathrm{d}t\\
				&\leq\int_{0}^{t}\int\epsilon\left(\frac{R\theta}{v^2}\varphi_{xx}\right)^2\mathrm{d}x\mathrm{d}t+C(\delta_R^{3-\frac{2}{q}}+\delta_R).
			\end{aligned}
		\end{equation}
		And similarly, we have
		\begin{align*}
			&\int_{0}^{t}\int|-\frac{Rv_x}{v^2}(\tilde{\theta}_x+\theta^R_x)\varphi_{xx}|\mathrm{d}x\mathrm{d}t\\
			&\leq\int_{0}^{t}\int\epsilon\left(-\frac{R}{v^2}\varphi_{xx}\right)^2\mathrm{d}x\mathrm{d}t+CE^\frac{1}{2}(t)\int_{0}^{t}\mathcal{D}(t)\mathrm{d}t+C\delta_R\int_{0}^{t}\mathcal{D}(t)\mathrm{d}t+\delta_R,
		\end{align*}
		\begin{align*}
			\int_{0}^{t}\int\frac{R}{v}\tilde{\theta}_{xx}\varphi_{xx}\mathrm{d}x\mathrm{d}t\leq\epsilon\int_{0}^{t}\int\left(\frac{R}{v}\varphi_{xx}\right)^2\mathrm{d}x\mathrm{d}t+C(\epsilon)\int_{0}^{t}\int\tilde{\theta}_{xx}^2\mathrm{d}x\mathrm{d}t,
		\end{align*}
		
		\begin{align*}
			\int_{0}^{t}\int\frac{R}{v}\theta^R_{xx}\varphi_{xx}\mathrm{d}x\mathrm{d}t\leq\epsilon\int_{0}^{t}\int\left(\frac{R}{v}\varphi_{xx}\right)^2\mathrm{d}x\mathrm{d}t+C(\delta_R^{3-\frac{2}{q}}+\delta_R),
		\end{align*}
		and
		\begin{align*}
			\int_{0}^{t}\int\frac{2R\theta_x^Rv_x^R+R\theta^Rv^R_{xx}}{(v^R)^2}\varphi_{xx}\mathrm{d}x\mathrm{d}t&\leq C\int_{0}^{t}\int\epsilon\left(\frac{R\varphi_{xx}}{(v^R)^2}\right)^2\mathrm{d}x\mathrm{d}t+\int_{0}^{t}\int(2\theta_x^Rv^R_x+\theta^Rv^R_{xx})^2\mathrm{d}x\mathrm{d}t\nonumber\\
			&\leq C\int_{0}^{t}\int\epsilon\left(\frac{R\varphi_{xx}}{(v^R)^2}\right)^2\mathrm{d}x\mathrm{d}t+C(\delta_R^{3-\frac{2}{q}}+\delta_R).
		\end{align*}
		For the last term, we have
		\begin{equation}
			\begin{aligned}
				&\int_{0}^{t}\int|-\frac{2R\theta^R(v_x^R)^2}{(v^R)^3}\varphi_{xx}-\frac{R\theta_{xx}^R}{v^R}\varphi_{xx}|\mathrm{d}x\mathrm{d}t\\
				&\leq \int_{0}^{t}\int\epsilon\left(\frac{R}{v^R}\varphi_{xx}\right)^2\mathrm{d}x\mathrm{d}t+\int_{0}^{t}\int C(\epsilon)\left(\frac{2\theta^R(v^R_x)^2}{(v^R)^2}+\theta^R_{xx}\right)^2\mathrm{d}x\mathrm{d}t\\
				&\leq \int_{0}^{t}\int\epsilon\left(\frac{R}{v^R}\varphi_{xx}\right)^2\mathrm{d}x\mathrm{d}t+C(\delta_R^{3-\frac{2}{q}}+\delta_R).
			\end{aligned}
		\end{equation}
		Thus, by combining the above estimates and using \eqref{96}, one can obtain
		\begin{align}\label{106}
			\int_{0}^{t}\int\varphi_{xx}^2\mathrm{d}x\mathrm{d}t&\leq C\left(E_0+E^\frac{1}{2}(t)\int_{0}^{t}\mathcal{D}(t)\mathrm{d}t+\delta_R\int_{0}^{t}\mathcal{D}(t)\mathrm{d}t+\delta_R^{\frac{1}{2}}+\delta_R^{\frac{3}{2}-\frac{1}{q}}\right)+\int_{0}^{t}\int\psi_{xx}^2\mathrm{d}x\mathrm{d}t.
		\end{align}

		Multiplying $(\ref{3.51})_{5}$ by $\psi_{xx}$ and integrating the results over $[0,t]\times\mathbb{R}$, we obtain
		\begin{equation}
			\begin{aligned}\label{4.107}
				\int_{0}^{t}\int\mu\psi_{xx}^2\mathrm{d}x\mathrm{d}t=	\int_{0}^{t}\int\tau_{2}\tilde{S}_{tx}\psi_{xx}+(v\tilde{S}+S^R\varphi)_{x}\psi_{xx}+\tau_{2}S^R_{tx}\psi_{xx}\mathrm{d}x\mathrm{d}t.
			\end{aligned}
		\end{equation}
		Next, we estimate each term on the right-hand side of equation \eqref{4.107}. First, we note that
		\begin{align*}
			\psi_{tx}&=\left(\frac{R\theta_x}{v^2}-\frac{2R\theta v_x}{v^3}\right)(\varphi_x+v^R_x)+\frac{R\theta}{v^2}(\varphi_{xx}+v^R_{xx})+\frac{Rv_x}{v^2}(\tilde{\theta}_x+\theta^R_x)-\frac{R}{v}(\tilde{\theta}_{xx}+\theta^R_{xx})\\
			&\quad-\frac{2R\theta_x^Rv_x^R+R\theta^Rv^R_{xx}}{(v^R)^2}+\frac{2R\theta^R(v_x^R)^2}{(v^R)^3}+\frac{R\theta_{xx}^R}{v^R}
			+\tilde{S}_{xx}-(Q^R_{1})_{x},
		\end{align*}
		and
		\begin{equation}
			\begin{aligned}
				\int_{0}^{t}\int\psi_{tx}^2\mathrm{d}x\mathrm{d}t&\leq \int_{0}^{t}\int\left(\tilde{\theta}_{xx}^2+\varphi_{xx}^2\right)\mathrm{d}x\mathrm{d}t+C\left(E^\frac{1}{2}(t)\int_{0}^{t}\mathcal{D}(t)\mathrm{d}t
				+\delta_{R}\int_{0}^{t}\mathcal{D}(t)\mathrm{d}t+\delta_R^{3-\frac{2}{q}}+\delta_R\right).
			\end{aligned}
		\end{equation}
		Then, applying the $\epsilon$-Young inequality, estimate \eqref{96} and Lemmas \ref{lem4.2}-\ref{lem4.3}, we derive
		\begin{align}
			\int_{0}^{t}\int\tau_{2}\tilde{S}_{tx}\psi_{xx}\mathrm{d}x\mathrm{d}t&=\int\int_{0}^{t}\frac{d}{\mathrm{d}t}(\tau_2\tilde{S}_x\psi_{xx})\mathrm{d}x\mathrm{d}t-\int_{0}^{t}\int\tau_2\tilde{S}\psi_{txx}\mathrm{d}x\mathrm{d}t \nonumber\\
			&\leq\int\tau_2(C\tilde{S}_x^2+\epsilon\psi_{xx}^2)|_{0}^{t}\mathrm{d}x+C(\epsilon)\int_{0}^{t}\int\tilde{S}_{xx}^2\mathrm{d}x\mathrm{d}t+\epsilon\int_{0}^{t}\int\psi_{tx}^2\mathrm{d}x\mathrm{d}t \nonumber\\
			&\leq \epsilon\int_{0}^{t}\int\varphi_{xx}^2\mathrm{d}x\mathrm{d}t
			+C\left(E_0+E^\frac{1}{2}\int_{0}^{t}\mathcal{D}(t)\mathrm{d}t+\delta_R\int_{0}^{t}\mathcal{D}(t)\mathrm{d}t+\delta_R^{\frac{1}{2}}+\delta_R^{\frac{3}{2}-\frac{1}{q}}\right).
		\end{align}
		The remaining terms in \eqref{4.107} can be estimated in a similar way. We get
		\begin{align*}
			\int_{0}^{t}\int\tau_{2}S^R_{tx}\psi_{xx}\mathrm{d}x\mathrm{d}t&\leq\epsilon\int_{0}^{t}\int\tau_{2}\psi_{xx}^2\mathrm{d}x\mathrm{d}t+C\int_{0}^{t}\int (S^R_{tx})^2\mathrm{d}x\mathrm{d}t\\
			&\leq\epsilon\int_{0}^{t}\int\tau_{2}\psi_{xx}^2\mathrm{d}x\mathrm{d}t+C(\delta_R^{3-\frac{2}{q}}+\delta_R),
		\end{align*}
		and
		\begin{align*}
			\int_{0}^{t}\int(v\tilde{S}+S^R\varphi)_{x}\psi_{xx}\mathrm{d}x\mathrm{d}t\leq\int_{0}^{t}\int\epsilon\psi^2_{xx}\mathrm{d}x\mathrm{d}t+C\left(E^\frac{1}{2}(t)\int_{0}^{t}\mathcal{D}(t)\mathrm{d}t
			+\delta_{R}\int_{0}^{t}\mathcal{D}(t)\mathrm{d}t+\delta_R^{3-\frac{2}{q}}+\delta_R\right).
		\end{align*}
		Thus, by combining the above estimates and using \eqref{96}, we finally obtain
		\begin{align}\label{113}
			\int_{0}^{t}\int\psi_{xx}^2\mathrm{d}x\mathrm{d}t&\leq C\left(E_0+E^\frac{1}{2}(t)\int_{0}^{t}\mathcal{D}(t)\mathrm{d}t+\delta_R\int_{0}^{t}\mathcal{D}(t)\mathrm{d}t+\delta_R^{\frac{1}{2}}+\delta_R^{\frac{3}{2}-\frac{1}{q}}\right)
			+\epsilon\int_{0}^{t}\int\varphi_{xx}^2\mathrm{d}x\mathrm{d}t
		\end{align}
		In summary, combining \eqref{96}, \eqref{106} and \eqref{113}, we derive
		\begin{align*}
			\int_{0}^{t}\int\left(\varphi_{xx}^2+\tilde{\theta}_{xx}^2+\psi_{xx}^2\right)\mathrm{d}x\mathrm{d}t&\leq C\left(E_0+E^\frac{1}{2}(t)\int_{0}^{t}\mathcal{D}(t)\mathrm{d}t+C\delta_R\int_{0}^{t}\mathcal{D}(t)\mathrm{d}t+\delta_R^{\frac{1}{2}}+\delta_R^{\frac{3}{2}-\frac{1}{q}}\right).
		\end{align*}
		The proof of this Lemma is finished.
	\end{proof}

	Combining the results of Lemmas \ref{lem4.1}–\ref{lem4.5}, we complete the proof of Proposition \ref{prop3.2}.

\end{document}